\tikzset{cross/.style={cross out, draw=black, minimum size=2*(#1-\pgflinewidth), inner sep=0pt, outer sep=0pt},
cross/.default={1pt}}
\newcommand{\ZZ}{\mathbb{Z}}
\newcommand{\KK}{\mathbb{K}}
\newcommand{\RR}{\mathbb{R}}
\newcommand{\PP}{\mathbb{P}}
\newcommand{\NN}{\mathbb{N}}
\newtheorem{mainthm}{Main Theorem}
\newtheorem{thm}{Theorem}[section]
\newtheorem{prop}[thm]{Proposition}
\newtheorem{lemma}[thm]{Lemma}
\newtheorem{cor}[thm]{Corollary}
\theoremstyle{definition}
\declaretheorem[name=Example,qed={\lower-0.3ex\hbox{$\triangleleft$}},sibling=thm]{ex}
\newtheorem{example}[thm]{Example}
\newtheorem{conj}[thm]{Conjecture}
\newtheorem{step}{Step}
\newtheorem{case}{Case}
\theoremstyle{definition}
\newtheorem{defn}[thm]{Definition}
\newtheorem{rem}[thm]{Remark}
\title{Classifying Fano Complexity-One $T$-Varieties via Divisorial Polytopes}
\author{Nathan Ilten, Marni Mishna, and Charlotte Trainor}
\address{Department of Mathematics, Simon Fraser University,
8888 University Drive, Burnaby BC V5A1S6, Canada}
\email{nilten@sfu.ca, mmishna@sfu.ca, ctrainor@sfu.ca}
\begin{document}

\begin{abstract}
The correspondence between Gorenstein Fano toric varieties and reflexive polytopes has been generalized by Ilten and S\"u\ss{} to a correspondence between Gorenstein Fano complexity-one $T$-varieties and Fano divisorial polytopes. Motivated by the finiteness of reflexive polytopes in fixed dimension, we show that over a fixed base polytope, there are only finitely many Fano divisorial polytopes, up to equivalence. We classify two-dimensional Fano divisorial polytopes, recovering Huggenberger's classification of Gorenstein del Pezzo $\KK^*$-surfaces. Furthermore, we show that any three-dimensional Fano divisorial polytope is equivalent to one involving only eight functions.
\end{abstract}
\maketitle

\section{Introduction}
It is well-known that Fano toric varieties with at worst Gorenstein
singularities correspond to so-called reflexive polytopes, see
e.g. \cite[\S8.3]{CLS}. Furthermore, in any fixed dimension, there are
only a finite number of reflexive polytopes up to equivalence
\cite{lagarias:91a}, and there is an algorithm for classifying them
\cite{kreuzer:97a}. It follows that there are only finitely many Gorenstein Fano
 toric varieties in a fixed dimension, and they may be
classified algorithmically.

A natural generalization of toric varieties are \emph{complexity-one $T$-varieties}: normal varieties $X$ equipped with the effective action of an algebraic torus $T$ whose generic orbit has codimension one in $X$. Any toric variety $X$ may be considered as a complexity-one $T$-variety by restricting the action of the big torus on $X$ to a codimension-one subtorus $T$.

In a manner similar to the case of toric varieties, these varieties can also be
encoded using quasi-combinatorial data, specifically, a generalization
of polytopes. Throughout, suppose~$M$ is a lattice. 

\begin{defn}
\label{CDP}
A \emph{combinatorial divisorial polytope} (CDP) with respect to the
lattice $M$ consists of a full-dimensional lattice polytope
$\Box\subset M\otimes \RR$, along with an $n$-tuple $\Psi=(\Psi_1,\ldots,\Psi_n)$ (for some $n\in \NN$)
 of piecewise-affine concave functions
$\Psi_i:\Box\to\RR$ such that
	\begin{enumerate}
		\item For each $i$, the graph of $\Psi_i$ is a polyhedral complex with integral vertices;
		\item For each $u\in \Box^\circ$, $\sum \Psi_i(u) >0$.
	\end{enumerate}
	We call $\Box$ the \emph{base} of $\Psi$.
	The \emph{dimension} of $\Psi$ is $\dim \Box+1$.
\end{defn}
\noindent Attaching each of the functions $\Psi_i$ to a point $P_i$ in the curve $\PP^1$ gives rise to a \emph{divisorial polytope}  on $\PP^1$; these correspond to rational polarized complexity-one $T$-varieties \cite{polarized}.

Just as there is a special subclass of lattice polytopes corresponding
to Gorenstein Fano toric varieties, there is a special subclass of
CDPs corresponding to Fano complexity-one $T$-varieties with at worst
canonical Gorenstein singularities \cite[Definition 3.3 and Theorem
3.5]{stability}. We call such CDPs \emph{Fano}, and recall the details
in Definition~\ref{Fano}.

Considering the finiteness result for reflexive polytopes, one might
ask if a similar result holds for Fano CDPs. To pose this question we
define a natural notion of \emph{equivalence} of CDPs in
\S~\ref{sec:equiv}. We say that a CDP is \emph{toric} if it is
equivalent to a CDP consisting of at most two functions
$\Psi_1,\Psi_2$. Our conjecture for Fano CDPs is the following:

\begin{conj}\label{conj:1}
In any fixed dimension $d$, there are only finitely many equivalence classes of non-toric Fano CDPs.
\end{conj}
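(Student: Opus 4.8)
The plan is to split the statement into two finiteness problems and to isolate the genuinely hard one. Fix the dimension $d$; by Definition~\ref{CDP} the base $\Box$ of any CDP in play is then a full-dimensional lattice polytope in a space of dimension $d-1$. Granting the fixed-base finiteness theorem proved elsewhere in the paper, Conjecture~\ref{conj:1} is \emph{equivalent} to the following assertion: up to lattice equivalence, only finitely many polytopes $\Box$ arise as the base of a non-toric Fano CDP of dimension $d$. Indeed, if only finitely many bases occur, then finiteness over each of them yields the conjecture; conversely, finitely many CDPs up to equivalence have finitely many bases, since the base is an equivalence invariant up to unimodular transformation. So the entire content reduces to bounding the set of admissible bases.

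To bound the bases I would extract numerical invariants of $\Box$ from the Fano condition of Definition~\ref{Fano} and then appeal to a finiteness theorem for lattice polytopes in fixed dimension. The positivity requirement $\sum_i \Psi_i(u)>0$ on $\Box^\circ$ together with the integrality of the graphs of the $\Psi_i$ should force $\Box$, viewed inside the Gorenstein anticanonical data of the associated $T$-variety $X$, to contain a bounded, nonzero number of interior lattice points, with correspondingly bounded normalized volume. Once such bounds are in hand, the classification of lattice polytopes with a fixed number of interior lattice points in fixed dimension (Lagarias--Ziegler), which also underlies the finiteness of reflexive polytopes in the toric case, produces only finitely many $\Box$ up to equivalence. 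The natural way to obtain the bounds is to realize $\Box$ as a face or fibre of an honest $d$-dimensional reflexive polytope attached to $X$, using that the reflexivity-type condition places facets at lattice distance one from a distinguished center and thereby caps both the heights of the $\Psi_i$ and the extent of $\Box$; part of the work is checking that the Fano condition indeed supplies an interior lattice point rather than admitting the width-one families that would spoil finiteness in dimension $d\ge 3$.

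The principal obstacle is controlling the number $n$ of functions uniformly in $d$, which is intertwined with bounding the base. A priori one may attach arbitrarily many $\Psi_i$; equivalence discards the trivial ones, but nothing formal caps the count, and indeed the paper only establishes the bound $n\le 8$ in dimension three. I would try to combine concavity with Gorenstein integrality: each nontrivial $\Psi_i$ must add a positive integral quantity to $\sum_i\Psi_i$ somewhere on $\Box$, while the Fano condition bounds this sum from above, so that boundedly many $\Psi_i$ can be nontrivial. The difficulty is that a larger base can accommodate more functions and more functions can enlarge the admissible base, so the bound on $n$ and the volume bound on $\Box$ must be produced \emph{simultaneously}, each feeding the other; decoupling them in general dimension is exactly what remains open beyond the cases treated in the paper. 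Finally, the non-toric hypothesis isolates the new phenomena, since two-function CDPs correspond to toric varieties whose finiteness already follows from the reflexive-polytope count and whose equivalence relation behaves specially when $n\le 2$, so that regime is best handled separately.
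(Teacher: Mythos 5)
The statement you are asked to prove is Conjecture~\ref{conj:1}, which the paper does not prove; it is left open. Your reduction is exactly the one the paper records after Main Theorem~\ref{thm:2}: granting Theorem~\ref{fixedBase} (finiteness over a fixed base), the conjecture is equivalent to showing that only finitely many base polytopes $\Box$ occur, up to lattice equivalence, for non-toric Fano CDPs of dimension $d$ --- equivalently, to bounding the volume or the number of interior lattice points of the admissible bases and invoking Lagarias--Ziegler. That part of your write-up is correct and faithful to the paper's own framing.

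However, what you submit is not a proof, and the gap is precisely the open problem. You never actually establish a bound on the volume (or interior lattice point count) of $\Box$: the proposed mechanism --- realizing $\Box$ inside an honest $d$-dimensional reflexive polytope attached to $X$ and letting the height-one facet condition cap its extent --- is only gestured at, and the height-one condition in Definition~\ref{Fano} constrains the graphs $\Gamma(\Psi_i+a_i+1)$, not the facets of $\Box$ itself (indeed, Definition~\ref{Fano}(4) explicitly allows facets of $\Box$ not at height one, subject only to $\sum\Psi_i\equiv 0$ there). Your suggestion that ``each nontrivial $\Psi_i$ must add a positive integral quantity to $\sum_i\Psi_i$ somewhere'' is also not justified: non-integral functions contribute fractional amounts, and the paper's own bound on $n$ (Theorem~\ref{bound}) depends on $\Box$ through $c(\Box)$, which is unbounded as $\Box$ degenerates. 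You candidly acknowledge that decoupling the bound on $n$ from the bound on $\Box$ ``is exactly what remains open,'' and that admission is accurate --- but it means the argument terminates at the same point the paper does, with the conjecture unproved. To be a proof, you would need to supply the simultaneous bound; as written, this is a correct restatement of the problem, not a solution to it.
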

\noindent Geometrically, this conjecture is equivalent to stating that
there are only finitely many families of canonical Gorenstein Fano
complexity-one $T$-varieties in any given dimension.
It should be noted that any Fano toric variety can be considered as a complexity-one $T$-variety in infinitely many ways, which is why we exclude the toric case from the above conjecture.

\subsection{Main results}
Our first main result is the following:
\begin{mainthm}[Theorem \ref{fixedBase}]\label{thm:2}
Over any fixed base $\Box$, there are at most finitely many equivalence classes of Fano CDPs.
\end{mainthm}
\noindent While we do not present it as such, this result can be made
effective. The proof of Main
Theorem~\ref{thm:2} suggests an algorithm to enumerate all equivalence
classes of Fano CDPs over a fixed base.

Given Main Theorem~\ref{thm:2}, to prove Conjecture~\ref{conj:1} it is
sufficient to prove that only finitely many base
polytopes $\Box$ occur for non-toric Fano CDPs in any given
dimension. Equivalently, a bound on the volume of the bases which
occur (or the number of interior lattice points \cite{lagarias:91a}) would also prove the conjecture.

We then apply the tools we develop in the proof of Main Theorem \ref{thm:2} to analyze low-dimensional cases:
\begin{mainthm}[Theorem~\ref{thm:2d}]\label{thm:3}
There are exactly 34 equivalence classes of two-dimensional non-toric Fano CDPs.
\end{mainthm}

\begin{mainthm}[Theorem~\ref{thm:3dbound}]\label{thm:4}
Any three-dimensional Fano CDP is equivalent to one with at most eight functions.
\end{mainthm}
\noindent Our analysis in these cases leads us to conjecture (Conjecture \ref{conj:bound}) that any $d$-dimensional Fano CDP is equivalent to one with at most $2^d$ functions.

\subsection{Geometric interpretations of the main results}
Our results have an interesting geometric interpretation.
Given a  Fano complexity-one $T$-variety $X$ with at worst canonical Gorenstein singularities, let $\pi:\widetilde{X}\to \PP^1$ be the resolution of the rational quotient map $X\dashrightarrow \PP^1$.  

\begin{cor}\label{cor:1}
		For a fixed polarized toric variety $Y$, there are finitely many families of canonical Gorenstein Fano complexity-one $T$-varieties $X$ such that the general fiber of $\pi:\widetilde{X}\to \PP^1$ is isomorphic to $Y$, polarized with regards to the pullback of $-K_X$. 
\end{cor}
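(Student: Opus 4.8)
The plan is to deduce the corollary directly from Main Theorem~\ref{thm:2} by translating the geometric hypothesis on the general fiber into a constraint on the base polytope. First I would invoke the correspondence of Ilten and S\"u\ss{} \cite{polarized}, which identifies rational polarized complexity-one $T$-varieties with divisorial polytopes on $\PP^1$; combining this with the Fano criterion \cite{stability}, equivalence classes of Fano CDPs (with the functions $\Psi_i$ attached to points $P_i\in\PP^1$) correspond precisely to families of canonical Gorenstein Fano complexity-one $T$-varieties $X$, polarized by $-K_X$.

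The crux is to identify the base $\Box$ of the associated divisorial polytope with the polytope of the polarized general fiber $Y$. Since $X$ is a complexity-one $T$-variety, the general fiber of the rational quotient $X\dashrightarrow\PP^1$ is a toric variety of dimension $\dim\Box$ carrying the residual $T$-action, and this general fiber is unchanged upon passing to the resolution $\pi:\widetilde{X}\to\PP^1$. I would then check, by unwinding the definitions in \cite{polarized}, that the polarization induced on this fiber by the pullback of $-K_X$ has moment polytope exactly the base $\Box$. Thus prescribing the polarized toric variety $Y$ is the same as fixing $\Box$ up to lattice isomorphism.

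With this identification in hand the conclusion is immediate: by Main Theorem~\ref{thm:2} there are only finitely many equivalence classes of Fano CDPs over the fixed base $\Box$, and each such equivalence class is exactly one family of the $T$-varieties in question. I expect the main obstacle to be the middle step, namely verifying that the base of the divisorial polytope coincides with the moment polytope of the polarized general fiber and, in particular, that the polarization coming from $-K_X$ agrees with the one implicitly used to build the base. This is a compatibility check between the anticanonical (Fano) structure recorded by the CDP and the operation of restricting to a general fiber, rather than a genuinely new finiteness estimate.
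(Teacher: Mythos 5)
Your proposal is correct and follows exactly the route the paper intends: the paper states that Corollary~\ref{cor:1} is a direct translation of Main Theorem~\ref{thm:2}, and your argument supplies precisely that translation — identifying the polarized general fiber of $\pi$ with the toric variety of the base polytope $\Box$ via the correspondence of \cite{polarized}, so that fixing $Y$ fixes $\Box$ and Theorem~\ref{fixedBase} gives finiteness. The compatibility check you flag (that the base of the divisorial polytope is the moment polytope of the general fiber under the anticanonical polarization) is indeed the only content beyond the theorem itself, and it holds as you expect.
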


\begin{cor}\label{cor:2}
There are exactly 34 families of non-toric Gorenstein del Pezzo $\KK^*$-surfaces $X$.
\end{cor}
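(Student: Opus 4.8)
The plan is to read the corollary off from Main Theorem~\ref{thm:3} (Theorem~\ref{thm:2d}) through the dictionary between surfaces and divisorial polytopes, so the only real work is in matching up the geometric and combinatorial notions. First I would identify the relevant class of surfaces combinatorially. A $\KK^*$-surface is a normal surface carrying an effective action of the one-dimensional torus $T=\KK^*$; since $\dim T=1$ and the generic orbit is one-dimensional, such a surface is exactly a complexity-one $T$-variety of dimension two. Requiring it to be a Gorenstein del Pezzo surface, i.e.\ a Fano surface with at worst canonical Gorenstein singularities polarized by $-K_X$, amounts to asking that it be a Fano complexity-one $T$-variety with at worst canonical Gorenstein singularities. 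Since del Pezzo surfaces are rational, the rational quotient is $\PP^1$, so these surfaces are precisely the polarized varieties attached to Fano divisorial polytopes on $\PP^1$.

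Next I would invoke the correspondence itself. By \cite{polarized}, rational polarized complexity-one $T$-varieties correspond to divisorial polytopes on $\PP^1$, and by \cite[Definition 3.3 and Theorem 3.5]{stability} the Fano ones with canonical Gorenstein singularities correspond exactly to Fano CDPs attached to points of $\PP^1$, with the polarization $L=-K_X$ recovered. The two-dimensional surfaces therefore match two-dimensional Fano CDPs. The key translation step is that an equivalence class of CDPs corresponds to a single \emph{family} of such surfaces: a CDP records the functions $\Psi_i$ but not the points $P_i\in\PP^1$ to which they are attached, and varying these points through configurations of distinct points (modulo the symmetries built into our notion of equivalence in \S\ref{sec:equiv}) sweeps out exactly one family of surfaces. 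I would also verify that the variety is toric if and only if its CDP is toric in our sense, so that the non-toric surfaces correspond precisely to the non-toric CDPs. With this dictionary in place the count is immediate: by Theorem~\ref{thm:2d} there are exactly $34$ equivalence classes of two-dimensional non-toric Fano CDPs, hence exactly $34$ families of non-toric Gorenstein del Pezzo $\KK^*$-surfaces.

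The main obstacle I anticipate is not the enumeration, which is supplied by Theorem~\ref{thm:2d}, but making the word ``family'' precise and confirming that equivalence of CDPs matches the correct geometric equivalence of surfaces, in both directions. The forward direction assigns to each CDP class the family obtained by attaching the fixed functions to varying point configurations on $\PP^1$; for injectivity I would need the CDP to be recoverable from the surface together with its polarization and $\KK^*$-action, which is exactly the bijectivity (up to equivalence) in the Ilten--S\"u\ss{} correspondence. Thus the compatibility of our equivalence relation with the $\mathrm{PGL}_2$-action on $\PP^1$ and with re-indexing the marked points is precisely what guarantees that ``$34$ equivalence classes'' and ``$34$ families'' genuinely coincide. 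Matching the resulting list against Huggenberger's classification would then furnish an independent check of the count.
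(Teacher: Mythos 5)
Your proposal is correct and follows the same route as the paper, which simply states that Corollary~\ref{cor:2} is a direct translation of Main Theorem~\ref{thm:3} (Theorem~\ref{thm:2d}) via the Ilten--S\"u\ss{} correspondence between Fano CDPs and canonical Gorenstein Fano complexity-one $T$-varieties. You in fact spell out the dictionary (equivalence classes of CDPs versus families of surfaces obtained by varying the points $P_i\in\PP^1$) more carefully than the paper does.
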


\begin{cor}\label{cor:3}
	For any canonical Gorenstein Fano complexity-one threefold, the resolved quotient map $\pi$ has at most $8$ non-integral fibers.
\end{cor}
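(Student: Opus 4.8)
The plan is to show that, up to equivalence, every three-dimensional Fano CDP has at most eight \emph{non-affine} functions, and that the affine summands can be absorbed; since a three-dimensional CDP has a two-dimensional base $\Box\subset M\otimes\RR$, this reduction is exactly the content of the theorem. I would organize the argument around the degree function $\Phi:=\sum_i\Psi_i$ and a local analysis along $\partial\Box$, using the tools developed for Main Theorem~\ref{thm:2}.

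First I would normalize using the equivalence of \S\ref{sec:equiv}. An affine $\Psi_i$ records no genuine creasing and corresponds geometrically to an integral fiber of $\pi$. Using the principal-divisor moves (adding affine functions $\ell_i$ with $\sum_i\ell_i=0$) one can transfer the affine part of any summand into a single remaining function without leaving the equivalence class or disturbing the conditions of Definition~\ref{Fano}. After this step it suffices to bound the number $n$ of genuinely piecewise-affine functions.

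Next I would extract from each non-affine $\Psi_i$ a combinatorial datum that must be paid for out of the Fano budget. Concavity forces the graph of $\Psi_i$ to have a nonempty crease locus; recording the slopes (in the dual lattice $M^\vee$) on its maximal regions of linearity, each non-affine function contributes at least one slope jump across $\Box$. The Fano condition rigidly controls $\Phi$: it is strictly positive on $\Box^\circ$, while its behaviour along $\partial\Box$ and at the vertices is pinned down by the Gorenstein integrality and reflexivity requirements. The idea is that these constraints cap the number of simultaneous slope jumps, because every non-affine summand must leave a trace on the boundary data that $\Phi$ is permitted to carry.

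The main obstacle is obtaining the sharp, base-independent constant $8=2^3$. Because Main Theorem~\ref{thm:2} bounds Fano CDPs only over a \emph{fixed} base, and no bound on $\Box$ itself is available, the count cannot proceed by enumerating bases and must instead be purely local. I would therefore analyze, at each vertex $v$ of the polygon $\Box$, the restrictions of the $\Psi_i$ to the two edges meeting at $v$: the Gorenstein condition forces the values $\Psi_i(v)$ into a lattice and the edge slopes to be integral, so the non-affine functions detectable at $v$ are limited by a reflexive-cone count in the local normal fan. Aggregating these local contributions over $\partial\Box$, and checking that each non-affine function is detected without double counting, should yield the bound. The delicate bookkeeping that turns the local estimates into exactly $2^d=8$, rather than a cruder estimate, calibrated against the explicit two-dimensional classification of Main Theorem~\ref{thm:3}, is where I expect the real work to lie.
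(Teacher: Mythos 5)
There is a genuine gap, in fact two. First, your opening reduction is incorrect: you claim that every affine $\Psi_i$ ``records no genuine creasing and corresponds geometrically to an integral fiber'' and can be absorbed by equivalences. But the shearing and translation moves only involve \emph{integral} data ($\beta_i\in\ZZ$, $v\in M^*$, $\alpha_i\in\ZZ$), so only linear functions with \emph{integral} slope can be eliminated. A linear function with non-integral slope (e.g.\ $\Psi_i'(x)=(x+1)/4$, which occurs in the classification tables) is non-integral in the sense of the paper, corresponds to a non-integral (multiple) fiber of $\pi$, and must be counted toward the bound of $8$. This is exactly why the paper's notion of a normalized CDP (Definition~\ref{def:normalized}) only excludes linear functions of integral slope. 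So reducing to ``genuinely piecewise-affine'' summands loses precisely the functions the corollary is about.

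Second, the core counting mechanism is missing. The corollary is a translation of Theorem~\ref{thm:3dbound}, whose proof does not proceed by a local analysis at the vertices of $\partial\Box$. The quantitative engine is Lemma~\ref{lemma:bound}: for a primitive $v$ with $\pm\alpha_v v\in\Box$, combining the lower bound $\sum_i\bigl(\Psi_i'(-\alpha_v v)+\Psi_i'(\alpha_v v)\bigr)\geq 2n-4$ coming from the Fano condition with the concavity/height-one upper bound $\Psi_i'(-\alpha_v v)+\Psi_i'(\alpha_v v)\leq 2-\alpha_v$ for each ``bad'' function yields at most $4/\alpha_v$ functions that are non-integral or non-linear along $\langle v\rangle$; integral functions linear along enough directions are then killed by Lemma~\ref{equiv1} and normalization. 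The bound $8$ then comes from a case analysis on the shape of $\Box$ (cross polytope via Theorem~\ref{bound}; otherwise the basis produced by Lemma~\ref{lemma:orth} together with Lemma~\ref{lemma:lbound} and a finer argument along the $e_2$-axis). Your proposal gestures at ``reflexive-cone counts in the local normal fan'' and explicitly defers ``the real work,'' but gives no reason why vertex-local data would cap the count at $8$; the paper's argument is essentially global (antipodal pairs through the origin), and the two-dimensional classification of Main Theorem~\ref{thm:3} concerns one-dimensional bases and cannot calibrate this. As written, the proposal does not establish the bound.
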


\noindent These corollaries are direct translations of Main Theorems \ref{thm:2}, \ref{thm:3}, and \ref{thm:4}, respectively.

An alternate approach to the classification of Fano complexity-one $T$-varieties is via a classification of their Cox rings. This has been employed by Huggenberger \cite{huggenberger,hausen:13a} to classify Gorenstein (log) del Pezzo surfaces with a $\KK^*$-action; our Main Theorem \ref{thm:3} and Corollary \ref{cor:2} recover this classification in the del Pezzo case. A translation of this classification into the language of divisorial polytopes used here is found in \cite{hendrik}.

This approach via Cox rings has also been employed in \cite{hausen:11a,bechtold:16a,pic2} to classify higher-dimensional Fano complexity-one $T$-varieties of Picard ranks one and two. These techniques should work more generally, as long as one has a bound on the Picard rank of the varieties in question.
Interestingly, while our Main Theorem \ref{thm:4} does not bound the Picard rank of Fano complexity-one threefolds, it does limit the number of equations which may appear in the defining ideal of the Cox ring. We hope that by combining this bound with the Cox ring approach, one may obtain a complete classification of canonical Gorenstein Fano complexity-one threefolds.

\subsection{Outline of the article}
First we provide some background information on CDPs in
\S\ref{sec:FCDPs}, including a description of operations preserving
equivalence, as well as establishing the relationship between lattice
polytopes and toric CDPs. We recall the definition of a Fano CDP
given by Ilten and S\"u\ss{} in \cite{stability}, and use the notion
of equivalence to make some key simplifying assumptions about the
properties of Fano CDPs.  In \S\ref{sec:Bounds} we provide, for fixed base polytope $\Box$, an
upper bound on the number of functions in a Fano CDP. Main Theorem
\ref{thm:2}, stating the finiteness of the number of equivalence
classes of Fano CDPs over any fixed base, is established in
\S\ref{sec:Base}. In \S\ref{sec:surfaces}, we use the tools
developed throughout this paper to provide a classification of all
equivalence classes of two-dimensional Fano CDPs, proving Main Theorem \ref{thm:3}. Finally, in \S\ref{sec:threefolds}, we prove Main Theorem \ref{thm:4}.

\section{Generalizing reflexive polytopes to Fano CDPs}
\label{sec:FCDPs}
In this section, we introduce the notion of equivalence of CDPs, discuss how CDPs can be viewed as a generalization of lattice polytopes, and introduce the Fano property of CDPs.

\subsection{Equivalence of CDPs}\label{sec:equiv}
\begin{table}
\centering
\begin{tabular}{ccccccc}
\begin{tikzpicture}[scale=0.8]
\draw[gray, very thin] (-1,-1) grid (1,2);
\filldraw[gray] (0,0) circle (2pt);
\filldraw[gray] (-1,0) circle (2pt);
\filldraw[gray] (1,0) circle (2pt);
\draw[gray, line width=1] (-1,0) -- (1,0);
\draw[black, line width=1] (-1,0)--(0,1)--(1,0);
\draw[black, line width=1] (-1,0)--(1,1);
\end{tikzpicture}
&
\begin{tikzpicture}[baseline=0ex,scale=0.8]
\draw[->,line width=1] (-0.5,1.5)--(0.5,1.5);
\end{tikzpicture}
& 
\begin{tikzpicture}[scale=0.8]
\draw[gray, very thin] (-1,-1) grid (1,2);
\filldraw[gray] (0,0) circle (2pt);
\filldraw[gray] (-1,0) circle (2pt);
\filldraw[gray] (1,0) circle (2pt);
\draw[gray, line width=1] (-1,0) -- (1,0);
\draw[black, line width=1] (-1,-1)--(0,1)--(1,1);
\draw[black, line width=1] (-1,1)--(1,0);
\end{tikzpicture}
&
\begin{tikzpicture}[baseline=0ex,scale=0.8]
\draw[->,line width=1] (-0.5,1.5)--(0.5,1.5);
\end{tikzpicture}
& 
\begin{tikzpicture}[scale=0.8]
\draw[gray, very thin] (-1,-1) grid (1,2);
\filldraw[gray] (0,0) circle (2pt);
\filldraw[gray] (-1,0) circle (2pt);
\filldraw[gray] (1,0) circle (2pt);
\draw[gray, line width=1] (-1,0) -- (1,0);
\draw[black, line width=1] (-1,0)--(0,2)--(1,2);
\draw[black, line width=1] (-1,0)--(1,-1);
\end{tikzpicture}
&
\begin{tikzpicture}[baseline=0ex,scale=0.8]
\draw[->,line width=1] (-0.5,1.5)--(0.5,1.5);
\end{tikzpicture}
& 
\begin{tikzpicture}[scale=0.8]
\draw[gray, very thin] (-1,-1) grid (1,2);
\filldraw[gray] (0,0) circle (2pt);
\filldraw[gray] (-1,0) circle (2pt);
\filldraw[gray] (1,0) circle (2pt);
\draw[gray, line width=1] (-1,0) -- (1,0);
\draw[black, line width=1] (-1,2)--(0,2)--(1,0);
\draw[black, line width=1] (-1,-1)--(1,0);
\end{tikzpicture} 
\end{tabular}

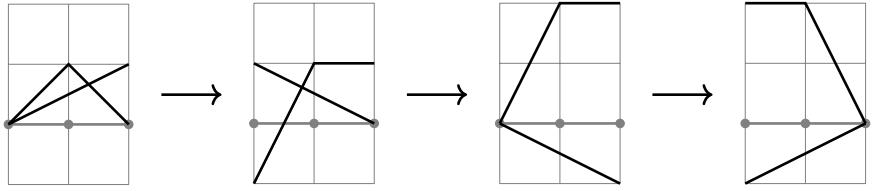
\captionof{figure}{Example of equivalent CDPs. The base polytope of the leftmost CDP is $\Box=[-1,1]$. First we shear one function by a factor of $-1$ and the other by a factor of $1$; next we translate the two functions; finally we transform the base by reflecting through the origin.}
\label{fig:eq}
\end{table}


Let $\Psi$ be a $d$-dimensional CDP with base $\Box$ and functions $\Psi_1,\dots,\Psi_n$. We may perform any combination of the following actions or their inverses to obtain an \emph{equivalent} CDP:
\begin{description}[itemsep=1em]
\item[Addition of the zero function] If $0\equiv \Psi_{n+1}:\Box\to
  \RR$, then $(\Psi_1,\ldots,\Psi_n,\Psi_{n+1})$ is equivalent to
  $\Psi$. 

\item[Permutation/Relabelling] For $\sigma\in S_n$, the CDP with base $\Box$ and
  functions \[(\Psi_{\sigma(1)},\ldots,\Psi_{\sigma(n)})\] is
  equivalent to $\Psi$. 

\item[Transformation of the base] If $\phi$ is an invertible
  affine linear transformation of the lattice $M$, the CDP with base $\phi(\Box)$ and functions $\Psi_i\circ\phi^{-1}$
is equivalent to~$\Psi$. 

\item[Translation] For $\alpha_1,\dots,\alpha_n\in\mathbb{Z}$ with $\sum_{i=1}^n\alpha_i=0$, the CDP with base $\Box$ and functions
$\Psi_i+\alpha_i$
is equivalent to $\Psi$. 

\item[Shearing Action] For $v\in M^*$ and $\beta_1,\dots,\beta_n\in\mathbb{Z}$ with $\sum_{i=1}^n\beta_i=0$, the CDP with base $\Box$ and functions 
\[ u\mapsto \Psi_i(u)+\beta_i\cdot\langle u,v\rangle \quad \text{for all } u\in\Box, \] is equivalent to $\Psi$. 
\end{description}


\noindent In Figure~\ref{fig:eq}, we illustrate the equivalence operations of shearing, translating, and transforming the base.

\begin{rem}
The equivalences are motivated geometrically as follows. Given a CDP~$\Psi$, attaching points $P_i\in \PP^1$ to each function $\Psi_i$ gives rise to a \emph{divisorial polytope} on $\PP^1$; these correspond to rational polarized complexity-one $T$-varieties \cite{polarized}. Any two equivalent CDPs will give rise to complexity-one $T$-varieties which are equivariantly isomorphic, after appropriate choice of the points $P_i$. 
\end{rem}

\subsection{From polytopes to toric CDPs}
Consider a polytope $P$ in $(M\times \ZZ)\otimes \RR$, with vertices in the lattice $M\times \ZZ$. This gives rise to a CDP with two functions as follows.
Let $\pi_1$ be the projection to $M\otimes \RR$ and $\pi_2$ be the projection to $\RR$. Set $\Box=\pi_1(P)$ and define $\Psi_1,\Psi_2:\Box\rightarrow \RR$ by \[ \Psi_1(u)=\max(\pi_2(\pi_1^{-1}(u)\cap P)) \quad \text{ and } \quad \Psi_2(u)=-\min(\pi_2(\pi_1^{-1}(u)\cap P)). \]
Then the base $\Box$ with the functions $\Psi_1$, $\Psi_2$ is a CDP. This process is illustrated with an example in Figure~\ref{corex}. Conversely, this process can be inverted to obtain a lattice polytope from a CDP with two functions by reflecting one of the functions and taking the convex hull of the vertices of the graph of the CDP. 

\begin{figure}
\centering
\hspace*{\fill}
\begin{subfigure}[b]{0.25\columnwidth}
\resizebox{\textwidth}{!}{
\begin{tikzpicture}
\draw[step=1cm,gray,very thin] (-8,-2) grid (-3,2);
\draw[black, line width=2] (-8,0)--(-8,1)--(-5,2)--(-4,1)--(-3,-1)--(-6,-2)--(-8,0);
\draw[black, very thin] (-5,2)--(-5,-2);
\draw[black, very thin] (-8,0)--(-3,0);
\filldraw[black] (-5,0) circle (2pt);
\end{tikzpicture}
}
\subcaption{Polytope $P$}
\label{P}
\end{subfigure}
\hfill
\begin{subfigure}[b]{0.25\columnwidth}
\resizebox{\textwidth}{!}{
\begin{tikzpicture}
\draw[gray, very thin] (-3,-2) grid (2,2);
\filldraw[gray] (-3,0) circle (2pt);
\filldraw[gray] (-2,0) circle (2pt);
\filldraw[gray] (-1,0) circle (2pt);
\filldraw[gray] (1,0) circle (2pt);
\filldraw[gray] (2,0) circle (2pt);
\draw[gray, line width=2] (-3,0)--(2,0);
\filldraw[black] (0,0) circle (2pt);
\draw[line width=2] (-3,1)--(0,2)--(1,1)--(2,-1);
\draw[dotted,line width=2] (-3,0)--(-1,-2)--(2,-1);
\end{tikzpicture}
}
\subcaption{$\Box=\pi_1(P)$}
\label{R}
\end{subfigure}
\hfill
\begin{subfigure}[b]{0.25\columnwidth}
\resizebox{\textwidth}{!}{
\begin{tikzpicture}
\draw[gray, very thin] (-3,-2) grid (2,2);
\filldraw[gray] (-3,0) circle (2pt);
\filldraw[gray] (-2,0) circle (2pt);
\filldraw[gray] (-1,0) circle (2pt);
\filldraw[gray] (1,0) circle (2pt);
\filldraw[gray] (2,0) circle (2pt);
\draw[gray, line width=2] (-3,0)--(2,0);
\filldraw[black] (0,0) circle (2pt);
\draw[line width=2] (-3,1)--(0,2)--(1,1)--(2,-1);
\draw[dotted,line width=2] (-3,0)--(-1,2)--(2,1);
\end{tikzpicture}
}
\subcaption{Resulting CDP}
\label{fctns}
\end{subfigure}
\hspace*{\fill}
\caption{Example of correspondence between polytopes and CDPs with two functions: {\sc(a)} the lattice polytope; {\sc(b)} the lattice polytope projected down one dimension; and {\sc(c)} the resulting CDP.}
\label{corex}
\end{figure}
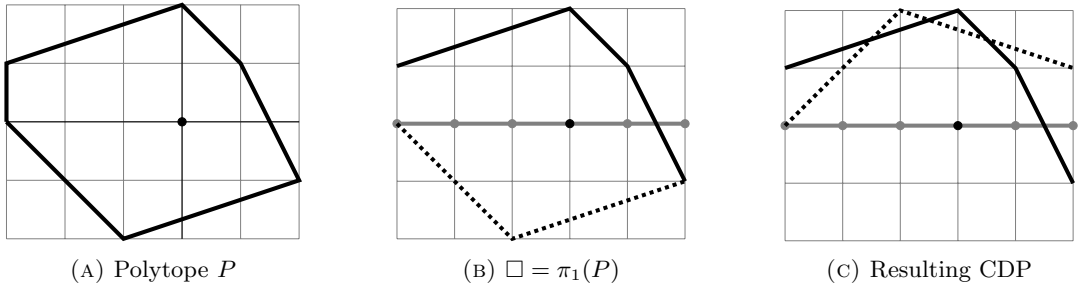

A CDP  $\Psi$ is \emph{toric} if it is equivalent to a CDP with at most two functions. Since we may add a constant zero function to obtain an equivalent CDP, we have that toric CDPs are those equivalent to a CDP with exactly two functions. Thus, toric CDPs are exactly those which arise from a lattice polytope via the above construction.

\begin{rem}
\label{rem:equivcor}
If two toric CDPs  are equivalent, then their corresponding polytopes are equivalent as lattice polytopes.
\end{rem}

The converse of Remark~\ref{rem:equivcor} does not hold; that is, equivalent polytopes do not necessarily correspond to equivalent CDPs. 
Examples of this fact can be found by shearing a polytope in a direction other than the direction that we project along to obtain the polytope's corresponding CDP.  Such an example is given in Figure~\ref{ineqFCDPs}; the CDPs depicted there are inequivalent because their base polytopes are inequivalent.
From the point of view of algebraic geometry, this is saying that a toric variety may be given the structure of a complexity-one $T$-variety by restricting to a codimension-one subtorus in infinitely many different ways.

\begin{table}
\centering
\begin{tabular}{cccccc}
\begin{tikzpicture}[scale=0.8]
\draw[step=1cm,gray,very thin] (-1,-1) grid (1,1);
\filldraw[black] (0,0) circle (2pt);
\draw[black, line width=1] (-1,0)--(0,1)--(1,0)--(1,-1)--(-1,0);
\end{tikzpicture}
&
\begin{tikzpicture}[baseline=0ex, scale=0.8]
\draw[->,color=black, line width=1] (-0.5,1)--(0.5,1);
\end{tikzpicture}
&
\begin{tikzpicture}[scale=0.8]
\draw[step=1cm,gray,very thin] (-1,-1) grid (2,1);
\draw[black, line width=1] (-1,0)--(-1,1)--(1,0)--(2,-1)--(-1,0);
\filldraw[black] (0,0) circle (2pt);
\end{tikzpicture}
&
\begin{tikzpicture}[baseline=0ex, scale=0.8]
\draw[->,color=black, line width=1] (-0.5,1)--(0.5,1);
\end{tikzpicture}
&
\begin{tikzpicture}[scale=0.8]
\draw[step=1cm,gray,very thin] (-2,-1) grid (3,1);
\draw[black, line width=1] (-2,1)--(1,0)--(3,-1);
\draw[black, line width=1] 
(3,-1)--(-1,0)--(-2,1);
\filldraw[black] (0,0) circle (2pt);
\end{tikzpicture}

\\

\begin{tikzpicture}[scale=0.8]
\draw[->, color=black, line width=1] (0,0.5)--(0,-0.5);
\end{tikzpicture}
& { } &
\begin{tikzpicture}[scale=0.8]
\draw[->, color=black, line width=1] (0,0.5)--(0,-0.5);
\end{tikzpicture}
& &
\begin{tikzpicture}[scale=0.8]
\draw[->, color=black, line width=1] (0,0.5)--(0,-0.5);
\end{tikzpicture}

\\

\begin{tikzpicture}[scale=0.8]
\draw[step=1cm,gray,very thin] (-1,-1) grid (1,1);
\draw[white] (0,-1) circle (1pt);
\filldraw[gray] (0,0) circle (2pt);
\filldraw[gray] (-1,0) circle (2pt);
\filldraw[gray] (1,0) circle (2pt);
\draw[gray, line width=1] (-1,0) -- (1,0);
\draw[black, line width=1] (-1,0)--(0,1)--(1,0);
\draw[black, line width=1] (-1,0)--(1,1);
\end{tikzpicture}
& { } &
\begin{tikzpicture}[scale=0.8]
\draw[step=1cm,gray,very thin] (-1,-1) grid (2,1);
\filldraw[gray] (0,0) circle (2pt);
\filldraw[gray] (-1,0) circle (2pt);
\filldraw[gray] (1,0) circle (2pt);
\filldraw[gray] (2,0) circle (2pt);
\draw[gray, line width=1] (-1,0)--(2,0);
\draw[black, line width=1] (-1,1)--(1,0)--(2,-1);
\draw[black, line width=1] (2,1)--(-1,0);
\end{tikzpicture}
& &
\begin{tikzpicture}[scale=0.8]
\draw[step=1cm,gray,very thin] (-2,-1) grid (3,1);
\filldraw[gray] (0,0) circle (2pt);
\filldraw[gray] (-1,0) circle (2pt);
\filldraw[gray] (-2,0) circle (2pt);
\filldraw[gray] (1,0) circle (2pt);
\filldraw[gray] (2,0) circle (2pt);
\filldraw[gray] (3,0) circle (2pt);
\draw[gray, line width=1] (-2,0)--(3,0);
\draw[black, line width=1] (-2,1)--(1,0)--(3,-1);
\draw[black, line width=1]
(3,1)--(-1,0)--(-2,-1);
\end{tikzpicture}

\end{tabular}

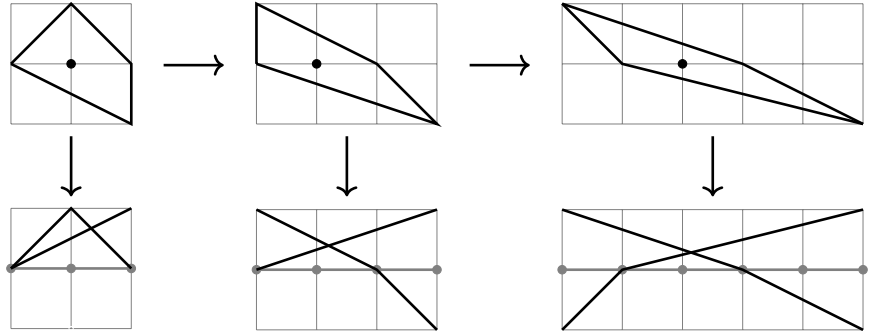
\captionof{figure}{Equivalent reflexive polytopes can correspond to inequivalent Fano CDPs.}
\label{ineqFCDPs}
\end{table}

\subsection{Fano Divisorial Polytopes}\label{sec:divpoly}
The \emph{Fano} property of a CDP, as defined by Ilten and S\"u\ss{} in \cite{stability}, generalizes the reflexive property of polytopes.
Under the correspondence between divisorial polytopes and polarized rational complexity-one $T$-varieties, it corresponds exactly to canonical Gorenstein Fano $T$-varieties with anti-canonical polarization. We recall this property below in the context of CDPs.

For a polytope $\Box$, we denote its interior by $\Box^\circ$ and its boundary by $\partial\Box$. For a function $\Psi_i$, we denote the graph of $\Psi_i$ by $\Gamma(\Psi_i)$. A facet $F$ of a lattice polytope $P$ in $(M\times \ZZ)\otimes \RR$ is \emph{in height one} if there exists some $u\in M^*\times \ZZ$ such that $\langle v,u\rangle=1$ for all $v\in F$.

\begin{defn}
\label{Fano}

A CDP is \emph{Fano} if it is equivalent to a CDP~$\Psi$ with base polytope~$\Box$ and functions $\Psi_1,\dots,\Psi_n$ for which there are integers $a_1,\dots,a_n$ such that 
\begin{enumerate}
\item $0\in\Box^\circ$;
\item $\sum_{i=1}^na_i=-2$; \label{cdpdos}
\item For all $i=1\dots n$,  $\Psi_i(0)+a_i+1>0$, and each facet of $\Gamma(\Psi_i+a_i+1)$ is at height one; \label{cdptres}
\item For any facet $F$ of $\Box$ not at height one, $\sum_{i=1}^n\Psi_i\equiv0$ on $F$.
\end{enumerate}
\end{defn}
\noindent When each facet of $\Gamma(\Psi_i+a_i+1)$ is at height one, we say that $\Psi_i+a_i+1$ is \emph{at height~one}. 

\begin{rem}
\label{rem:Fanoprop}
The four properties of Definition~\ref{Fano} are preserved for
equivalent CDPs, provided that any transformation of the base
preserves the origin.
\end{rem}

\begin{rem}
\label{rem:FanoRefl}It is straightforward to check that under the construction producing a toric CDP from a lattice polytope in $M\times \ZZ$, the CDP is Fano if and only if its corresponding polytope is isomorphic to a reflexive polytope.
\end{rem}

A consequence of Remark~\ref{rem:FanoRefl}, along with the fact that the converse of Remark~\ref{rem:equivcor} does not hold in general, is that 
infinite families of inequivalent toric Fano CDPs can be constructed from a single isomorphism class of reflexive polytopes. In fact, Figure~\ref{ineqFCDPs} gives an example of how to construct such a family. Such examples explain the restriction of the statement of Conjecture~\ref{conj:1} to non-toric CDPs.

\subsection{Normalization}\label{sec:normalization}
For the purposes of classifying Fano CDPs, it would be useful to have some kind of normal form, that is, a distinguished representative in each equivalence class of Fano CDPs. While we have yet to find a natural normal form for Fano CDPs, in the following we describe a number of normalizing assumptions we will be making whenever considering a Fano CDP. This will be key to our strategy for obtaining bounds on the structure of Fano CDPs.

\begin{defn}\label{def:normalized}
A Fano CDP $\Psi=(\Psi_1,\ldots,\Psi_n)$ is \emph{normalized} if 
\begin{enumerate}
\item $\Psi$ satisfies the four criteria of Definition \ref{CDP};
\item If $\Psi_i$ is linear, then it does not have integral slope. 
\end{enumerate}
\end{defn}
Note that any non-toric Fano CDP $\Psi$ is equivalent to a normalized Fano CDP. Indeed, by virtue of being Fano, we can certainly satisfy the first property above by replacing $\Psi$ with an equivalent CDP. Furthermore, given any linear $\Psi_i$ with integral slope, we can shear and translate so that the corresponding function becomes zero, and then eliminate it. We repeat this until we have only two functions remaining (and have a toric CDP), or there are no more linear functions with integral slope.

In the remainder of this paper, we will always be dealing with a normalized Fano CDP $\Psi$ with base polytope $\Box$ and functions
 $\Psi_1,\dots,\Psi_n$. In particular, there are integers $a_i$ satisfying properties \ref{cdpdos} and \ref{cdptres} of Definition \ref{CDP}. It is often more convenient to  
 consider the translated functions
 $\Psi_i':=\Psi_i+a_i+1$, which are at height~one. Applying Property~\ref{cdpdos}
 of Definition \ref{CDP} to the translated functions $\Psi_i'$ yields

\begin{equation}
\label{eq1}
\sum_{i=1}^n \Psi_i'  > n-2 \quad \text{ on $\Box^\circ$} 
\end{equation}
and
\begin{equation}
\label{eq2}
 \sum_{i=1}^n \Psi_i'  \geq n-2 \quad \text{on $\partial\Box$.}
\end{equation}
Furthermore, applying Property~\ref{cdptres} of Definition~\ref{Fano} to the translated functions $\Psi_i'$ yields
\begin{equation}
\label{eq3}
\sum_{i=1}^n\Psi_i' \equiv n-2
\end{equation}
on any facet of $\Box$ that is not at height one.
We almost exclusively work with the translated functions $\Psi_i'$ in order to most easily exploit the property that the facets of their graphs are at height one. 

\section{Bounds on Number of Functions}
\label{sec:Bounds}
\subsection{Overview}
In this section we establish a bound on the number of functions in a normalized Fano CDP that is dependent on the base of the CDP (Theorem \ref{bound}).
This bound is established as follows: after assuming $M=\ZZ^d$,  we pick a point $v_j$ so that $v_j$ and its reflection through the origin $-v_j$ both lie on the $j^{th}$ coordinate axis and in the base polytope. We give both lower and upper bounds on the sum 
\begin{equation}
\label{eq4} 
    \sum_{i=1}^n\left(\Psi_i'(-v_j)+\Psi_i'(v_j)\right), 
\end{equation}
where $\Psi_1,\dots,\Psi_n$ are the functions in a Fano CDP. The lower bound on (\ref{eq4}) follows from Inequality (\ref{eq2}).  
The upper bound on (\ref{eq4}) is provided by Lemma~\ref{boundonsum2}, which uses the concavity of the functions $\Psi_i'$ to provide an upper bound for the sum $\Psi_i'(-v_j)+\Psi_i'(v_j)$.  We sum (\ref{eq4}) over all $j=1,\dots ,d$. By arguing that the upper bound on $\Psi_i'(-v_j)+\Psi_i'(v_j)$ can only be achieved $d-1$ times for fixed $i$, as otherwise $\Psi_i'$ would be linear with integral slope (see Lemmas~\ref{equiv1} and \ref{boundonsum2}), the bound given in the theorem  is obtained.

\subsection{Preliminaries} We establish some straightforward results on Fano CDPs.
We let $\Psi$ be a $(d+1)$-dimensional normalized Fano CDP with base polytope $\Box$ and translated functions $\Psi_1',\dots,\Psi_n'$, which are at height one. For simplicity, we assume that $M=\ZZ^d$.
 First we introduce the notion of \emph{integral} and \emph{non-integral} CDPs:

\begin{defn}
The function $\Psi_i$ is said to be \emph{integral} if $\Psi_i(v)\in\frac{1}{\lambda}\mathbb{Z}$ for all $\lambda\in \NN$, all points $v\in\Box\cap \frac{1}{\lambda} M$. Otherwise we say that $\Psi_i$ is \emph{non-integral.}
\end{defn}

\begin{rem}
If $\Psi_i$ is linear, then it is integral in the above sense if and only if it has integral slope.
\end{rem}

\begin{lemma}\label{lemma:rem}
For any $i=1,\ldots, n$, the function $\Psi_i$ is integral if and only if $\Psi_i'(0)=1$.
\end{lemma}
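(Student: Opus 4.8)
The plan is to reduce everything to the translated function $\Psi_i' = \Psi_i + a_i + 1$ and to exploit that it is concave with all facets at height one. Since $a_i+1\in\ZZ$ and $\tfrac{1}{\lambda}\ZZ$ is stable under integer translation, $\Psi_i$ is integral if and only if $\Psi_i'$ is. Writing $\Psi_i'$ as the minimum of the affine functions $\ell_F$ extending its top-dimensional linear pieces (possible because $\Psi_i'$ is concave and piecewise affine), I would first unpack the height-one hypothesis: for each facet $F$ there is $(u',m_F)\in M^*\times\ZZ$ with $\langle x,u'\rangle + m_F\,\Psi_i'(x)=1$ on $F$. Because the projection of a facet to $M\otimes\RR$ is $d$-dimensional, $m_F\neq 0$, and solving gives $\ell_F(x)=\langle x,-u'/m_F\rangle + 1/m_F$; in particular the constant term of $\ell_F$ is $1/m_F$.

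Next I would pin down $\Psi_i'(0)$. Evaluating the min-representation at the origin (which lies in $\Box^\circ$, hence in the domain) gives $\Psi_i'(0)=\min_F \ell_F(0)=\min_F 1/m_F$. Property~\ref{cdptres} of Definition~\ref{Fano}, applied to the translated functions, says $\Psi_i'(0)>0$; were some $m_F$ negative, the minimum would be negative, so every $m_F$ is a positive integer. Hence $\Psi_i'(0)=1/\max_F m_F\in(0,1]$.

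The equivalence then splits into two short arguments. If $\Psi_i$ (equivalently $\Psi_i'$) is integral, then taking $\lambda=1$ and $v=0$ forces $\Psi_i'(0)\in\ZZ$; combined with $\Psi_i'(0)\in(0,1]$ this gives $\Psi_i'(0)=1$. Conversely, if $\Psi_i'(0)=1$ then $\max_F m_F=1$, so every $m_F=1$; each $\ell_F$ then has integral slope $-u'\in M^*$ and integer constant term $1$, so for any $v\in\Box\cap\tfrac{1}{\lambda}M$ we get $\Psi_i'(v)=\ell_F(v)\in\tfrac{1}{\lambda}\ZZ$, i.e. $\Psi_i'$ is integral. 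The only delicate points are the standard facts that a concave piecewise-affine function equals the minimum of its facet extensions and that each facet projects onto a full-dimensional region of $\Box$; once these are in hand, the height-one normalization does all the work, and the positivity supplied by the Fano condition is exactly what confines $\Psi_i'(0)$ to $(0,1]$, making the integer value $1$ the unique obstruction to integrality.
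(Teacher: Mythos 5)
Your proof is correct. Note that the paper itself does not actually prove this lemma --- it simply cites \cite[Remark 3.7]{stability} --- so there is no internal argument to compare against; your write-up supplies a genuine self-contained proof. The key steps all check out: the reduction from $\Psi_i$ to $\Psi_i'$ via integer translation, the representation of the concave piecewise-affine $\Psi_i'$ as $\min_F \ell_F$ over the affine extensions of its facets, the observation that $m_F\neq 0$ because each facet projects onto a full-dimensional region of linearity, the computation $\ell_F(0)=1/m_F$, and the use of Property~(3) of Definition~\ref{Fano} to force every $m_F$ to be a positive integer, confining $\Psi_i'(0)=1/\max_F m_F$ to $(0,1]$. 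The forward direction ($\lambda=1$, $v=0$) and the converse (all $m_F=1$ gives integral slopes $-u'\in M^*$ and constant term $1$, hence values in $\tfrac{1}{\lambda}\ZZ$ at points of $\tfrac{1}{\lambda}M$) are both sound. This is exactly the kind of argument one would expect to find behind the cited remark, and it is a useful addition to have it spelled out in the present language.
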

\begin{proof}
See \cite[Remark 3.7]{stability}
\end{proof}

\begin{lemma}
\label{linearalonglines}
Suppose $\Psi_i'(0)=1$. Then $\Psi_i'$ is linear along the line segment from $0$ to $v$ for all $v\in\partial \Box$. 
\end{lemma}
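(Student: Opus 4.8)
The plan is to show that the hypothesis $\Psi_i'(0)=1$ forces every facet of the graph $\Gamma(\Psi_i')$ to lie in a hyperplane of a very restricted form, from which linearity along rays from the origin follows by homogeneity. Write each (non-vertical) facet $F$ of $\Gamma(\Psi_i')$ as the graph of an affine function $\ell_F$. Since $F$ is at height one, there is $(m_F,h_F)\in M^*\times\ZZ$ with $\langle x,m_F\rangle+h_F z=1$ on $F$, so that $\ell_F(x)=(1-\langle x,m_F\rangle)/h_F$ when $h_F\neq 0$. I would first confirm that the facets of $\Gamma(\Psi_i')$ may be identified with the maximal regions of linearity of $\Psi_i'$, i.e.\ the non-vertical top faces, so that the arguments below apply to all of them.

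First I would record two consequences of concavity. Because $\Gamma(\Psi_i')$ is the upper boundary of the convex hypograph of $\Psi_i'$, each supporting hyperplane containing a facet $F$ has the hypograph on the side $\langle x,m_F\rangle+h_F z\le 1$; letting $z\to-\infty$ forces $h_F\ge 0$, and since $F$ is non-vertical and $h_F\in\ZZ$ we get $h_F\ge 1$. The same containment, applied to the point $(x,\Psi_i'(x))$, yields the global inequality $\Psi_i'(x)\le\ell_F(x)$. Evaluating at $x=0$ gives $1=\Psi_i'(0)\le \ell_F(0)=1/h_F$, hence $h_F\le 1$. Combining the two bounds, $h_F=1$ for \emph{every} facet, so $\ell_F(x)=1-\langle x,m_F\rangle$.

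With all heights equal to one, the global inequalities $\Psi_i'\le\ell_F$ (together with equality on each facet) express $\Psi_i'$ as the minimum of its facet-functions, so
\[
\Psi_i'(x)=\min_F \ell_F(x)=1-\max_F\langle x,m_F\rangle .
\]
The function $x\mapsto \max_F\langle x,m_F\rangle$ is a maximum of linear forms vanishing at the origin, hence positively homogeneous. Therefore, for $v\in\partial\Box$ and $t\in[0,1]$ (noting $tv\in\Box$ since $0\in\Box^\circ$ and $\Box$ is convex), $\Psi_i'(tv)=1-t\max_F\langle v,m_F\rangle$, which is affine in $t$. This is exactly the assertion that $\Psi_i'$ is linear along the segment from $0$ to $v$.

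The only genuinely delicate point is the claim $h_F=1$: it requires pinning down the sign of $h_F$ from the orientation of the hypograph and knowing $h_F\in\ZZ$ from the height-one hypothesis, after which $\Psi_i'(0)=1$ squeezes $h_F$ between $1$ and $1$. Everything else — the min-of-affine-functions representation and the homogeneity of the resulting convex part — is formal, and in particular the argument does not require $v$ to be a lattice point.
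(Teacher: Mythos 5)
Your proof is correct, and it rests on the same key computation as the paper's: concavity of $\Psi_i'$ forces the affine extension $\ell_F$ of any facet to satisfy $\ell_F(0)\geq\Psi_i'(0)=1$, while the height-one condition gives $\ell_F(0)=1/h_F$ for an integer $h_F\geq 1$, so $h_F=1$. The paper packages this as a local contradiction along the given segment: if the segment crossed a second facet $F'$, the extension of $F'$ would exceed $1$ over the origin, placing the lattice point $(0,\dots,0,1)$ strictly between the hyperplane of $F'$ and the origin and violating integrality of its height. You instead run the computation uniformly over \emph{all} facets and extract the stronger structural fact that $\Psi_i'=1-\max_F\langle\cdot\,,m_F\rangle$, i.e.\ that $1-\Psi_i'$ is positively homogeneous and all regions of linearity are cones with apex at the origin; linearity along every ray from the origin is then immediate (and, as a bonus, Lemma~\ref{equiv1} also drops out of this description). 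The only point you gloss over is why the hypograph lies on the side $\langle x,m_F\rangle+h_F z\leq 1$ rather than $\geq 1$, which is what your limit $z\to-\infty$ needs: one must exhibit a point of the hypograph with pairing strictly less than $1$, and $(0,0)$ works precisely because $\Psi_i'(0)=1\geq 0$. With that one-line addition the argument is complete.
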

\begin{proof}
Restrict the base polytope $\Box$ to the segment, which we denote by $L$, and let $F$ be a facet of $\Gamma(\Psi_i')$ containing the point above the origin and some other point of $L$. Suppose $F$ meets another facet, say $F'$, along $L$. Extending $F'$, its value over the origin would be larger than 1 by the concavity of $\Psi_i'$. Then the point $(0,\dots,0,1)$ lies between the extension of $F'$ and the origin, contradicting $F'$ being at height one.
\end{proof}
\begin{lemma}

\label{equiv1}

If $\Psi_i'$ is identically one along every coordinate axis, then $\Psi_i'\equiv1$.  
 
\end{lemma}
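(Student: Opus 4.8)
The plan is to first show that $\Psi_i'$ equals $1$ on a full-dimensional neighbourhood of the origin, and then to propagate this flatness outward along rays using the linearity supplied by Lemma~\ref{linearalonglines}. Since the origin lies on every coordinate axis, the hypothesis immediately gives $\Psi_i'(0)=1$. Thus Lemma~\ref{linearalonglines} applies and tells us that $\Psi_i'$ is affine along the segment from $0$ to every boundary point $v\in\partial\Box$; I will use this as the mechanism for spreading the value $1$.

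For the local statement, I would argue as follows. Fix $w\in\Box$ close enough to the origin that each of the axis points $d\,w_j e_j$ still lies in $\Box$ (possible since $0\in\Box^\circ$). Writing $w=\frac1d\sum_{j=1}^d (d\,w_j)e_j$ exhibits $w$ as a convex combination of points lying on the coordinate axes, on each of which $\Psi_i'$ equals $1$; concavity then yields $\Psi_i'(w)\ge 1$. Applying this to both $w$ and $-w$, which are again near the origin, and invoking concavity on the segment $[-w,w]$ through its midpoint $0$, gives $1=\Psi_i'(0)\ge\frac12\Psi_i'(w)+\frac12\Psi_i'(-w)\ge 1$, forcing $\Psi_i'(w)=\Psi_i'(-w)=1$. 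Hence $\Psi_i'\equiv 1$ on some neighbourhood $U$ of the origin.

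Finally I would combine the two ingredients. Every point of $\Box$ lies on the segment $[0,v]$ for the boundary point $v\in\partial\Box$ on its ray from the origin, and this segment begins inside $U$. By Lemma~\ref{linearalonglines}, $\Psi_i'$ is affine on $[0,v]$; being equal to the constant $1$ on the initial sub-segment $[0,v]\cap U$ of positive length, an affine function of one variable must equal $1$ on all of $[0,v]$. As $v$ ranges over $\partial\Box$ these segments sweep out all of $\Box$, so $\Psi_i'\equiv 1$.

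I expect the only delicate point to be the local step: the origin need not lie in the interior of a single affine piece of the graph of $\Psi_i'$ (it may well be a vertex where several pieces meet), so I deliberately avoid any claim about a single linearity domain and instead extract flatness purely from concavity together with the prescribed values on the axes. Everything after that is a direct consequence of Lemma~\ref{linearalonglines}, so the height-one hypothesis enters the argument only implicitly, through that lemma.
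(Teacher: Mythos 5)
Your proof is correct and uses the same ingredients as the paper's: concavity of $\Psi_i'$ applied to convex combinations of axis points, the reflection $w\mapsto -w$ through the interior origin to upgrade $\geq 1$ to $=1$, and Lemma~\ref{linearalonglines} to propagate along rays. The only difference is the order of operations (you establish equality locally and then spread it outward, whereas the paper first spreads the inequality $\Psi_i'\geq 1$ globally and then rules out strict inequality by the reflection argument), so this is essentially the paper's proof.
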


\begin{proof}
By concavity of $\Psi_i'$, $\Psi_i'\geq 1$ on the convex hull of the intersection of $\Box$ with the coordinate axes. 
Thus by Lemma~\ref{linearalonglines}, $\Psi_i'\geq1$ along the line from the origin to any point $v\in \Box$, and hence $\Psi_i'\geq1$ on $\Box$.

Suppose that there is some point $v\in \Box$ such that $\Psi'_i(v)>1$. Choosing $\alpha>0$ sufficiently small so that $-\alpha v\in \Box$, the concavity of $\Psi'_i$ would imply that $\Psi'_i(-\alpha v)<1$, a contradiction. Hence $\Psi'_i\equiv1$.
\end{proof}

 

\begin{lemma}

\label{leq1/2}

If $\Psi_i'$ is non-integral, then $\Psi_i'(0)=1/\lambda$ for some $\lambda\in \ZZ_{>1}$. In particular, $\Psi_i'(0)\leq\frac{1}{2}$. 
\end{lemma}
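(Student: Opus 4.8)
The plan is to read off the value $\Psi_i'(0)$ directly from the height-one condition on the facets of the graph $\Gamma(\Psi_i')$. Since $0\in\Box^\circ$ and the projections of the top-dimensional cells of $\Gamma(\Psi_i')$ tile $\Box$, I would first fix a facet $F$ of $\Gamma(\Psi_i')$ whose projection to $\Box$ contains the origin. Over $F$ the function $\Psi_i'$ agrees with an affine function, say $\Psi_i'(x)=\langle x,w\rangle+c$ with $w\in\RR^d$ and $c\in\RR$; evaluating at $x=0$ gives $\Psi_i'(0)=c$, so it suffices to show $c=1/\lambda$ for an integer $\lambda>1$.

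The key step is to unwind the definition of being at height one. By hypothesis $\Psi_i'$ is at height one, so there is $u=(u',u_{d+1})\in M^*\times\ZZ=\ZZ^d\times\ZZ$ with $\langle(x,\Psi_i'(x)),u\rangle=1$ for every point $(x,\Psi_i'(x))$ of $F$. Substituting the affine expression yields $\langle x,\,u'+u_{d+1}w\rangle+u_{d+1}c=1$ identically on the $d$-dimensional set $\pi_1(F)$, which forces $u'+u_{d+1}w=0$ and $u_{d+1}c=1$. In particular $u_{d+1}$ is a nonzero integer and $\Psi_i'(0)=c=1/u_{d+1}$.

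It remains to pin down the sign and to rule out $u_{d+1}=1$. Positivity of $\Psi_i'(0)$ comes from Property~\ref{cdptres} of Definition~\ref{Fano} (namely $\Psi_i(0)+a_i+1=\Psi_i'(0)>0$), so $u_{d+1}>0$; set $\lambda:=u_{d+1}\in\ZZ_{>0}$. If $\lambda=1$ then $\Psi_i'(0)=1$, which by Lemma~\ref{lemma:rem} would make $\Psi_i$ integral, contradicting the non-integrality assumption. Hence $\lambda\geq 2$, giving $\Psi_i'(0)=1/\lambda\leq\frac12$.

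The only genuinely delicate point is the well-definedness of the first step: when the origin lies on the shared boundary of several facets, I should observe that each such facet produces its own integer $u_{d+1}^{(F)}$ with $\Psi_i'(0)=1/u_{d+1}^{(F)}$, so these integers all coincide and the conclusion is independent of the chosen facet. Everything else is a direct consequence of the height-one definition combined with the already-established Lemma~\ref{lemma:rem}.
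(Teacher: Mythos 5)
Your proposal is correct and follows essentially the same route as the paper: both arguments pair the point $(0,\dots,0,\Psi_i'(0))$ of $\Gamma(\Psi_i')$ with the integral functional witnessing the height-one condition to obtain $\lambda\,\Psi_i'(0)=1$ for some $\lambda\in\ZZ$, then use positivity of $\Psi_i'(0)$ and Lemma~\ref{lemma:rem} to exclude $\lambda=1$. Your version merely spells out the affine form of $\Psi_i'$ on a facet over the origin and the well-definedness across adjacent facets, details the paper leaves implicit.
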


\begin{proof}

The point $p=(0,\dots,0,\Psi_i'(0))$ is in $\Gamma(\Psi_i')$. Since $\Gamma(\Psi_i')$ at height one, there is some $v\in\mathbb{Z}^{d+1}$ such that $\langle p,v\rangle=-1$, that is, 
\[ \Psi_i'(0)=\frac{-1}{\lambda}, \]
where $\lambda=v_{d+1}$ is the $(d+1)^{th}$ component of $v$.
Since $0 < \Psi_i'(0) <1$ by assumption and Lemma \ref{lemma:rem}, and $v_{d+1}\in\ZZ$, we have the desired result.
\end{proof}

\begin{lemma}
\label{boundonsum2}
Let $v\in \RR^d$ lie on one of the coordinate axes, and suppose that $\pm v\in \Box$. 
\begin{enumerate}
\item If $\Psi_i'$ is non-integral, then
\[ \Psi_i'(-v)+\Psi_i'(v)\leq 1. \]
\item If $\Psi_i'$ is integral but non-linear along the line segment from $-v$ to $v$, then 
\[ \Psi_i'(-v)+\Psi_i'(v)\leq 2 -|v|. \]
\item If $\Psi_i'$ is integral and linear along the line segment from $-v$ to $v$, then
\[ \Psi_i'(-v)+\Psi_i'(v)= 2. \]
\end{enumerate}
\end{lemma}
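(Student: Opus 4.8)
The plan is to restrict $\Psi_i'$ to the line segment $L$ joining $-v$ to $v$ and to analyze the resulting one-variable concave piecewise-linear function. Writing $f(t)=\Psi_i'(tv)$ for $t\in[-1,1]$, the function $f$ is concave with $f(-1)=\Psi_i'(-v)$, $f(0)=\Psi_i'(0)$ and $f(1)=\Psi_i'(v)$. The three cases differ only in how much control we have on the slopes of $f$, so I would treat them in increasing order of difficulty.

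The two easy cases follow essentially from concavity together with the earlier lemmas. If $\Psi_i'$ is non-integral (Case~1), concavity of $f$ gives $f(0)\ge\tfrac12\bigl(f(-1)+f(1)\bigr)$, so $\Psi_i'(-v)+\Psi_i'(v)\le 2\Psi_i'(0)$; since $\Psi_i'(0)\le\tfrac12$ by Lemma~\ref{leq1/2}, the bound $\le 1$ is immediate. If $\Psi_i'$ is integral and $f$ is affine on $[-1,1]$ (Case~3), then $\Psi_i'(0)=1$ by Lemma~\ref{lemma:rem}, and writing $f(t)=1+st$ gives $f(-1)+f(1)=2$ on the nose.

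Case~2 is the substantive one. Here $\Psi_i'(0)=1$ again by Lemma~\ref{lemma:rem}, and Lemma~\ref{linearalonglines} shows $\Psi_i'$ is affine along the segment from $0$ out to $\partial\Box$ in each of the directions $\pm v$; since $v,-v\in\Box$ lie on these segments, $f$ is affine on $[0,1]$ and on $[-1,0]$, so the only possible break of $f$ is at $t=0$. Non-linearity of $\Psi_i'$ along $L$ therefore means a genuine kink at $t=0$. Writing $s_-,s_+$ for the slopes of $f$ on $[-1,0]$ and $[0,1]$, concavity forces $s_-\ge s_+$ and the kink forces $s_->s_+$; since $f(-1)+f(1)=2-(s_--s_+)$, it remains to show $s_--s_+\ge|v|$. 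This is where the height-one hypothesis enters, and it is the main obstacle. I would take a facet $F$ of $\Gamma(\Psi_i')$ whose closure contains the graph over $[0,v]$ and write its affine extension as $u\mapsto\langle u,m\rangle+c$. Height one supplies $(w,\lambda)\in M^*\times\ZZ$ with $\langle u,w\rangle+\lambda(\langle u,m\rangle+c)\equiv 1$, forcing $w+\lambda m=0$ and $\lambda c=1$; since $F$ passes through $(0,\Psi_i'(0))=(0,1)$ we get $c=1$, hence $\lambda=1$ and $m=-w\in M^*$ is integral.

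To finish, I would convert this integrality into the one-dimensional statement. Writing $v=ce_j$ on the $j$-th coordinate axis, so that $|v|=|c|$, the slope of $f$ on $[0,1]$ is $s_+=\langle v,m\rangle=c\,m_j\in|v|\ZZ$, and the same argument applied to the facet over $[-v,0]$ gives $s_-\in|v|\ZZ$. Since $s_->s_+$ are distinct elements of the lattice $|v|\ZZ$, their difference is at least $|v|$, yielding $\Psi_i'(-v)+\Psi_i'(v)\le 2-|v|$. I expect the only delicate point to be this slope-integrality step: one must correctly extract that the facets meeting over the origin have intercept exactly $1$ and hence integral slope, and then observe that evaluating such an integral slope along $v=ce_j$ lands the one-dimensional slopes $s_\pm$ in $|v|\ZZ$. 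Everything else reduces to elementary concavity estimates and direct appeals to Lemmas~\ref{leq1/2}, \ref{lemma:rem}, and \ref{linearalonglines}.
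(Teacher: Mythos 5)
Your proof is correct and follows essentially the same route as the paper's: cases (1) and (3) via concavity together with Lemmas~\ref{leq1/2} and \ref{lemma:rem}, and case (2) by using the height-one condition through the point $(0,1)$ to force the slopes of the two facets meeting over the origin to be integral, so that non-linearity costs at least one integer unit of slope. The only cosmetic difference is that the paper shears so that $\Psi_i'\equiv 1$ on one half of the segment before reading off the bound, whereas you keep both slopes $s_\pm$ and argue directly that their difference lies in $|v|\ZZ_{>0}$; you also spell out the integral-slope step that the paper leaves implicit.
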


\begin{proof}

First assume that $\Psi_i'$ is non-integral. Then, by the concavity of $\Psi_i'$ and by Lemma~\ref{leq1/2}, we have
$$ \frac{\Psi_i'(-v)+\Psi_i'(v)}{2} \leq \Psi_i'(0) \leq \frac{1}{2} $$
and the result follows.

Suppose $\Psi_i'$ is integral. By shearing we may assume that $\Psi_i'(-v)=1$. If $\Psi_i'$ is linear along the line segment from $-v$ to $v$, then $\Psi_i'(v)=1$, and the result in this case holds. Otherwise suppose $\Psi_i'$ is non-linear along this line segment. Then the slope of $\Psi_i'$ along the line segment from the origin to $v$ is at most $-1$, which gives $\Psi_i'(v)\leq1-|v|$.  Since the bound on the sum $\Psi_i'(-v)+\Psi_i'(v)$ is invariant under shearing, the result follows.
\end{proof}

\subsection{Main Result}

We now show that the number of functions in a normalized Fano CDP is bounded by a value determined only by the base of the CDP. 

Fix a lattice polytope $\Box$ containing the origin in its interior.
For any primitive element $v$ of the lattice $M$, define
\[\alpha_v=\min\{1,\max\{ \alpha\in \RR_{\geq 0}\ |\ \pm\alpha v \in \Box\}\}.\]
\begin{lemma}\label{lemma:bound}
	Let $v\in M$ be primitive.
Any normalized Fano CDP has at most $4/\alpha_v$ functions $\Psi_i$ which are either non-integral, or non-linear along the line spanned by $v$.
\end{lemma}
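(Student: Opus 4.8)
The plan is to bound the quantity $\sum_{i=1}^n \left( \Psi_i'(-w) + \Psi_i'(w) \right)$ both from below and from above, where $w$ is a suitable point on the line spanned by $v$, and then to read off the count by comparing the two bounds. First I would reduce to the case that $v$ is a coordinate axis direction: since $v$ is primitive, there is a lattice automorphism $\phi\in GL_d(\ZZ)$ with $\phi(v)=e_1$, and transforming the base by $\phi$ preserves the origin, hence preserves the Fano and normalization properties by Remark~\ref{rem:Fanoprop}, while also preserving $\alpha_v$, integrality, and linearity along the line. Thus I may assume $v=e_1$ and set $w:=\alpha_v v$, so that $w$ lies on a coordinate axis, $\pm w\in\Box$ by definition of $\alpha_v$, and $|w|=\alpha_v\le 1$.

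For the lower bound I note that both $w$ and $-w$ lie in $\Box$, so each of $\sum_i \Psi_i'(w)$ and $\sum_i \Psi_i'(-w)$ is at least $n-2$ by Inequalities~(\ref{eq1}) and~(\ref{eq2}); summing gives $\sum_{i=1}^n \left(\Psi_i'(-w)+\Psi_i'(w)\right)\ge 2(n-2)$.

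For the upper bound I would apply Lemma~\ref{boundonsum2} to $w$, case by case. A non-integral function contributes at most $1\le 2-\alpha_v$, and an integral function that is linear along $[-w,w]$ contributes exactly $2$. The crux is the integral functions that are non-linear along the line spanned by $v$: I claim that for each such function the non-linearity already occurs on $[-w,w]$, so that case~(2) of Lemma~\ref{boundonsum2} applies and the contribution is at most $2-|w|=2-\alpha_v$. Indeed, an integral $\Psi_i'$ satisfies $\Psi_i'(0)=1$ by Lemma~\ref{lemma:rem}, so by Lemma~\ref{linearalonglines} it is linear from the origin to each boundary point along the line; hence its only possible break point on the line is the origin, which lies strictly between $-w$ and $w$ since $\alpha_v>0$. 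Consequently \emph{non-linear along the line} coincides with \emph{non-linear on} $[-w,w]$ for integral functions. Writing $k$ for the number of functions that are non-integral or non-linear along the line, these contribute at most $2-\alpha_v$ each while the remaining $n-k$ contribute at most $2$ each, giving $\sum_{i=1}^n \left(\Psi_i'(-w)+\Psi_i'(w)\right)\le 2n-\alpha_v k$.

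Combining the two bounds yields $2(n-2)\le 2n-\alpha_v k$, that is $\alpha_v k\le 4$, which is the asserted bound $k\le 4/\alpha_v$. I expect the main obstacle to be the reconciliation in the third paragraph between non-linearity along the whole line (as phrased in the statement) and non-linearity on the finite segment $[-w,w]$ needed to invoke Lemma~\ref{boundonsum2}; this is exactly where Lemmas~\ref{lemma:rem} and~\ref{linearalonglines} pin the only break point to the origin. A minor point to check is that $\alpha_v>0$, which holds because $0\in\Box^\circ$.
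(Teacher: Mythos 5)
Your proposal is correct and follows essentially the same route as the paper: bound $\sum_i\bigl(\Psi_i'(-\alpha_v v)+\Psi_i'(\alpha_v v)\bigr)$ below by $2n-4$ via Inequality~\eqref{eq2} and above by $2n-\alpha_v k$ via Lemma~\ref{boundonsum2}, using Lemma~\ref{linearalonglines} to identify non-linearity along the whole line with non-linearity on the segment. Your explicit reduction to $v=e_1$ and the careful reconciliation in your third paragraph are just spelled-out versions of steps the paper leaves implicit.
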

\begin{proof}
Set $v'=\alpha_vv$.
 Define
\[ R := \{i\ |\ 1\leq i\leq n,\ \textrm{and}\ \Psi_i'(-v')+\Psi_i'(v')<2\} \]
and $r:= \# R$.
Then by Lemmas~\ref{boundonsum2} and \ref{linearalonglines}, $r$ is exactly the number of functions which are either non-integral, or non-linear along the line spanned by $v$.
Note that, for each $i\in R$, we have by Lemma~\ref{boundonsum2} that $\Psi_i'(-v')+\Psi_i'(v') \leq 2-\alpha_v.$ This bound works for non-integral $\Psi_i'$ because $\alpha_v\leq1$. Using this and the lower bound given in Inequality \eqref{eq2}, we have
$$ 2n-4 \leq \sum_{i=1}^n(\Psi_i'(-v')+\Psi_i'(v')) \leq 2(n-r) +r(2-\alpha_v)=2n-\alpha_vr $$
and hence $r\leq \frac{4}{\alpha_v}$.
\end{proof}

Now, fix a basis $e_j$ of the lattice $M$.
We define the constant $c(\Box)$ by
\[
	c(\Box)=\sum_{j\leq d} \frac{4}{\alpha_{e_j}}
.\]

\begin{thm}
\label{bound}
Let $\Psi$ be a $(d+1)$-dimensional normalized Fano CDP with base polytope
$\Box$. Then $\Psi$ has at most ${c(\Box)}$ functions.
\end{thm}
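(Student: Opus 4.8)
The plan is to bound the total number of functions in $\Psi$ by separating them into two types: those that are ``well-behaved'' (integral and linear along every coordinate axis) and those that are not. Lemma~\ref{lemma:bound} already controls the second type along any single primitive direction, so the strategy is to apply it once for each basis vector $e_j$ and then argue that a function escaping all of these bounds must be trivial.

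\medskip

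\noindent First I would invoke Lemma~\ref{lemma:bound} with $v = e_j$ for each $j = 1, \dots, d$. This tells me that the number of functions $\Psi_i$ which are non-integral, or non-linear along the line spanned by $e_j$, is at most $4/\alpha_{e_j}$. Summing these counts over all $j$ gives a quantity bounded by $\sum_{j \leq d} 4/\alpha_{e_j} = c(\Box)$. The key observation is that this sum counts (with multiplicity) every function that fails to be both integral and linear along at least one coordinate axis. So if I let $S$ denote the set of indices $i$ for which $\Psi_i'$ is either non-integral or non-linear along some coordinate axis, then $\#S \leq c(\Box)$.

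\medskip

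\noindent The main step is then to show that there are \emph{no} functions outside $S$, i.e. that every function $\Psi_i'$ is either non-integral or non-linear along some coordinate axis. Suppose for contradiction that some $\Psi_i'$ is integral and linear along every coordinate axis. By Lemma~\ref{lemma:rem}, integrality gives $\Psi_i'(0) = 1$, and by Lemma~\ref{linearalonglines} combined with linearity along each axis, $\Psi_i'$ is identically one along every coordinate axis (the linear function agreeing with $\Psi_i'(0)=1$ at $0$ and forced by height-one to have value $1$ at the boundary points on each axis). Then Lemma~\ref{equiv1} forces $\Psi_i' \equiv 1$, meaning $\Psi_i$ is linear with integral slope. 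But this contradicts the normalization assumption (Definition~\ref{def:normalized}), which forbids linear functions of integral slope in a normalized Fano CDP.

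\medskip

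\noindent Therefore every index $i$ lies in $S$, so $n = \#S \leq c(\Box)$, as desired. The hard part is the bookkeeping in the contradiction argument: I need to verify carefully that ``integral and linear along every coordinate axis'' genuinely forces $\Psi_i' \equiv 1$ via Lemmas~\ref{linearalonglines} and \ref{equiv1}, since Lemma~\ref{equiv1} requires the function to be \emph{identically one} along each axis rather than merely linear. The subtlety is that integrality ($\Psi_i'(0)=1$) plus the height-one property should pin down the value at the axis endpoints to be exactly $1$, so that linearity then yields the constant value $1$ on each axis; I would want to confirm this follows cleanly from the height-one constraint before appealing to Lemma~\ref{equiv1}.
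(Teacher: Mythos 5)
Your overall architecture is exactly the paper's: apply Lemma~\ref{lemma:bound} once per basis direction $e_j$, take the union bound $\#S\leq\sum_j 4/\alpha_{e_j}=c(\Box)$, and then show that no function can lie outside $S$ by combining Lemma~\ref{equiv1} with the normalization hypothesis. So there is nothing different in strategy to compare.

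However, the step you yourself flagged as the delicate one is justified incorrectly. You claim that if $\Psi_i'$ is integral and linear along every coordinate axis, then the height-one condition forces $\Psi_i'$ to take the value $1$ at the boundary points of each axis, hence to be identically one there. This is false: the function $\Psi_i'(x)=1-x_1$ on, say, $\Box=[-1,1]^2$ is integral, linear along both axes, and at height one (pair its graph with $(1,0,1)\in M^*\times\ZZ$), yet it equals $0$ and $2$ at the endpoints of the $e_1$-axis. What the height-one condition actually gives you is this: the facet of $\Gamma(\Psi_i')$ through $(0,\Psi_i'(0))=(0,1)$ satisfies $\langle(x,\ell(x)),u\rangle=1$ for some $u=(\bar u,u_{d+1})\in M^*\times\ZZ$; evaluating at $(0,1)$ forces $u_{d+1}=1$, whence $\ell(x)=1-\langle x,\bar u\rangle$ has \emph{integral} slope. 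So an integral function that is linear along every coordinate axis has integral slope along each axis; after shearing by $\bar u$ (which is an equivalence operation and preserves integrality, concavity, and the height-one property) it becomes identically one along every axis, and only \emph{then} does Lemma~\ref{equiv1} apply to give $\Psi_i'\equiv 1$. Undoing the shear, the original $\Psi_i$ is linear with integral slope, which is what normalization forbids. With this repair your argument closes, and it coincides with the paper's proof; as written, though, the parenthetical justification is a genuine (if local and fixable) error rather than a verified step.
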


\begin{proof}

Let $\Psi$ and $\Box$ be as above. By Lemma \ref{equiv1}, any integral function that is linear along each coordinate axis is linear; since $\Psi$ is normalized, no such functions exist. Hence, each $\Psi_i$ must be either non-integral, or non-linear along one of the coordinate axes. Applying Lemma \ref{lemma:bound} for $v=e_1,\ldots,e_d$, we obtain 
\[
	n\leq\sum_{j=1}^d  4/\alpha_{e_j}=c(\Box).
\]
\end{proof}

\begin{ex}\label{ex:cross}
	Let $\Box$ be the $d$-dimensional cross-polytope, that is, $\Box$ is the convex hull in $\RR^d$ of $\pm e_1,\ldots,\pm e_d$, where $e_i$ are the standard basis vectors. Then using the standard basis, $c(\Box)=4d$, so by Theorem \ref{bound} any normalized Fano CDP with base $\Box$ has at most $4d$ functions.

	In fact, this bound is sharp. For $j=0,\ldots,d-2$, set
	\[
		\Psi'_{1+4j}(x)=
		\Psi'_{2+4j}(x)=
		\Psi'_{3+4j}(x)=
		\Psi'_{4+4j}(x)=
		\min\{1,x_{j+1}+1\}
	\]
and also set
\begin{align*}
		\Psi'_{4d-3}(x)=
		\Psi'_{4d-2}(x)=
		\Psi'_{4d-1}(x)=
		\min\{1,x_d+1\}\\
		\Psi'_{4d}=\min\{1,x_d+1\}-\sum_{k=1}^d 2x_k.
\end{align*}
These functions are all at height one, and satisfy Inequality \eqref{eq2}, hence they come from a normalized Fano CDP $\Psi$ with $4d$ non-linear functions.
	\end{ex}

\section{Finiteness over Fixed Base}
\label{sec:Base}
In this section, we establish Main Theorem~\ref{thm:2}, which says
that there are only finitely many equivalence classes of Fano CDPs over
a fixed base. We do this by showing in Theorem~\ref{fixedPn} that if
we first fix the number of functions in our CDP, there are only
finitely many possibilities over the given base. Main Theorem~\ref{thm:2} then
follows immediately, given Theorem~\ref{bound}.

The proof of Theorem \ref{fixedPn}, is an argument which reduces
possibilities by considering the \emph{regions of linearity} of a
function.  Let $\Psi$ be a CDP with base $\Box$ and functions
$\Psi_1,\dots,\Psi_n$. Each of the functions $\Psi_i$ are
piecewise linear and concave. Consequently, the facets of
$\Gamma(\Psi_i)$ are convex, and so project down to a subdivision of
$\Box$ into polytopes. Moreover, as $\Gamma(\Psi_i)$ has integral
vertices, the vertices of the polytopes in the decomposition are
integral. Hence $\Gamma(\Psi_i)$ induces a subdivision of the base
polytope into a union of finitely many lattice polytopes. We say that
a $d$-dimensional polytope in this subdivision is a \emph{region of
  linearity} of the function $\Psi_i$, where $d$ is the dimension of
the base polytope~$\Box$.

\begin{thm}
\label{fixedPn}
Let $\Box$ be a fixed lattice polytope and $n$ a fixed positive integer. Up to equivalence, there are only finitely many Fano CDPs with base polytope $\Box$ and $n$ functions. 
\end{thm}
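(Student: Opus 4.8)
The plan is to fix the base $\Box$ and the number of functions $n$, reduce to a normalized Fano CDP with at most $n$ functions (Definition~\ref{def:normalized}), and encode such a CDP by the integer values its translated functions $\Psi_i'$ take on the lattice points of $\Box$. Since $\Gamma(\Psi_i')$ has integral vertices, each $\Psi_i'$ is the piecewise-linear interpolation of integer values on the vertices of its subdivision into regions of linearity, and these vertices lie in the finite set $\Box\cap M$. Consequently there are only finitely many possible subdivisions, and it suffices to fix the subdivision of each $\Psi_i'$ and then bound, up to equivalence, the finitely many integers $\Psi_i'(p)$, $p\in\Box\cap M$. The whole argument thus reduces to bounding these integers modulo the translation and shearing actions.

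First I would record two rigidity facts coming from the height-one property. For any region of linearity $R$ of $\Psi_i'$, writing $\ell_R$ for the affine extension of $\Psi_i'|_R$, concavity together with $\Psi_i'(0)>0$ forces the intercept $c_R := \ell_R(0)$ to be positive, while the height-one condition forces $c_R = 1/\lambda_R$ for an integer $\lambda_R\geq 1$. Expressing $0$ as an affine combination of lattice vertices of $R$ shows $c_R\in\frac1D\ZZ$ for $D$ the normalized volume of $\Box$; hence $\lambda_R\leq D$ and the intercepts take finitely many values depending only on $\Box$. In particular the slope $w_R$ of $\ell_R$ lies in $\frac{1}{\lambda_R}M^*$ with bounded denominator, so modulo the shearing action (which shifts $w_R$ by an element of $M^*$, uniformly over the regions of a single function) each slope admits a bounded representative.

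Next I would normalize the slopes. The sum $\Sigma=\sum_i\Psi_i'$ is invariant under translation and shearing, is concave, and by \eqref{eq1}--\eqref{eq2} satisfies $\Sigma\geq n-2$ on $\Box$ while $\Sigma(0)\le n$ (each $\Psi_i'(0)\le 1$ by Lemmas~\ref{lemma:rem} and~\ref{leq1/2}); since $0\in\Box^\circ$, this bounds the slope of $\Sigma$ at the origin, hence bounds $\sum_i w^{(i)}_0$, where $w^{(i)}_0$ is the origin-region slope of $\Psi_i'$. Using the shearing action I can replace $w^{(1)}_0,\dots,w^{(n-1)}_0$ by bounded representatives; the shear-invariant sum then forces $w^{(n)}_0$ to be bounded as well. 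With all origin-region slopes bounded, concavity gives $\Psi_k'(p)\le \ell_{R^{(k)}_0}(p)$ for every $p\in\Box$, producing a uniform upper bound on each function; feeding these upper bounds into the lower bound \eqref{eq2} then yields a uniform lower bound on each $\Psi_i'$. Thus every value $\Psi_i'(p)$ lies in a bounded integer range, leaving finitely many possibilities once the finitely many subdivisions are fixed, with a final translation pinning down the additive constants.

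The main obstacle is the slope normalization of the third step: because the shearing action couples all $n$ functions through the constraint $\sum\beta_i=0$, one cannot reduce every slope independently, and a non-integral $\Psi_i'$ may genuinely bend across a facet through the origin, where the intercept bound provides no control. This is exactly where the invariance of $\Sigma$ and its bounded gradient at the origin are essential: they pin down the one slope that shearing cannot adjust, after which Lemma~\ref{linearalonglines} (ruling out such bends for integral functions, via $\Psi_i'(0)=1$) together with concavity and \eqref{eq2} control the remaining non-integral bends. I expect that verifying all of these bounds depend only on $\Box$ and $n$, and interact consistently across the whole tuple of functions, will be the most delicate part of the proof.
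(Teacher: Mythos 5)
Your proposal is correct and follows essentially the same strategy as the paper's proof: fix the finitely many possible subdivisions into regions of linearity, shear so that the first $n-1$ functions are controlled on a cell containing the origin (hence bounded above everywhere by concavity), use $\sum\Psi_i'\geq n-2$ together with concavity through the interior point $0$ to control the remaining function, and conclude by counting the bounded integral vertices of the graphs. The one notable variation is how you pin down $\Psi_n'$ --- via the shear-invariance and bounded gradient of $\Sigma=\sum\Psi_i'$ at the origin (plus the height-one control of intercepts $1/\lambda_R$), where the paper instead first lower-bounds $\Psi_n'$ from the inequality and then upper-bounds $\Psi_n'(u)$ by the line through $(0,\Psi_n'(0))$ and a reflected point $(-\alpha u,\Psi_n'(-\alpha u))$ --- but the two mechanisms are interchangeable and both rest on the same concavity-through-the-origin idea.
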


\begin{proof}
  Suppose $\Psi$ is a Fano CDP with base polytope $\Box$ and
  translated functions $\Psi_1',\dots,\Psi_n'$. By shearing and by
  using Inequality~\eqref{eq2} we can bound $\Psi_n'(u)$ at each point
  $u\in\Box$ as follows: 

\begin{step}
Fix regions of linearity. 
\end{step}
\noindent
As discussed above, each $\Psi_i'$ induces a subdivision of $\Box$ into its regions of linearity; the set of these regions we denote by $R_i$. Since $\Box$ contains only finitely many lattice points, there are only finitely many possibilities for the sets $R_i$, so we may assume that we have fixed them once and for all.

\begin{step} 
Give upper bounds for $\Psi_1',\dots,\Psi_{n-1}'$. 
 \end{step}
\noindent
Consider the polytopes given by intersections of the form $P_{1}\cap\cdots\cap P_{n-1}$, where each $P_{i}\in R_i$. Let $P$ be one of the $d$-dimensional polytopes containing the origin obtained through this process. Note that the restriction of each $\Psi_i'$ to $P$ is a linear function for $i=1,\dots, n-1$. Let $C$ be the cone generated by the elements of $P$. By \cite[Theorem 11.1.9]{CLS}, it contains a lattice basis $v_1,\dots,v_d\in M$. Any element of $C$ is a scalar multiple of a point in $P$, and hence $P$ contains points $p_1,\dots,p_d$ where $p_i=\alpha_iv_i$ for some $\alpha_i\in\mathbb{R}$. As $P$ is convex, we can assume that $|\alpha_i|\leq1$.

The unimodular basis $v_1,\dots,v_d$ corresponds to a basis $v_1^*,\dots,v_d^*$ in $M^*$ so that $v_i^*(v_j)=\delta_{ij}$.  Thus, if we shear $\Psi_i'$ using $v_j^*$, then $\Psi_i'(p_k)$ stays fixed for $k\neq j$. Hence we may assume that $0\leq\Psi_i'(p_j)< 1$ for all $i=1,\dots, n-1$ and $j=1,\dots ,d$. 

As the restriction of $\Psi_i'$ to $P$ is a linear function, it is determined by its values at the points in the set $\{0,p_1,\dots,p_d\}$. Let $L_i$ be the extension of this function to all of $\Box$. By the concavity of $\Psi_i'$, the maximum of $L_i$ bounds $\Psi_i'$ by above.  Consider the set of linear functions $\phi$ defined on $\Box$ such that for each point $p\in\{0,p_1,\dots,p_d\}$ either $\phi(p)=0$ or $\phi(p)=1$. There are only finitely many such functions, and the maximum value obtained by them provides an upper bound for $L_i$ and hence for $\Psi_i'$.

\begin{step}  
Give lower bound for $\Psi_n'$.
\end{step}
\noindent
The upper bounds for $\Psi_1',\dots,\Psi_{n-1}'$ give a lower bound for $\Psi_n'$, through use of Inequality~(\ref{eq2}), that is, $n-2\leq\sum \Psi_i'$.

\begin{step}
Give upper bound for $\Psi_n'$. 
\end{step} 
\noindent
Let $u\in\Box$. Let $\alpha>0$ be sufficiently small so that $v=-\alpha u$ and the origin are in the same region of linearity of $\Psi_n'$. By the concavity of $\Psi_n'$, the line through the points $(0,\Psi_n'(0))$ and $(v,\Psi_n'(v))$ provides an upper bound for $\Psi_n'(u)$. Moreover, this upper bound is maximized by increasing the value for $\Psi_n'(0)$ and decreasing the value for $\Psi_n'(v)$. Thus the upper bound $\Psi_n'(0)\leq1$ and the lower bound for $\Psi_n'(v)$ given in Step 3 provides an upper bound for $\Psi_n'(u)$. 

\begin{step}
Give lower bounds for $\Psi_1',\ldots,\Psi_{n-1}'$.
\end{step}
\noindent
The upper bounds for $\Psi_1',\dots,\Psi_n'$ yield lower bounds for $\Psi_1',\dots,\Psi_{n-1}'$, again by Inequality~(\ref{eq2}).

Having bounded the functions $\Psi_i'$ both from above and below, we may now conclude the proof of the theorem:
\begin{step}
Conclude finiteness by counting possible vertices of graphs.
\end{step}
\noindent
As the vertices of the graph of each $\Psi_i'$ are integral by definition, and the function values here are bounded above and below, there are only finitely many ways to choose the vertices of $\Gamma(\Psi_i')$. Since $\Gamma(\Psi_i')$ is determined by its vertices, there are only finitely many possibilities for the $\Psi_i'$. 

\end{proof}

Combining this result with Theorem \ref{bound} gives the statement of Main Theorem~\ref{thm:2}:

\begin{thm}
\label{fixedBase}
Over any fixed base $\Box$, there are at most finitely many equivalence classes of Fano CDPs.
\end{thm}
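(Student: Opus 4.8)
The plan is to combine the two ingredients already in hand: Theorem~\ref{bound}, which bounds the number of functions in a normalized Fano CDP over $\Box$ by the constant $c(\Box)$, and Theorem~\ref{fixedPn}, which gives finiteness once both the base \emph{and} the number of functions are fixed. Since Theorem~\ref{fixedPn} yields finiteness for each individual value of $n$, and a finite union of finite sets is finite, it suffices to show that every Fano CDP with base $\Box$ is equivalent to one whose number of functions lies in an a priori bounded range depending only on $\Box$.

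First I would argue that every Fano CDP with base $\Box$ is equivalent to a Fano CDP, still with base $\Box$, having at most $c(\Box)$ functions. For a non-toric CDP this is immediate: it is equivalent to a normalized Fano CDP, and the normalization procedure (shearing, translation, and elimination of linear integral-slope functions) never alters the base, so the normalized representative still has base $\Box$; Theorem~\ref{bound} then bounds its number of functions by $c(\Box)$. The toric case must be folded in separately, because Theorem~\ref{bound} applies only to normalized CDPs while a toric CDP typically carries linear functions of integral slope. Here I would use that a toric CDP is by definition equivalent to one with exactly two functions over the same base $\Box$, and observe that $2\le c(\Box)$, since each $\alpha_{e_j}\le 1$ forces $c(\Box)=\sum_{j\le d} 4/\alpha_{e_j}\ge 4d$.

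With this reduction in place, I would conclude by invoking Theorem~\ref{fixedPn} for each integer $n$ with $1\le n\le c(\Box)$: for each such $n$ there are only finitely many equivalence classes of Fano CDPs with base $\Box$ and $n$ functions. As every equivalence class over $\Box$ contains a representative counted by one of these finitely many applications, the total number of equivalence classes is finite.

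The content of the statement is essentially exhausted by the two cited theorems, so there is no deep obstacle; the points requiring care are bookkeeping ones. Specifically, I must verify that passing to a normalized (or two-function) representative genuinely keeps us over the same base $\Box$, since otherwise the fixed-base hypothesis of Theorem~\ref{fixedPn} would be lost, and I must not overlook the toric CDPs, for which the function-count bound of Theorem~\ref{bound} is not directly available and instead follows from the definition of toricity together with the inequality $c(\Box)\ge 2$.
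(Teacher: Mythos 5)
Your proof is correct and follows exactly the paper's route: the paper deduces Theorem~\ref{fixedBase} in a single line by combining Theorem~\ref{fixedPn} with the function-count bound of Theorem~\ref{bound}. The extra bookkeeping you supply---checking that normalization keeps the base fixed, and folding in the toric case via the observation that $2\le c(\Box)$---only makes explicit details the paper leaves implicit.
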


\section{Two-Dimensional Fano $T$-Varieties}\label{sec:surfaces}
In this section, we enumerate the non-toric two-dimensional Fano
CDP equivalence classes:
\begin{thm}\label{thm:2d}
There are exactly 34 equivalence classes of non-toric two-dimensional Fano CDPs, as listed in Tables ~\ref{table:en1}~through~\ref{table:en6}.
\end{thm}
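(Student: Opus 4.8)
The plan is to combine the bound on the number of functions from Theorem~\ref{bound} with a bound on the length of the base interval, reducing the problem to a finite enumeration that can then be pruned using the equivalences of \S\ref{sec:equiv}.

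First I would make the two-dimensional situation concrete. Since $\dim\Box=1$, after a choice of coordinates the base is an interval $\Box=[-p,q]$ with $p,q\in\ZZ$, $p,q\geq 1$ (as $0\in\Box^\circ$), and the base reflection $x\mapsto -x$ lets me assume $p\leq q$. Each translated function $\Psi_i'$ is concave and piecewise linear on $[-p,q]$, with integral graph vertices, and each of its linear pieces lies on a height-one line $y=(1-\alpha x)/\beta$ with $\alpha,\beta\in\ZZ$ and $\beta\neq 0$; in particular every endpoint value $\Psi_i'(-p),\Psi_i'(q)$ is an integer. Because $\pm 1\in\Box$ we have $\alpha_{e_1}=1$, so $c(\Box)=4$ and Theorem~\ref{bound} gives $n\leq 4$; as non-toric forces $n\geq 3$, only $n\in\{3,4\}$ can occur.

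The heart of the argument is to bound $p$ and $q$. The new ingredient beyond \S\ref{sec:Bounds} is Equation~\eqref{eq3}: the facet $\{q\}$ of $\Box$ is at height one exactly when $q=1$, so if $q>1$ then $\sum_i\Psi_i'(q)=n-2$, and likewise $\sum_i\Psi_i'(-p)=n-2$ when $p>1$. Writing $A$ and $B$ for the sets of functions that are, respectively, non-integral and integral-but-non-linear on $[-p,p]$, Lemma~\ref{boundonsum2} applied at $v=p$ together with Inequality~\eqref{eq2} at $\pm p$ yields $|A|+p\,|B|\leq 4$, which already forces $p$ to be small unless $B=\varnothing$. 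The exceptional case is when every $\Psi_i'$ is non-integral: then $\Psi_i'(0)=1/\lambda_i\leq \tfrac12$, so writing $S=\sum_i\Psi_i'$ the interior bound $S(0)>n-2$ combined with $S(0)\leq n/2$ gives $n=3$ and restricts $(\lambda_1,\lambda_2,\lambda_3)$ to the finitely many triples with $\sum_i 1/\lambda_i>1$. To bound $q$ here I would use that a non-integral piece joining the value $1/\lambda_i$ at the origin to an integer value at $x=q$ has height-one slope of absolute value at least $1/\lambda_i$, so large $q$ forces $|\Psi_i'(q)|$ to be large; balancing these magnitudes against the exact identity $\sum_i\Psi_i'(q)=n-2$, the small total drop $S(0)-(n-2)\leq\tfrac12$, and concavity across the origin then excludes all but finitely many $p,q$. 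I expect this base bound, and in particular the exclusion of the a priori infinite all-non-integral family, to be the main obstacle.

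Once the finitely many admissible pairs $(\Box,n)$ are in hand, I would appeal to the effective content of Theorem~\ref{fixedPn}: the estimates in its proof confine the values of each $\Psi_i'$ to an explicit finite range, leaving only finitely many integral-vertex concave graphs to examine, and for each candidate tuple I would check the height-one property and the Fano conditions \eqref{eq2} and \eqref{eq3} directly. Finally I would pass to equivalence classes: using shearing, translation, relabelling, and the base reflection I would put each surviving CDP into a normal representative, eliminate duplicates, and verify that exactly $34$ classes remain, recording them in Tables~\ref{table:en1}--\ref{table:en6}. Performing this last reduction while tracking precisely when two normalized tuples differ only by an allowed shear or reflection is the other point demanding care, since it governs the exact count.
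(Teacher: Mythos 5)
Your outline (bound the number of functions by $4$ via Theorem~\ref{bound}, then bound the base interval, then enumerate and deduplicate) matches the paper's skeleton, but the step you yourself flag as ``the main obstacle'' --- bounding the base --- is exactly where the paper's proof does its real work, and your sketch of it does not go through as written. The inequality $|A|+p\,|B|\leq 4$ is a correct application of Lemma~\ref{boundonsum2}, but it gives no control on $p$ when $B=\varnothing$, and in that all-non-integral case your fallback claim is false: there are \emph{infinitely} many triples $(\lambda_1,\lambda_2,\lambda_3)$ with each $\lambda_i\geq 2$ and $\sum_i 1/\lambda_i>1$, namely the family $(2,2,\lambda)$ for every $\lambda$. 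So the a priori infinite family is not excluded by the argument you give, and the subsequent ``balancing of magnitudes'' to bound $q$ is only a hope, not a proof. The enumeration producing exactly $34$ classes is likewise asserted rather than derived.

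The paper closes this gap by a different and sharper reduction. First, Lemmas~\ref{onenonneg} and~\ref{lemma:it} constrain the pairs $(\Psi_i'(-1),\Psi_i'(1))$ after a shearing normalization, and Proposition~\ref{prop:noint} shows that a non-toric normalized CDP cannot have both $-1$ and $1$ in $\Box^\circ$; hence after reflection $\Box=[-1,m-1]$, which immediately kills the unbounded-$p$ problem. Second, two shape lemmas pin down each $\Psi_i'$ completely (a line of slope $1/\lambda$, or a single bend at $\lambda-1$ followed by the constant $1$), so the possible boundary values $\Psi_i'(m-1)$ form an explicit short list. Substituting these into the height-one identity $\sum_i \Psi_i'(m-1)=n-2$ from Equation~\eqref{eq3} yields a Diophantine equation whose solutions force $m\in\{3,4,6\}$ when $m>2$ (and $n=3$ there, with $n\leq 4$ only for $m=2$). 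If you want to salvage your route, you would need a substitute for Proposition~\ref{prop:noint} --- some argument showing that both endpoints of $\Box$ cannot be far from the origin --- before any finite enumeration can begin.
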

\noindent This result was previously obtained by Huggenberger with other methods \cite{huggenberger,hausen:13a}. 

Our enumeration strategy will include choosing  preferred representatives for each equivalence class.
Throughout this section we assume that CDPs are normalized, as
per Definition~\ref{def:normalized}.
The base $\Box$ must include both $1$ and $-1$, so by Theorem \ref{bound}, our normalized CDP has at most four functions.


\begin{lemma}
\label{onenonneg}
Suppose $\Psi_i'$ is a translated function in a normalized two-dimensional Fano
CDP. Then either
$\Psi_i'(-1)\leq0$ or $\Psi_i'(1)\leq0$.
\end{lemma}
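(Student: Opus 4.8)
The plan is to argue by contradiction: suppose $\Psi_i'(-1)>0$ and $\Psi_i'(1)>0$, and show that $\Psi_i'$ is then forced to be the constant function $1$ on all of $\Box$. This contradicts the normalization hypothesis (Definition~\ref{def:normalized}), since a constant function is linear with integral slope. Throughout I use that the base is one-dimensional, so $\Box=[-p,q]$ with $p,q\in\ZZ_{\geq 1}$ (hence $\pm 1\in\Box$), that $\Psi_i'$ is concave and piecewise-affine with integral vertices, and that each edge of its graph lies at height one.

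First I would record the local structure on $[-1,1]$. By concavity, $\Psi_i'(\pm1)>0$ implies $\Psi_i'>0$ on all of $[-1,1]$, and in particular $\Psi_i'(0)>0$. Since the vertices of $\Gamma(\Psi_i')$ are integral, the only integer in $(-1,1)$ is $0$, so the only possible breakpoint of $\Psi_i'$ in this open interval is at $x=0$. Next I would encode each edge: a height-one edge lies on a line $ax+by=1$ with $a\in\ZZ$ and $b\in\ZZ_{>0}$ (positivity of $b$ coming from concavity, as the hypograph lies in $ax+by\leq1$), and the requirement that the line carry an integral vertex forces $\gcd(a,b)=1$.

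The heart of the argument is a short case analysis on whether $x=0$ is a breakpoint. If it is, then $(0,\Psi_i'(0))$ is an integral vertex, so $\Psi_i'(0)\in\ZZ$; since the height-one edge through this point gives $\Psi_i'(0)=1/b_0$ for a positive integer $b_0$ (as in Lemma~\ref{leq1/2}), we get $0<\Psi_i'(0)\leq1$ and hence $\Psi_i'(0)=1$. Then both adjacent edges satisfy $b=1$ and have integer slopes $-a_L,-a_R$; positivity of $\Psi_i'(\pm1)$ forces $a_L\geq0$ and $a_R\leq0$, while concavity at $0$ forces $a_L\leq a_R$, so $a_L=a_R=0$, contradicting the presence of a breakpoint. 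Hence $\Psi_i'$ is linear on $[-1,1]$, lying on a single height-one line $ax+by=1$; the same sign computation gives $a=0$, and then $\gcd(a,b)=1$ forces $b=1$, so $\Psi_i'\equiv1$ on $[-1,1]$.

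Finally I would propagate this flat top to all of $\Box$. Its right-hand vertex is an integral point $(k,1)$ with $k\geq1$; any edge descending from it would lie on a line $a'x+b'y=1$ with $a'\geq1$ and $b'\geq1$, but then $a'k+b'\geq2>1$, so no such line passes through $(k,1)$. Thus no descending edge exists, and symmetrically on the left, forcing $\Psi_i'\equiv1$ on $\Box$ and contradicting normalization. The main obstacle is precisely this last step: concavity alone permits $\Psi_i'(\pm1)=1$ with the graph bending downward outside $[-1,1]$, and excluding this genuinely requires the interplay of the integral-vertex and height-one conditions (an arithmetically impossible descent from height $1$), rather than any convexity estimate of the kind used in Lemma~\ref{boundonsum2}.
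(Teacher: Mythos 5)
Your proof is correct and follows essentially the same route as the paper's: argue by contradiction, then use the height-one condition on facets together with integrality of vertices, concavity, and positivity at $\pm 1$ to force $\Psi_i'\equiv 1$, contradicting the normalization hypothesis. One small quibble: your stated reason for $b>0$ (``the hypograph lies in $ax+by\leq 1$'') presupposes the sign of $b$; the clean justification is that concavity makes the facet's line a supporting line from above, so $1/b=\ell(0)\geq \Psi_i'(0)>0$.
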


\begin{proof}
  Assume, towards contradiction, that both $\Psi_i'(-1)>0$ and
  $\Psi_i'(1)>0$.

  If $\Psi_i'(0)$ is non-integral, then 0 is not a vertex of
  $\Gamma(\Psi_i')$ and thus $(-1,\Psi_i'(-1))$ and $(1,\Psi_i'(1))$ lie on
  the same facet $\Gamma(\Psi_i')$. By the height one restriction there are
  $u,v\in\mathbb{Z}$ such that $\langle u,v\rangle\cdot\langle 1$,
  $\Psi_i'(-1)\rangle=1$ and $\langle u,v\rangle\cdot\langle
  1$,$\Psi_i'(1)\rangle=-1$. Under the constraints of positivity and
  $u,v\in\mathbb{Z}$, we deduce $ \Psi_i'(-1)= \Psi_i'(1)$, and the facet is a horizontal
  line at $\Psi_i'(0)$. By assumption, $\Psi_i'(0)$ is non-integral,
  and so this facet has a non-integral vertex, which is impossible. 

  If, on the other hand, the value $\Psi_i'(0)$ is integral, then by
  Lemma~\ref{lemma:rem}, $\Psi_i'(0)= 1$. Convexity, and positivity
  force $\Psi_i'(-1)=\Psi_i'(1)=1$. From Lemma~\ref{equiv1} it follows that
  $\Psi_i'\equiv1$, which directly contradicts the hypothesis that 
  $\Psi_i'$ is normalized.
  \end{proof}

\begin{lemma}
\label{lemma:it}
Suppose $\Psi_i'$ is a translated function in a normalized two-dimensional Fano CDP. If $\Psi_i'(1)>-1$, then
$\Psi_i'(-1)\leq 1$. Similarly if $\Psi_i'(-1)>-1$, then
$\Psi_i'(1)\leq1$.
\end{lemma}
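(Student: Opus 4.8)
The plan is to prove the contrapositive of the first implication and then deduce the second by symmetry. Writing $f = \Psi_i'$ for brevity, it suffices to rule out the simultaneous occurrence of $f(1) > -1$ and $f(-1) > 1$. Once this is done, applying the result to the reflected function $x \mapsto f(-x)$—which is again a normalized Fano CDP function, since the base transformation $x\mapsto -x$ fixes the origin and preserves all relevant properties by Remark~\ref{rem:Fanoprop}—interchanges the roles of $-1$ and $1$ and yields the second statement. So I assume $f(1) > -1$ and $f(-1) > 1$ and aim for a contradiction with the normalization hypothesis.

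The main tool is the height-one structure of the facets. Since $f$ is at height one, every facet of $\Gamma(f)$ lies on a line $\mu x + \lambda y = 1$ with $\mu,\lambda\in\ZZ$; as this facet is the non-vertical upper face of the convex region lying below the graph, one has $\lambda\geq 1$, and the facet is the graph of $y = (1-\mu x)/\lambda$ over its region of linearity. I will use two consequences repeatedly: evaluating this line at the $x$-coordinate of a point of the facet recovers $f$ there, while, by concavity, the same line evaluated anywhere else gives an upper bound for $f$.

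First I would examine the facet $F'$ lying over the interval immediately to the right of $-1$, say on the line $\mu' x + \lambda' y = 1$. Since $(-1,f(-1))\in F'$, the hypothesis $f(-1) > 1$ forces $\mu'\geq\lambda'$, while concavity gives $f(1)\leq (1-\mu')/\lambda'$, so $f(1) > -1$ forces $\mu'\leq\lambda'$; hence $\mu'=\lambda'$ and $F'$ has slope $-1$, lying on $x+y = 1/\lambda'$. The left endpoint of $F'$ is a vertex of $\Gamma(f)$, hence an integral point, and substituting it into $x+y=1/\lambda'$ forces $\lambda'=1$; thus $F'$ lies on $x+y=1$ and $f(-1)=2$. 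A symmetric computation with the facet $F$ over the interval immediately to the left of $1$—using $f(1) > -1$ to get $\mu\leq\lambda$ and $f(-1)=2\leq(1+\mu)/\lambda$ to get $\mu\geq 2\lambda-1$—forces $\lambda=1$, $\mu=1$, so $F$ also lies on $x+y=1$.

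Finally, since the supporting line $x+y=1$ meets the graph of $f$ along both $F'$ and $F$, concavity forces $f\equiv 1-x$ on all of $[-1,1]$. It then remains to propagate this outward: any facet adjacent to the integral vertex $(1,0)$ must lie on a line $\mu x + \lambda y = 1$ passing through $(1,0)$, giving $\mu=1$ and slope $-1/\lambda$, whence concavity ($\text{slope}\leq -1$) forces $\lambda=1$ and no bend occurs; iterating along the integral points of $x+y=1$ shows $f\equiv 1-x$ to the right, and a symmetric argument at $(-1,2)$ handles the left. Thus $f$ is globally linear of integral slope $-1$, contradicting the assumption that $\Psi$ is normalized. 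I expect the crux to be the gap in the naive estimate: concavity alone yields only $f(-1)\leq 1+1/\lambda$, and closing this gap requires first exploiting the integrality of the facet vertices to force $\lambda=1$ in the extremal slope-$(-1)$ case, and then playing the two boundary facets against each other until global linearity triggers the normalization hypothesis.
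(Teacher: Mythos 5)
Your proof is correct, but it takes a genuinely different route from the paper's. The paper disposes of this lemma in two lines: if $\Psi_i'(1)>-1$ and $\Psi_i'(-1)>1$, shearing by $u\mapsto \Psi_i'(u)+u$ (compensating on another function to keep the shear admissible) makes both boundary values strictly positive, which contradicts Lemma~\ref{onenonneg}; the second implication is symmetric. You instead work directly with the height-one facet equations $\mu x+\lambda y=1$: the two hypotheses pin the facet through $(-1,f(-1))$ to slope $-1$, integrality of its vertex forces $\lambda'=1$ and hence $f(-1)=2$, the facet at $x=1$ is then also forced onto $x+y=1$, and concavity plus the height-one condition at each integral vertex propagates linearity across all of $\Box$, contradicting the normalization hypothesis that no $\Psi_i$ is linear of integral slope. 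All the individual steps check out — in particular your claims that $\lambda\geq 1$ (since the hypograph is unbounded below and contains the origin because $\Psi_i'(0)>0$) and that facet endpoints are integral vertices are justified by Definitions~\ref{CDP} and~\ref{Fano}. The trade-off: the paper's argument is shorter because it reuses Lemma~\ref{onenonneg} (whose own proof is a height-one computation of exactly the flavour you carry out), whereas your argument is self-contained, bypasses Lemma~\ref{onenonneg} entirely, and identifies the precise extremal configuration $f\equiv 1-x$ that the inequalities exclude — more informative, but redundant given the machinery already established in the paper.
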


\begin{proof}
  If $\Psi_i'(1)>-1$ and $\Psi_i'(1)>1$, or vice versa, then $\Psi_i'$
  may be sheared so that both values are positive, contradicting
  Lemma~\ref{onenonneg}.
\end{proof}

We can use shearing within an equivalence class to determine a preferred value of
the first $n-1$ functions at $-1$, and then use these lemmas to deduce
additional restrictions. First, we shear the functions so that
$\Psi_i'(-1)$ is non-negative and in a preferred range:
\begin{equation}\tag{E1}
	0\leq\Psi_i'(-1)<1,\quad i=1\dots n-1.\label{E1}
\end{equation}
%
Using  Lemma~\ref{lemma:it} we deduce that 
\begin{equation}\tag{E2}\label{E2}
\Psi_i'(1)\leq1, \quad i=1\dots n-1. 
\end{equation}
We can argue about the final function by using the condition of
Inequality~(\ref{eq1}), namely $\sum_{i=1}^n \Psi_i'
> n-2$ on $\Box^\circ$. Consequently, 
\begin{equation}\tag{E3}\label{E3}
-1<\Psi_n'(-1)\quad\text{and}\quad -1\leq \Psi_n'(1).
\end{equation}
The latter inequality is strict if $1$ is in the interior of $\Box$. 
Finally, from this, and Lemma~\ref{lemma:it} we conclude that if $1\in\Box^\circ$,
\begin{equation}\tag{E4}\label{E4}
\Psi_n'(-1)\leq1\quad\text{and}\quad\Psi_n'(1)\leq1.
\end{equation}
These bounds severely restrict the possible base polytope for our representatives.
\begin{prop}
\label{prop:noint}
Under the above restrictions on the form of the functions, there are no normalized two-dimensional non-toric Fano CDPs with a base $\Box$ such that $-1,1\in\Box^\circ$.
\end{prop}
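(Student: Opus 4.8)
The plan is to assume for contradiction that $-1,1\in\Box^\circ$ and to derive incompatible constraints on the translated functions $\Psi_1',\dots,\Psi_n'$. Write $\Box=[a,b]$ with $a,b\in\ZZ$; since $\pm1\in\Box^\circ$ we have $a\le-2$ and $b\ge2$, so $\pm2\in\Box$ and neither facet $\{a\},\{b\}$ of $\Box$ is at height one. By \eqref{eq3} this gives $\sum_i\Psi_i'(a)=\sum_i\Psi_i'(b)=n-2$. First I would pin down $n$. Since the CDP is non-toric, $n\ge3$, while Theorem~\ref{bound} gives $n\le4$. Applying Lemma~\ref{boundonsum2} at $v=1$—no $\Psi_i'$ can be integral and linear on $[-1,1]$, as that would force global linearity by Lemma~\ref{linearalonglines}, contradicting normalization—yields $\Psi_i'(-1)+\Psi_i'(1)\le1$ for every $i$. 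Summing and comparing with the strict interior bound \eqref{eq1} at $\pm1$ gives $2(n-2)<n$, hence $n=3$.

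Next I would record the local shape of each function. If $\Psi_i'$ is non-integral then $\Psi_i'(0)=1/q_i\notin\ZZ$ (Lemmas~\ref{lemma:rem} and \ref{leq1/2}), so $0$ is not a vertex of $\Gamma(\Psi_i')$; thus $\Psi_i'$ is affine on the height-one facet through $0$, whose integral endpoints lie on either side of $0$, and in particular $\Psi_i'$ is linear on $[-1,1]$ with $\Psi_i'(\pm1)=(1\mp p_i)/q_i$ for some $p_i$ with $\gcd(p_i,q_i)=1$. If $\Psi_i'$ is integral then $\Psi_i'(0)=1$ and, by Lemma~\ref{linearalonglines} together with normalization, $\Psi_i'$ is linear on $[a,0]$ and on $[0,b]$ with a genuine downward kink at $0$. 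Using the latter, an integral $\Psi_i'$ is non-linear on $[-2,2]$, so Lemma~\ref{boundonsum2} gives $\Psi_i'(-2)+\Psi_i'(2)\le0$, whereas a non-integral one gives $\le1$. Since $\sum_i(\Psi_i'(-2)+\Psi_i'(2))\ge2$ by \eqref{eq2}, at most one of the three functions is integral.

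Suppose exactly one function is integral. Then the displayed bounds must be tight: the integral term equals $0$ and each non-integral term equals $1$, so $\sum_i\Psi_i'(-2)=\sum_i\Psi_i'(2)=1$. By \eqref{eq1} this is impossible unless $\pm2\in\partial\Box$, forcing $\Box=[-2,2]$; moreover each non-integral function attains equality $\Psi_i'(-2)+\Psi_i'(2)=2\Psi_i'(0)=1$, which by Lemma~\ref{leq1/2} means $\Psi_i'(0)=\tfrac12$ and $\Psi_i'$ is linear on all of $[-2,2]$. But a function at height one with $\Psi_i'(0)=\tfrac12$ has $q_i=2$ and half-integral slope, so the endpoint vertices $(\pm2,\Psi_i'(\pm2))$ of its graph are not integral, contradicting Definition~\ref{CDP}.

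The remaining and most delicate case is when all three functions are non-integral, so each is linear on $[-1,1]$ with $\Psi_i'(0)=1/q_i$ and $\Psi_i'(\pm1)=(1\mp p_i)/q_i$. Here I would exploit the normalized form \eqref{E1}--\eqref{E4}. The strict inequalities $\sum_i\Psi_i'(-1)>1$ and $\sum_i\Psi_i'(1)>1$ sum to $\sum_i 2/q_i\le3$, so both lie in $(1,2)$ and $\sum_i 1/q_i>1$, which forces at least one $q_i=2$. If two of the $q_i$ equal $2$, I would label the third function last; since $q=2$ makes $\Psi_i'(\pm1)$ integral, \eqref{E1} forces $\Psi_1'(-1)=\Psi_2'(-1)=0$, whence $\sum_i\Psi_i'(-1)=\Psi_3'(-1)>1$ contradicts \eqref{E4}. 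Otherwise exactly one $q_i$ equals $2$, and $\sum_i 1/q_i>1$ leaves only the denominator triples $\{2,3,3\},\{2,3,4\},\{2,3,5\}$; for each, the values $(1\mp p_i)/q_i$ allowed by \eqref{E1}--\eqref{E4} form a small explicit set, and a direct check shows $\sum_i\Psi_i'(-1)>1$ and $\sum_i\Psi_i'(1)>1$ cannot hold simultaneously. This finite Diophantine verification—reconciling the strict interior inequalities with the height-one integrality of the values at $\pm1$—is where the real work lies; the earlier steps are comparatively routine.
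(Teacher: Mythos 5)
Your argument is correct, but it takes a genuinely different and substantially longer route than the paper's. The paper's proof is a short pigeonhole argument built entirely on Lemma~\ref{onenonneg}: choosing $u\in\{-1,1\}$ with $\Psi_n'(u)\le 0$, the interior inequality \eqref{eq1} together with \eqref{E1}--\eqref{E2} forces $\Psi_i'(u)>0$ for all $i<n$, hence $\Psi_i'(-u)\le 0$ for all $i<n$ by the same lemma, and then \eqref{eq1} at $-u$ combined with \eqref{E4} gives $n-2<\Psi_n'(-u)\le 1$, i.e.\ $n<3$; no case analysis on integrality, denominators, or the points $\pm 2$ is needed. You never invoke Lemma~\ref{onenonneg}; instead you pin down $n=3$ via Lemma~\ref{boundonsum2}, use the values at $\pm 2$ to show at most one function is integral, kill the one-integral case by a tightness/endpoint-integrality argument, and reduce the all-non-integral case to the denominator triples $\{2,3,3\},\{2,3,4\},\{2,3,5\}$. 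The deferred ``direct check'' does go through, and in fact more cleanly than a brute-force enumeration: since $\gcd(p_i,q_i)=1$ (needed for the facet through $0$ to have integral endpoints), one has $\Psi_i'(\pm1)\in\ZZ$ for $q_i=2$ and $\Psi_i'(\pm1)\in\tfrac{1}{2}\ZZ$ for $q_i=4$, so $\sum_i\Psi_i'(-1)$ lies in $\tfrac{1}{6}\ZZ$, $\tfrac{1}{6}\ZZ$, $\tfrac{1}{15}\ZZ$ in the three cases, while the constraints force it strictly between $1$ and $\tfrac{7}{6}$, $\tfrac{7}{6}$, $\tfrac{16}{15}$ respectively --- intervals containing no such point. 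Two small blemishes: in the one-integral case the phrase ``half-integral slope'' is inaccurate when the normal's first coordinate is even, though the conclusion you actually need ($\Psi_i'(\pm2)=\tfrac{1}{2}\mp u\notin\ZZ$) holds regardless; and since \eqref{E1}--\eqref{E4} depend on which function is labelled last, you should say explicitly that you re-shear after relabelling (harmless, as shearing preserves $\Psi_i'(0)$ and hence the $q_i$). What your approach buys is an explicit description of what the functions would have to look like near $0$ and $\pm 1$; what the paper's buys is brevity and uniformity in $n$.
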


\begin{proof}
  Suppose $\Psi_1',\dots,\Psi_n'$ are the translated functions of a Fano CDP
  $\Psi$ with such a base $\Box$. Assume \eqref{E1}, \eqref{E2}, \eqref{E3}, and \eqref{E4}.  We can
  also assume $\Psi_n'(u)\leq0$ for either $u=-1$ or $u=1$, given Lemma~\ref{onenonneg}. By \eqref{E1} or \eqref{E2}, if there
  is some $j<n$ such that $\Psi_j'(u)\leq0$, then
  inequality~(\ref{eq1}), $n-2<\sum_{i=1}^{n-1}\Psi_i'(u)$, cannot be
  satisfied. Hence $\Psi_i'(u)>0$ for all
  $i=1\dots n-1$, and so by Lemma~\ref{onenonneg}, $\Psi_i'(-u)\leq0$
  for all $i=1\dots n-1$. Combining inequalities~(\ref{eq1}) and \eqref{E4} gives
  $n-2<\Psi_n'(-u)\leq 1$ from which we deduce that~$n<3$. Thus $\Psi$
  is toric.
\end{proof}

We now fix a normalized two-dimensional non-toric Fano CDP $\Psi=(\Psi_1,\ldots,\Psi_n)$ with base $\Box$. 
The base polytope $\Box$ thus has at least one of $-1$ or $1$ as boundary
points. Without loss of generality, in our choice representative we assume that $-1$ is a boundary
point of $\Box$. In the following, we further assume that
$\Psi_1'(-1)=\cdots=\Psi_{n-1}'(-1)=0$ (again, shearing if necessary).

The next parameter that we consider is the length of the base
polytope, which we denote by $m$. If $m=2$, then $\Box=[-1,1]$. As mentioned above, Theorem~\ref{bound} gives the bound
$n\leq 4$. We have listed all two-dimensional Fano CDPs over $\Box=[-1,1]$ with three and
four functions in Tables~\ref{table:en1} and~\ref{table:2},
respectively. If $m>2$, then $1$ is an interior point of $\Box$. The
inequality computed in Theorem~\ref{bound} is strict, which yields a bound $n<4$. In this case ($m>2$), our normalized CDP has exactly three functions
$\Psi_1,\Psi_2,\Psi_3$.

\begin{lemma}
  Suppose that $\Box$ has length larger than two.  If $\Psi_i'(0)$ is
  integral, then $\Psi_i'$ has at most one interior vertex.
\end{lemma}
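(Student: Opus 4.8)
The plan is to reduce the integrality hypothesis to the single normalization $\Psi_i'(0)=1$, and then read the conclusion directly off the linearity statement of Lemma~\ref{linearalonglines}.

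First I would pin down the value at the origin. Property~\ref{cdptres} of Definition~\ref{Fano} gives $\Psi_i'(0)>0$, so if $\Psi_i'(0)$ is an integer then $\Psi_i'(0)\geq 1>\tfrac{1}{2}$. By the contrapositive of Lemma~\ref{leq1/2} this rules out $\Psi_i'$ being non-integral, hence $\Psi_i'$ is integral and Lemma~\ref{lemma:rem} yields $\Psi_i'(0)=1$. Next I would exploit the one-dimensionality of the base: under our normalizing assumptions $\Box$ is an interval, $-1$ is a boundary point, and since its length exceeds two we may write $\Box=[-1,b]$ with $b\geq 2$, so that its boundary is $\{-1,b\}$. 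Applying Lemma~\ref{linearalonglines} to each boundary point then shows that $\Psi_i'$ is affine on $[-1,0]$ and on $[0,b]$.

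Finally I would translate this into the language of vertices. The interior vertices of $\Gamma(\Psi_i')$ are exactly the breakpoints of $\Psi_i'$ lying over the interior lattice points $\{0,1,\dots,b-1\}$ of $\Box$; these sit over integers because $\Gamma(\Psi_i')$ is a polyhedral complex with integral vertices, so its induced subdivision of $\Box$ has integral endpoints. Affineness on $[-1,0]$ and on $[0,b]$ forbids any breakpoint in $(-1,0)\cup(0,b)$, leaving $0$ as the only candidate, and hence $\Psi_i'$ has at most one interior vertex.

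The argument is short, and its only delicate point is the opening reduction from ``$\Psi_i'(0)$ integral'' to ``$\Psi_i'(0)=1$'', where the positivity from the Fano property must be combined with Lemmas~\ref{leq1/2} and~\ref{lemma:rem}; everything after that is a direct application of Lemma~\ref{linearalonglines}. It is worth noting that the hypothesis that $\Box$ has length greater than two is precisely what gives the statement content, since for $\Box=[-1,1]$ the point $0$ is already the only interior lattice point.
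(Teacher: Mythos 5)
Your proof is correct and takes essentially the same approach as the paper, whose entire argument is the one-line observation that Lemma~\ref{linearalonglines} forces the only possible vertex to lie above the origin. You simply make explicit the preliminary reduction from ``$\Psi_i'(0)$ integral'' to ``$\Psi_i'(0)=1$'' via Lemmas~\ref{leq1/2} and~\ref{lemma:rem}, a step the paper leaves implicit.
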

\begin{proof}
  It follows directly from Lemma~\ref{linearalonglines} that the only
  vertex must live above~$0$.
\end{proof}

\begin{lemma}
  Suppose that $\Box = [-1, m-1]$ . For $i=1, 2$,
  if $\Psi_i'(0)$ is non-integral, then up to shearing, either
\begin{enumerate}
\item $\Psi_i'$ 
  has a single vertex at some integer $\lambda-1$ satisfying $1 < \lambda<m$:
\[
\Psi_i'(x)=\begin{cases}
(x+1)/\lambda &x+1\leq \lambda\\
1 & \lambda \leq x+1 \leq m;
\end{cases}
\]
\item or $\Psi_i'$ is simply a line with slope $\lambda \mid m$:
\[   \Psi_i'(x)=(x+1)/\lambda.\]
\end{enumerate}
\end{lemma}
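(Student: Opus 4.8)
The plan is to pin down the shape of $\Gamma(\Psi_i')$ facet by facet, using only that $\Psi_i'$ is concave, at height one, has integral vertices, and — after the shearing normalization already in force — satisfies $\Psi_i'(-1)=0$. Since $\Psi_i'(0)$ is not an integer we have $\Psi_i'(0)\neq 1$, so $\Psi_i$ is non-integral by Lemma~\ref{lemma:rem}, and Lemma~\ref{leq1/2} gives $\Psi_i'(0)=1/\lambda$ for some integer $\lambda\geq 2$. Because $(0,1/\lambda)$ has non-integral second coordinate, $0$ cannot be a vertex of $\Gamma(\Psi_i')$, so it lies in the relative interior of a single facet $F$.

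First I would identify $F$. Its left endpoint is a vertex of $\Gamma(\Psi_i')$, hence has integral $x$-coordinate lying in $[-1,0)$; the only possibility is the boundary vertex $(-1,\Psi_i'(-1))=(-1,0)$. Thus $F$ passes through $(-1,0)$ and $(0,1/\lambda)$, so it has slope $1/\lambda$ and lies on the line $-x+\lambda y=1$, which is at height one. The right endpoint $v_1$ of $F$ is the next vertex; being integral and on this line, $v_1=(k\lambda-1,k)$ for some integer $k\geq 1$. If $v_1$ is the right boundary vertex $m-1$ of $\Box$, then $\Psi_i'$ is the single line $(x+1)/\lambda$ on all of $\Box$ and $m=k\lambda$, which is conclusion~(2) with $\lambda\mid m$. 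Otherwise $v_1$ is an interior vertex, and I must show $k=1$ together with $\Psi_i'\equiv 1$ to the right of $v_1$.

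The crux is a lattice computation at $v_1$. One checks $\gcd(k\lambda-1,k)=1$, so $v_1$ is primitive and the integral normals $n$ of height-one lines through $v_1$ (those with $\langle n,v_1\rangle=1$) form the coset $(-1,\lambda)+t\,(k,1-k\lambda)$, $t\in\ZZ$. Writing $n_t=(a_t,b_t)$, the facet $a_t x+b_t y=1$ has slope $s_t=-a_t/b_t$, and a direct simplification gives $s_t-\tfrac1\lambda=-t/(\lambda b_t)$. For $v_1$ to be a genuine concave vertex there must be a rightward-continuing facet with $s_t<1/\lambda$, i.e. some $t\neq 0$ with $t/b_t>0$; since $b_t=\lambda-t(k\lambda-1)$, this forces a positive integer $t<\lambda/(k\lambda-1)$, which exists precisely when $(k-1)\lambda<1$, i.e. (as $\lambda\geq 2$) when $k=1$. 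For $k=1$ the only admissible value is $t=1$, giving $n_1=(0,1)$, the horizontal line $y=1$ through $v_1=(\lambda-1,1)$.

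Finally I would rule out any further bend: repeating the same normal computation at a hypothetical later vertex $(x_0,1)$ on $y=1$ (again primitive, with $x_0\geq \lambda-1\geq 1$) shows that no height-one line through it has negative slope, so concavity forces $\Psi_i'\equiv 1$ on $[\lambda-1,m-1]$. This is conclusion~(1), with $1<\lambda<m$ coming from $\lambda\geq 2$ and $\lambda-1<m-1$. The main obstacle is the slope computation of the third paragraph; the rest is bookkeeping about where $F$ terminates. It is worth emphasizing that the rigidity here is supplied entirely by the height-one condition on each facet, not by the relations among the three functions of the CDP.
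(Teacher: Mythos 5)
Your proof is correct and follows the same route as the paper: normalize so that $\Psi_i'(-1)=0$, use Lemma~\ref{leq1/2} to get $\Psi_i'(0)=1/\lambda$, and let the height-one condition on the facets force the graph to be either the single line $y=(x+1)/\lambda$ or that line followed by the horizontal facet $y=1$. The paper's own proof is only a three-sentence sketch that asserts the vertex must sit at $(\lambda-1,1)$ and that the continuation must be $\equiv 1$; your explicit computation of the integral normals through $(k\lambda-1,k)$ supplies the justification (ruling out $k\geq 2$ and any further bend) that the paper leaves implicit.
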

\begin{proof}
  We have determined that $\Psi_i'(-1)=0$ and by Lemma~\ref{leq1/2}
  there is a positive integer $\lambda$ so that
  $\Psi_i'(0)=\frac{1}{\lambda}$. Either $\Psi_i'$ is a line, or the
  point $(\lambda-1, 1)$ is a vertex. If it is not a line, then
  $\Psi_i'\equiv 1$ on $[\lambda-1, m-1]$ to ensure that the facet is at height one.
 \end{proof}

The shape of $\Psi_3'$ can similarly be described. The normalization of
the other functions gives that  $\Psi_3'(-1)=1$.  Furthermore, recall that $\Psi_3'(1)>-1$ (by \eqref{E3}). 

Next we consider the possible values at the boundary of the polytope, $m-1$. The
major restricting factor is that the regions of linearity start and end at lattice
points, and the functions remain at height one. We next show that the nature of the
equations limits $m$ to be 3, 4 or 6, and with this information we can construct
all cases by iterating through possible choices of shape, and
$\lambda$ for the three functions.

To bound $m$ we look closer at the inequalities. 
These lemmas determine the possible values for $\Psi_1'(m-1)$,
$\Psi_2'(m-1)$, and $\Psi_3'(m-1)$. For $i=1,2$, 
either $\Psi_i'(m-1)= \frac{m}{\lambda}$ for some  $1<\lambda$ which
divides $m$, or $\Psi_i'(m-1)=\{-k(m-1)+1\}$ for some
non-negative integer~$k$. 

The permissible values for $\Psi_3'(m-1)$ can be found by subtracting
$m-1$ from the permissible values of $\Psi_1'(m-1)$. Moreover, we only
keep the values that imply $\Psi_3'(1)>-1$ and deduce
\[\Psi_3'(m-1)\in\left\{-m+2,-\frac{\mu}{\mu+1}m+1\right\} \]
for some  $0\leq\mu$ and $\mu+1\mid m$.
We substitute these possibilities into Equation~(\ref{eq3}), and
search for integer solutions to
\[ \Psi_1'(m-1)+\Psi_2'(m-1)+\Psi'_3(m-1)=1.\]

We find a finite number of integral solutions to this equation, and
find that in these solutions, $m\in\{3,4,6\}$. We illustrate one of
these computations in the next example.  The equivalence classes of
Fano CDPs with three functions and base of length $3$, $4$, and $6$
are given in Tables~\ref{table:en3}, \ref{table:en4}, and
\ref{table:en6} respectively.
This completes the proof of Theorem \ref{thm:2d} (and Main Theorem \ref{thm:3}).

\begin{example}
Suppose that $\Psi_1'(m-1)=\frac{m}{\lambda_1}$,
  $\Psi_2'(m-1)=\frac{m}{\lambda_2}$, and $\Psi_3'(m-1)=-m+2$. We seek
  integer solutions to the equation
\[ \frac{m}{\lambda_1}+\frac{m}{\lambda_2}-m+2=1. \]

Simplifying to $\frac{m}{\lambda_1}+\frac{m}{\lambda_2}=m-1$, it is easy to see that either $\lambda_1=2$ or $\lambda_2=2$. 
Without loss of generality, assume $\lambda_1=2$.  It follows that $m\mid 2\lambda_2$. Thus $\frac{m}{\lambda_2}=1$ or $\frac{m}{\lambda_2}=2$. With the values for $\frac{m}{\lambda_2}$ and $\lambda_1$ fixed, we are able to solve for $m$. The resulting solutions are $(m,\lambda_1,\lambda_2)\in\{(6,2,3),(4,2,4)\}$.

Enumerating all Fano CDPs satisfying $(m,\lambda_1,\lambda_2)=(6,2,3)$ leads to
all equivalence classes of Fano CDPs given in
Table~\ref{table:en6}. Enumerating all Fano CDPs satisfying
$(m,\lambda_1,\lambda_2)=(4,2,4)$ leads to all equivalence classes of Fano CDPs
given in Table~\ref{table:en4}.
\end{example}

\begin{table}
\centering
\begin{tabular}{c|c|c|c}
Number & $\Psi_1'$ & $\Psi_2'$ & $\Psi_3'$ \\ 
\hline
1 &
\begin{tikzpicture}[baseline=-.65ex,scale=0.5]
\draw[step=1cm,gray,very thin] (-1,-1) grid (1,1);
\draw[gray, line width=1] (-1,0) -- (1,0);
\filldraw[gray] (-1,0) circle (2pt);
\filldraw[gray] (0,0) circle (2pt);
\filldraw[gray] (1,0) circle (2pt);
\draw[black, line width=1] (-1,0)  -- (1,1);
\end{tikzpicture} & \begin{tikzpicture}[baseline=-.65ex,scale=0.5]
\draw[step=1cm,gray,very thin] (-1,-1) grid (1,1);
\draw[gray, line width=1] (-1,0) -- (1,0);
\filldraw[gray] (-1,0) circle (2pt);
\filldraw[gray] (0,0) circle (2pt);
\filldraw[gray] (1,0) circle (2pt);
\draw[black, line width=1] (-1,0) -- (1,1);
\end{tikzpicture} &
\begin{tikzpicture}[baseline=-.65ex,scale=0.5]
\draw[step=1cm,gray,very thin] (-1,-1) grid (1,1);
\draw[gray, line width=1] (-1,0) -- (1,0);
\filldraw[gray] (-1,0) circle (2pt);
\filldraw[gray] (0,0) circle (2pt);
\filldraw[gray] (1,0) circle (2pt);
\draw[black, line width=1] (-1,1) --  (1,0);
\end{tikzpicture} \\
\hline
2 &
\begin{tikzpicture}[baseline=-.65ex,scale=0.5]
\draw[step=1cm,gray,very thin] (-1,-1) grid (1,1);
\draw[gray, line width=1] (-1,0) -- (1,0);
\filldraw[gray] (-1,0) circle (2pt);
\filldraw[gray] (0,0) circle (2pt);
\filldraw[gray] (1,0) circle (2pt);
\draw[black, line width=1] (-1,0)  -- (1,1);
\end{tikzpicture} & \begin{tikzpicture}[baseline=-.65ex,scale=0.5]
\draw[step=1cm,gray,very thin] (-1,-1) grid (1,1);
\draw[gray, line width=1] (-1,0) -- (1,0);
\filldraw[gray] (-1,0) circle (2pt);
\filldraw[gray] (0,0) circle (2pt);
\filldraw[gray] (1,0) circle (2pt);
\draw[black, line width=1] (-1,0) -- (1,1);
\end{tikzpicture} &
\begin{tikzpicture}[baseline=-.65ex,scale=0.5]
\draw[step=1cm,gray,very thin] (-1,-1) grid (1,1);
\draw[gray, line width=1] (-1,0) -- (1,0);
\filldraw[gray] (-1,0) circle (2pt);
\filldraw[gray] (0,0) circle (2pt);
\filldraw[gray] (1,0) circle (2pt);
\draw[black, line width=1] (-1,1) -- (0,1) -- (1,0);
\end{tikzpicture} \\
\hline
3 &
\begin{tikzpicture}[baseline=-.65ex,scale=0.5]
\draw[step=1cm,gray,very thin] (-1,-1) grid (1,1);
\draw[gray, line width=1] (-1,0) -- (1,0);
\filldraw[gray] (-1,0) circle (2pt);
\filldraw[gray] (0,0) circle (2pt);
\filldraw[gray] (1,0) circle (2pt);
\draw[black, line width=1] (-1,0)  -- (1,1);
\end{tikzpicture} & \begin{tikzpicture}[baseline=-.65ex,scale=0.5]
\draw[step=1cm,gray,very thin] (-1,-1) grid (1,1);
\draw[gray, line width=1] (-1,0) -- (1,0);
\filldraw[gray] (-1,0) circle (2pt);
\filldraw[gray] (0,0) circle (2pt);
\filldraw[gray] (1,0) circle (2pt);
\draw[black, line width=1] (-1,0) -- (0,1) -- (1,0);
\end{tikzpicture} &
\begin{tikzpicture}[baseline=-.65ex,scale=0.5]
\draw[step=1cm,gray,very thin] (-1,-1) grid (1,1);
\draw[gray, line width=1] (-1,0) -- (1,0);
\filldraw[gray] (-1,0) circle (2pt);
\filldraw[gray] (0,0) circle (2pt);
\filldraw[gray] (1,0) circle (2pt);
\draw[black, line width=1] (-1,1) --  (1,0);
\end{tikzpicture} \\
\hline
4 &
\begin{tikzpicture}[baseline=-.65ex,scale=0.5]
\draw[step=1cm,gray,very thin] (-1,-1) grid (1,1);
\draw[gray, line width=1] (-1,0) -- (1,0);
\filldraw[gray] (-1,0) circle (2pt);
\filldraw[gray] (0,0) circle (2pt);
\filldraw[gray] (1,0) circle (2pt);
\draw[black, line width=1] (-1,0)  -- (0,1) -- (1,1);
\end{tikzpicture} & \begin{tikzpicture}[baseline=-.65ex,scale=0.5]
\draw[step=1cm,gray,very thin] (-1,-1) grid (1,1);
\draw[gray, line width=1] (-1,0) -- (1,0);
\filldraw[gray] (-1,0) circle (2pt);
\filldraw[gray] (0,0) circle (2pt);
\filldraw[gray] (1,0) circle (2pt);
\draw[black, line width=1] (-1,0) -- (0,1) -- (1,0);
\end{tikzpicture} &
\begin{tikzpicture}[baseline=-.65ex,scale=0.5]
\draw[step=1cm,gray,very thin] (-1,-1) grid (1,1);
\draw[gray, line width=1] (-1,0) -- (1,0);
\filldraw[gray] (-1,0) circle (2pt);
\filldraw[gray] (0,0) circle (2pt);
\filldraw[gray] (1,0) circle (2pt);
\draw[black, line width=1] (-1,1) -- (0,1) --  (1,0);
\end{tikzpicture} \\
\hline
5 &
\begin{tikzpicture}[baseline=-.65ex,scale=0.5]
\draw[step=1cm,gray,very thin] (-1,-1) grid (1,1);
\draw[gray, line width=1] (-1,0) -- (1,0);
\filldraw[gray] (-1,0) circle (2pt);
\filldraw[gray] (0,0) circle (2pt);
\filldraw[gray] (1,0) circle (2pt);
\draw[black, line width=1] (-1,0)  -- (0,1) -- (1,1);
\end{tikzpicture} & \begin{tikzpicture}[baseline=-.65ex,scale=0.5]
\draw[step=1cm,gray,very thin] (-1,-1) grid (1,1);
\draw[gray, line width=1] (-1,0) -- (1,0);
\filldraw[gray] (-1,0) circle (2pt);
\filldraw[gray] (0,0) circle (2pt);
\filldraw[gray] (1,0) circle (2pt);
\draw[black, line width=1] (-1,0) -- (0,1) -- (1,0);
\end{tikzpicture} &
\begin{tikzpicture}[baseline=-.65ex,scale=0.5]
\draw[step=1cm,gray,very thin] (-1,-1) grid (1,1);
\draw[gray, line width=1] (-1,0) -- (1,0);
\filldraw[gray] (-1,0) circle (2pt);
\filldraw[gray] (0,0) circle (2pt);
\filldraw[gray] (1,0) circle (2pt);
\draw[black, line width=1] (-1,1) --  (1,0);
\end{tikzpicture} \\
\hline
6 &
\begin{tikzpicture}[baseline=-.65ex,scale=0.5]
\draw[step=1cm,gray,very thin] (-1,-1) grid (1,1);
\draw[gray, line width=1] (-1,0) -- (1,0);
\filldraw[gray] (-1,0) circle (2pt);
\filldraw[gray] (0,0) circle (2pt);
\filldraw[gray] (1,0) circle (2pt);
\draw[black, line width=1] (-1,0) -- (0,1) -- (1,1);
\end{tikzpicture} & \begin{tikzpicture}[baseline=-.65ex,scale=0.5]
\draw[step=1cm,gray,very thin] (-1,-1) grid (1,1);
\draw[gray, line width=1] (-1,0) -- (1,0);
\filldraw[gray] (-1,0) circle (2pt);
\filldraw[gray] (0,0) circle (2pt);
\filldraw[gray] (1,0) circle (2pt);
\draw[black, line width=1] (-1,0) -- (0,1) -- (1,1);
\end{tikzpicture} & \begin{tikzpicture}[baseline=-.65ex,scale=0.5]
\draw[step=1cm,gray,very thin] (-1,-1) grid (1,1);
\draw[gray, line width=1] (-1,0) -- (1,0);
\filldraw[gray] (-1,0) circle (2pt);
\filldraw[gray] (0,0) circle (2pt);
\filldraw[gray] (1,0) circle (2pt);
\draw[black, line width=1] (-1,1) -- (0,1) -- (1,0);
\end{tikzpicture} \\ 
\hline
7 &
\begin{tikzpicture}[baseline=-.65ex,scale=0.5]
\draw[step=1cm,gray,very thin] (-1,-1) grid (1,1);
\draw[gray, line width=1] (-1,0) -- (1,0);
\filldraw[gray] (-1,0) circle (2pt);
\filldraw[gray] (0,0) circle (2pt);
\filldraw[gray] (1,0) circle (2pt);
\draw[black, line width=1] (-1,0) -- (0,1) -- (1,1);
\end{tikzpicture} & \begin{tikzpicture}[baseline=-.65ex,scale=0.5]
\draw[step=1cm,gray,very thin] (-1,-1) grid (1,1);
\draw[gray, line width=1] (-1,0) -- (1,0);
\filldraw[gray] (-1,0) circle (2pt);
\filldraw[gray] (0,0) circle (2pt);
\filldraw[gray] (1,0) circle (2pt);
\draw[black, line width=1] (-1,0) -- (0,1) -- (1,1);
\end{tikzpicture} &
\begin{tikzpicture}[baseline=-.65ex,scale=0.5]
\draw[step=1cm,gray,very thin] (-1,-1) grid (1,1);
\draw[gray, line width=1] (-1,0) -- (1,0);
\filldraw[gray] (-1,0) circle (2pt);
\filldraw[gray] (0,0) circle (2pt);
\filldraw[gray] (1,0) circle (2pt);
\draw[black, line width=1] (-1,1) -- (1,0);
\end{tikzpicture} 
\end{tabular}
\smallskip

 \caption{Base polytope length $m=2$, number of functions $n=3$}
 \label{table:en1}
\end{table}

\begin{table}
\centering
\begin{tabular}{c|c|c|c|c}
Number & $\Psi_1'$ & $\Psi_2'$ & $\Psi_3'$ & $\Psi_4'$ \\ 
 \hline
 8 &
\begin{tikzpicture}[baseline=-.65ex,scale=0.5]
\draw[step=1cm,gray,very thin] (-1,-1) grid (1,1);
\draw[gray, line width=1] (-1,0) -- (1,0);
\filldraw[gray] (-1,0) circle (2pt);
\filldraw[gray] (0,0) circle (2pt);
\filldraw[gray] (1,0) circle (2pt);
\draw[black, line width=1] (-1,0)  -- (1,1);
\end{tikzpicture} & \begin{tikzpicture}[baseline=-.65ex,scale=0.5]
\draw[step=1cm,gray,very thin] (-1,-1) grid (1,1);
\draw[gray, line width=1] (-1,0) -- (1,0);
\filldraw[gray] (-1,0) circle (2pt);
\filldraw[gray] (0,0) circle (2pt);
\filldraw[gray] (1,0) circle (2pt);
\draw[black, line width=1] (-1,0) -- (1,1);
\end{tikzpicture} & \begin{tikzpicture}[baseline=-.65ex,scale=0.5]
\draw[step=1cm,gray,very thin] (-1,-1) grid (1,1);
\draw[gray, line width=1] (-1,0) -- (1,0);
\filldraw[gray] (-1,0) circle (2pt);
\filldraw[gray] (0,0) circle (2pt);
\filldraw[gray] (1,0) circle (2pt);
\draw[black, line width=1] (-1,0)  -- (1,1);
\end{tikzpicture} &
\begin{tikzpicture}[baseline=-.65ex,scale=0.5]
\draw[step=1cm,gray,very thin] (-1,-1) grid (1,2);
\draw[gray, line width=1] (-1,0) -- (1,0);
\filldraw[gray] (-1,0) circle (2pt);
\filldraw[gray] (0,0) circle (2pt);
\filldraw[gray] (1,0) circle (2pt);
\draw[black, line width=1] (-1,2)  -- (0,1) -- (1,-1);
\end{tikzpicture}\\ 
\hline
 9 &
\begin{tikzpicture}[baseline=-.65ex,scale=0.5]
\draw[step=1cm,gray,very thin] (-1,-1) grid (1,1);
\draw[gray, line width=1] (-1,0) -- (1,0);
\filldraw[gray] (-1,0) circle (2pt);
\filldraw[gray] (0,0) circle (2pt);
\filldraw[gray] (1,0) circle (2pt);
\draw[black, line width=1] (-1,0)  -- (1,1);
\end{tikzpicture} & \begin{tikzpicture}[baseline=-.65ex,scale=0.5]
\draw[step=1cm,gray,very thin] (-1,-1) grid (1,1);
\draw[gray, line width=1] (-1,0) -- (1,0);
\filldraw[gray] (-1,0) circle (2pt);
\filldraw[gray] (0,0) circle (2pt);
\filldraw[gray] (1,0) circle (2pt);
\draw[black, line width=1] (-1,0) -- (1,1);
\end{tikzpicture} & \begin{tikzpicture}[baseline=-.65ex,scale=0.5]
\draw[step=1cm,gray,very thin] (-1,-1) grid (1,1);
\draw[gray, line width=1] (-1,0) -- (1,0);
\filldraw[gray] (-1,0) circle (2pt);
\filldraw[gray] (0,0) circle (2pt);
\filldraw[gray] (1,0) circle (2pt);
\draw[black, line width=1] (-1,0)  -- (0,1) -- (1,1);
\end{tikzpicture} &
\begin{tikzpicture}[baseline=-.65ex,scale=0.5]
\draw[step=1cm,gray,very thin] (-1,-1) grid (1,2);
\draw[gray, line width=1] (-1,0) -- (1,0);
\filldraw[gray] (-1,0) circle (2pt);
\filldraw[gray] (0,0) circle (2pt);
\filldraw[gray] (1,0) circle (2pt);
\draw[black, line width=1] (-1,2) -- (0,1) -- (1,-1);
\end{tikzpicture}\\ 
\hline
 10 &
\begin{tikzpicture}[baseline=-.65ex,scale=0.5]
\draw[step=1cm,gray,very thin] (-1,-1) grid (1,1);
\draw[gray, line width=1] (-1,0) -- (1,0);
\filldraw[gray] (-1,0) circle (2pt);
\filldraw[gray] (0,0) circle (2pt);
\filldraw[gray] (1,0) circle (2pt);
\draw[black, line width=1] (-1,0)  -- (1,1);
\end{tikzpicture} & \begin{tikzpicture}[baseline=-.65ex,scale=0.5]
\draw[step=1cm,gray,very thin] (-1,-1) grid (1,1);
\draw[gray, line width=1] (-1,0) -- (1,0);
\filldraw[gray] (-1,0) circle (2pt);
\filldraw[gray] (0,0) circle (2pt);
\filldraw[gray] (1,0) circle (2pt);
\draw[black, line width=1] (-1,0) -- (0,1) -- (1,1);
\end{tikzpicture} & \begin{tikzpicture}[baseline=-.65ex,scale=0.5]
\draw[step=1cm,gray,very thin] (-1,-1) grid (1,1);
\draw[gray, line width=1] (-1,0) -- (1,0);
\filldraw[gray] (-1,0) circle (2pt);
\filldraw[gray] (0,0) circle (2pt);
\filldraw[gray] (1,0) circle (2pt);
\draw[black, line width=1] (-1,0)  -- (0,1) -- (1,1);
\end{tikzpicture} &
\begin{tikzpicture}[baseline=-.65ex,scale=0.5]
\draw[step=1cm,gray,very thin] (-1,-1) grid (1,2);
\draw[gray, line width=1] (-1,0) -- (1,0);
\filldraw[gray] (-1,0) circle (2pt);
\filldraw[gray] (0,0) circle (2pt);
\filldraw[gray] (1,0) circle (2pt);
\draw[black, line width=1] (-1,2)  -- (0,1) -- (1,-1);
\end{tikzpicture}\\ 
\hline
 11 &
\begin{tikzpicture}[baseline=-.65ex,scale=0.5]
\draw[step=1cm,gray,very thin] (-1,-1) grid (1,1);
\draw[gray, line width=1] (-1,0) -- (1,0);
\filldraw[gray] (-1,0) circle (2pt);
\filldraw[gray] (0,0) circle (2pt);
\filldraw[gray] (1,0) circle (2pt);
\draw[black, line width=1] (-1,0)  -- (0,1) -- (1,1);
\end{tikzpicture} & \begin{tikzpicture}[baseline=-.65ex,scale=0.5]
\draw[step=1cm,gray,very thin] (-1,-1) grid (1,1);
\draw[gray, line width=1] (-1,0) -- (1,0);
\filldraw[gray] (-1,0) circle (2pt);
\filldraw[gray] (0,0) circle (2pt);
\filldraw[gray] (1,0) circle (2pt);
\draw[black, line width=1] (-1,0) -- (0,1) -- (1,1);
\end{tikzpicture} & \begin{tikzpicture}[baseline=-.65ex,scale=0.5]
\draw[step=1cm,gray,very thin] (-1,-1) grid (1,1);
\draw[gray, line width=1] (-1,0) -- (1,0);
\filldraw[gray] (-1,0) circle (2pt);
\filldraw[gray] (0,0) circle (2pt);
\filldraw[gray] (1,0) circle (2pt);
\draw[black, line width=1] (-1,0)  -- (0,1) -- (1,1);
\end{tikzpicture} &
\begin{tikzpicture}[baseline=-.65ex,scale=0.5]
\draw[step=1cm,gray,very thin] (-1,-1) grid (1,2);
\draw[gray, line width=1] (-1,0) -- (1,0);
\filldraw[gray] (-1,0) circle (2pt);
\filldraw[gray] (0,0) circle (2pt);
\filldraw[gray] (1,0) circle (2pt);
\draw[black, line width=1] (-1,2)  -- (0,1) -- (1,-1);
\end{tikzpicture}\\ 
\hline
\end{tabular}
 \caption[]{Base polytope length $m=2$, number of functions $n=4$}
 \label{table:2}
\end{table}

\begin{table}
\centering
\begin{tabular}{c|c|c|c}
Number & $\Psi_1'$ & $\Psi_2'$ & $\Psi_3'$\\ 
 \hline
 12 &
 \begin{tikzpicture}[baseline=-.65ex,scale=0.5]
\draw[step=1cm,gray,very thin] (-1,-1) grid (2,1);
\draw[gray, line width=1] (-1,0) -- (2,0);
\filldraw[gray] (-1,0) circle (2pt);
\filldraw[gray] (0,0) circle (2pt);
\filldraw[gray] (1,0) circle (2pt);
\filldraw[gray] (2,0) circle (2pt);
\draw[black, line width=1] (-1,0) -- (0,1) -- (2,1);
\end{tikzpicture} &
 \begin{tikzpicture}[baseline=-.65ex,scale=0.5]
\draw[step=1cm,gray,very thin] (-1,-1) grid (2,1);
\draw[gray, line width=1] (-1,0) -- (2,0);
\filldraw[gray] (-1,0) circle (2pt);
\filldraw[gray] (0,0) circle (2pt);
\filldraw[gray] (1,0) circle (2pt);
\filldraw[gray] (2,0) circle (2pt);
\draw[black, line width=1] (-1,0) -- (0,1) -- (2,1);
\end{tikzpicture} & 
 \begin{tikzpicture}[baseline=-.65ex,scale=0.5]
\draw[step=1cm,gray,very thin] (-1,-1) grid (2,1);
\draw[gray, line width=1] (-1,0) -- (2,0);
\filldraw[gray] (-1,0) circle (2pt);
\filldraw[gray] (0,0) circle (2pt);
\filldraw[gray] (1,0) circle (2pt);
\filldraw[gray] (2,0) circle (2pt);
\draw[black, line width=1] (-1,1) -- (2,-1);
\end{tikzpicture} \\
 \hline
 13 &
 \begin{tikzpicture}[baseline=-.65ex,scale=0.5]
\draw[step=1cm,gray,very thin] (-1,-1) grid (2,1);
\draw[gray, line width=1] (-1,0) -- (2,0);
\filldraw[gray] (-1,0) circle (2pt);
\filldraw[gray] (0,0) circle (2pt);
\filldraw[gray] (1,0) circle (2pt);
\filldraw[gray] (2,0) circle (2pt);
\draw[black, line width=1] (-1,0) -- (0,1) -- (2,1);
\end{tikzpicture} &
 \begin{tikzpicture}[baseline=-.65ex,scale=0.5]
\draw[step=1cm,gray,very thin] (-1,-1) grid (2,1);
\draw[gray, line width=1] (-1,0) -- (2,0);
\filldraw[gray] (-1,0) circle (2pt);
\filldraw[gray] (0,0) circle (2pt);
\filldraw[gray] (1,0) circle (2pt);
\filldraw[gray] (2,0) circle (2pt);
\draw[black, line width=1] (-1,0) -- (1,1) -- (2,1);
\end{tikzpicture} & 
 \begin{tikzpicture}[baseline=-.65ex,scale=0.5]
\draw[step=1cm,gray,very thin] (-1,-1) grid (2,1);
\draw[gray, line width=1] (-1,0) -- (2,0);
\filldraw[gray] (-1,0) circle (2pt);
\filldraw[gray] (0,0) circle (2pt);
\filldraw[gray] (1,0) circle (2pt);
\filldraw[gray] (2,0) circle (2pt);
\draw[black, line width=1] (-1,1) -- (2,-1);
\end{tikzpicture} \\
 \hline
 14 &
 \begin{tikzpicture}[baseline=-.65ex,scale=0.5]
\draw[step=1cm,gray,very thin] (-1,-1) grid (2,1);
\draw[gray, line width=1] (-1,0) -- (2,0);
\filldraw[gray] (-1,0) circle (2pt);
\filldraw[gray] (0,0) circle (2pt);
\filldraw[gray] (1,0) circle (2pt);
\filldraw[gray] (2,0) circle (2pt);
\draw[black, line width=1] (-1,0) -- (0,1) -- (2,1);
\end{tikzpicture} &
 \begin{tikzpicture}[baseline=-.65ex,scale=0.5]
\draw[step=1cm,gray,very thin] (-1,-1) grid (2,1);
\draw[gray, line width=1] (-1,0) -- (2,0);
\filldraw[gray] (-1,0) circle (2pt);
\filldraw[gray] (0,0) circle (2pt);
\filldraw[gray] (1,0) circle (2pt);
\filldraw[gray] (2,0) circle (2pt);
\draw[black, line width=1] (-1,0) -- (2,1);
\end{tikzpicture} & 
 \begin{tikzpicture}[baseline=-.65ex,scale=0.5]
\draw[step=1cm,gray,very thin] (-1,-1) grid (2,1);
\draw[gray, line width=1] (-1,0) -- (2,0);
\filldraw[gray] (-1,0) circle (2pt);
\filldraw[gray] (0,0) circle (2pt);
\filldraw[gray] (1,0) circle (2pt);
\filldraw[gray] (2,0) circle (2pt);
\draw[black, line width=1] (-1,1) -- (2,-1);
\end{tikzpicture} \\
 \hline
 15 &
 \begin{tikzpicture}[baseline=-.65ex,scale=0.5]
\draw[step=1cm,gray,very thin] (-1,-1) grid (2,1);
\draw[gray, line width=1] (-1,0) -- (2,0);
\filldraw[gray] (-1,0) circle (2pt);
\filldraw[gray] (0,0) circle (2pt);
\filldraw[gray] (1,0) circle (2pt);
\filldraw[gray] (2,0) circle (2pt);
\draw[black, line width=1] (-1,0) -- (1,1) -- (2,1);
\end{tikzpicture} &
 \begin{tikzpicture}[baseline=-.65ex,scale=0.5]
\draw[step=1cm,gray,very thin] (-1,-1) grid (2,1);
\draw[gray, line width=1] (-1,0) -- (2,0);
\filldraw[gray] (-1,0) circle (2pt);
\filldraw[gray] (0,0) circle (2pt);
\filldraw[gray] (1,0) circle (2pt);
\filldraw[gray] (2,0) circle (2pt);
\draw[black, line width=1] (-1,0) -- (1,1) -- (2,1);
\end{tikzpicture} & 
 \begin{tikzpicture}[baseline=-.65ex,scale=0.5]
\draw[step=1cm,gray,very thin] (-1,-1) grid (2,1);
\draw[gray, line width=1] (-1,0) -- (2,0);
\filldraw[gray] (-1,0) circle (2pt);
\filldraw[gray] (0,0) circle (2pt);
\filldraw[gray] (1,0) circle (2pt);
\filldraw[gray] (2,0) circle (2pt);
\draw[black, line width=1] (-1,1) -- (2,-1);
\end{tikzpicture} \\
 \hline
 16 &
 \begin{tikzpicture}[baseline=-.65ex,scale=0.5]
\draw[step=1cm,gray,very thin] (-1,-1) grid (2,1);
\draw[gray, line width=1] (-1,0) -- (2,0);
\filldraw[gray] (-1,0) circle (2pt);
\filldraw[gray] (0,0) circle (2pt);
\filldraw[gray] (1,0) circle (2pt);
\filldraw[gray] (2,0) circle (2pt);
\draw[black, line width=1] (-1,0) -- (1,1) -- (2,1);
\end{tikzpicture} &
 \begin{tikzpicture}[baseline=-.65ex,scale=0.5]
\draw[step=1cm,gray,very thin] (-1,-1) grid (2,1);
\draw[gray, line width=1] (-1,0) -- (2,0);
\filldraw[gray] (-1,0) circle (2pt);
\filldraw[gray] (0,0) circle (2pt);
\filldraw[gray] (1,0) circle (2pt);
\filldraw[gray] (2,0) circle (2pt);
\draw[black, line width=1] (-1,0) -- (2,1);
\end{tikzpicture} & 
 \begin{tikzpicture}[baseline=-.65ex,scale=0.5]
\draw[step=1cm,gray,very thin] (-1,-1) grid (2,1);
\draw[gray, line width=1] (-1,0) -- (2,0);
\filldraw[gray] (-1,0) circle (2pt);
\filldraw[gray] (0,0) circle (2pt);
\filldraw[gray] (1,0) circle (2pt);
\filldraw[gray] (2,0) circle (2pt);
\draw[black, line width=1] (-1,1) -- (2,-1);
\end{tikzpicture} \\
 \hline
 17 &
 \begin{tikzpicture}[baseline=-.65ex,scale=0.5]
\draw[step=1cm,gray,very thin] (-1,-1) grid (2,1);
\draw[gray, line width=1] (-1,0) -- (2,0);
\filldraw[gray] (-1,0) circle (2pt);
\filldraw[gray] (0,0) circle (2pt);
\filldraw[gray] (1,0) circle (2pt);
\filldraw[gray] (2,0) circle (2pt);
\draw[black, line width=1] (-1,0) -- (0,1) -- (2,1);
\end{tikzpicture} &
 \begin{tikzpicture}[baseline=-.65ex,scale=0.5]
\draw[step=1cm,gray,very thin] (-1,-1) grid (2,1);
\draw[gray, line width=1] (-1,0) -- (2,0);
\filldraw[gray] (-1,0) circle (2pt);
\filldraw[gray] (0,0) circle (2pt);
\filldraw[gray] (1,0) circle (2pt);
\filldraw[gray] (2,0) circle (2pt);
\draw[black, line width=1] (-1,0) -- (0,1) -- (2,1);
\end{tikzpicture} & 
 \begin{tikzpicture}[baseline=-.65ex,scale=0.5]
\draw[step=1cm,gray,very thin] (-1,-1) grid (2,1);
\draw[gray, line width=1] (-1,0) -- (2,0);
\filldraw[gray] (-1,0) circle (2pt);
\filldraw[gray] (0,0) circle (2pt);
\filldraw[gray] (1,0) circle (2pt);
\filldraw[gray] (2,0) circle (2pt);
\draw[black, line width=1] (-1,1) -- (0,1) -- (2,-1);
\end{tikzpicture} \\
 \hline
 18 &
 \begin{tikzpicture}[baseline=-.65ex,scale=0.5]
\draw[step=1cm,gray,very thin] (-1,-1) grid (2,1);
\draw[gray, line width=1] (-1,0) -- (2,0);
\filldraw[gray] (-1,0) circle (2pt);
\filldraw[gray] (0,0) circle (2pt);
\filldraw[gray] (1,0) circle (2pt);
\filldraw[gray] (2,0) circle (2pt);
\draw[black, line width=1] (-1,0) -- (0,1) -- (2,1);
\end{tikzpicture} &
 \begin{tikzpicture}[baseline=-.65ex,scale=0.5]
\draw[step=1cm,gray,very thin] (-1,-1) grid (2,1);
\draw[gray, line width=1] (-1,0) -- (2,0);
\filldraw[gray] (-1,0) circle (2pt);
\filldraw[gray] (0,0) circle (2pt);
\filldraw[gray] (1,0) circle (2pt);
\filldraw[gray] (2,0) circle (2pt);
\draw[black, line width=1] (-1,0) -- (1,1) -- (2,1);
\end{tikzpicture} & 
 \begin{tikzpicture}[baseline=-.65ex,scale=0.5]
\draw[step=1cm,gray,very thin] (-1,-1) grid (2,1);
\draw[gray, line width=1] (-1,0) -- (2,0);
\filldraw[gray] (-1,0) circle (2pt);
\filldraw[gray] (0,0) circle (2pt);
\filldraw[gray] (1,0) circle (2pt);
\filldraw[gray] (2,0) circle (2pt);
\draw[black, line width=1] (-1,1) -- (0,1) -- (2,-1);
\end{tikzpicture} \\
 \hline
 19 &
 \begin{tikzpicture}[baseline=-.65ex,scale=0.5]
\draw[step=1cm,gray,very thin] (-1,-1) grid (2,1);
\draw[gray, line width=1] (-1,0) -- (2,0);
\filldraw[gray] (-1,0) circle (2pt);
\filldraw[gray] (0,0) circle (2pt);
\filldraw[gray] (1,0) circle (2pt);
\filldraw[gray] (2,0) circle (2pt);
\draw[black, line width=1] (-1,0) -- (1,1) -- (2,1);
\end{tikzpicture} &
 \begin{tikzpicture}[baseline=-.65ex,scale=0.5]
\draw[step=1cm,gray,very thin] (-1,-1) grid (2,1);
\draw[gray, line width=1] (-1,0) -- (2,0);
\filldraw[gray] (-1,0) circle (2pt);
\filldraw[gray] (0,0) circle (2pt);
\filldraw[gray] (1,0) circle (2pt);
\filldraw[gray] (2,0) circle (2pt);
\draw[black, line width=1] (-1,0) -- (1,1) -- (2,1);
\end{tikzpicture} & 
 \begin{tikzpicture}[baseline=-.65ex,scale=0.5]
\draw[step=1cm,gray,very thin] (-1,-1) grid (2,1);
\draw[gray, line width=1] (-1,0) -- (2,0);
\filldraw[gray] (-1,0) circle (2pt);
\filldraw[gray] (0,0) circle (2pt);
\filldraw[gray] (1,0) circle (2pt);
\filldraw[gray] (2,0) circle (2pt);
\draw[black, line width=1] (-1,1) -- (0,1) -- (2,-1);
\end{tikzpicture} \\
 \hline
 20 &
 \begin{tikzpicture}[baseline=-.65ex,scale=0.5]
\draw[step=1cm,gray,very thin] (-1,-1) grid (2,1);
\draw[gray, line width=1] (-1,0) -- (2,0);
\filldraw[gray] (-1,0) circle (2pt);
\filldraw[gray] (0,0) circle (2pt);
\filldraw[gray] (1,0) circle (2pt);
\filldraw[gray] (2,0) circle (2pt);
\draw[black, line width=1] (-1,0) -- (1,1) -- (2,1);
\end{tikzpicture} &
 \begin{tikzpicture}[baseline=-.65ex,scale=0.5]
\draw[step=1cm,gray,very thin] (-1,-1) grid (2,1);
\draw[gray, line width=1] (-1,0) -- (2,0);
\filldraw[gray] (-1,0) circle (2pt);
\filldraw[gray] (0,0) circle (2pt);
\filldraw[gray] (1,0) circle (2pt);
\filldraw[gray] (2,0) circle (2pt);
\draw[black, line width=1] (-1,0) -- (1,1) -- (2,1);
\end{tikzpicture} & 
 \begin{tikzpicture}[baseline=-.65ex,scale=0.5]
\draw[step=1cm,gray,very thin] (-1,-1) grid (2,1);
\draw[gray, line width=1] (-1,0) -- (2,0);
\filldraw[gray] (-1,0) circle (2pt);
\filldraw[gray] (0,0) circle (2pt);
\filldraw[gray] (1,0) circle (2pt);
\filldraw[gray] (2,0) circle (2pt);
\draw[black, line width=1] (-1,1) -- (1,0) -- (2,-1);
\end{tikzpicture}

\end{tabular}
\caption[]{Base polytope length $m=3$, number of functions $n=3$}
\label{table:en3}
\end{table}

\begin{table}
\centering
\begin{tabular}{c|c|c|c}
Number & $\Psi_1'$ & $\Psi_2'$ & $\Psi_3'$ \\
\hline
21 &
\begin{tikzpicture}[baseline=-.65ex,scale=0.5]
\draw[step=1cm,gray,very thin] (-1,-1) grid (3,1);
\draw[gray, line width=1] (-1,0) -- (3,0);
\filldraw[gray] (-1,0) circle (2pt);
\filldraw[gray] (0,0) circle (2pt);
\filldraw[gray] (1,0) circle (2pt);
\filldraw[gray] (2,0) circle (2pt);
\filldraw[gray] (3,0) circle (2pt);
\draw[black, line width=1] (-1,0) -- (0,1) -- (3,1);
\end{tikzpicture} &
\begin{tikzpicture}[baseline=-.65ex,scale=0.5]
\draw[step=1cm,gray,very thin] (-1,-1) grid (3,1);
\draw[gray, line width=1] (-1,0) -- (3,0);
\filldraw[gray] (-1,0) circle (2pt);
\filldraw[gray] (0,0) circle (2pt);
\filldraw[gray] (1,0) circle (2pt);
\filldraw[gray] (2,0) circle (2pt);
\filldraw[gray] (3,0) circle (2pt);
\draw[black, line width=1] (-1,0) -- (0,1) -- (3,1);
\end{tikzpicture} &
\begin{tikzpicture}[baseline=-.65ex,scale=0.5]
\draw[step=1cm,gray,very thin] (-1,-1) grid (3,1);
\draw[gray, line width=1] (-1,0) -- (3,0);
\filldraw[gray] (-1,0) circle (2pt);
\filldraw[gray] (0,0) circle (2pt);
\filldraw[gray] (1,0) circle (2pt);
\filldraw[gray] (2,0) circle (2pt);
\filldraw[gray] (3,0) circle (2pt);
\draw[black, line width=1] (-1,1) -- (3,-1);
\end{tikzpicture} \\
\hline
22 &
\begin{tikzpicture}[baseline=-.65ex,scale=0.5]
\draw[step=1cm,gray,very thin] (-1,-1) grid (3,1);
\draw[gray, line width=1] (-1,0) -- (3,0);
\filldraw[gray] (-1,0) circle (2pt);
\filldraw[gray] (0,0) circle (2pt);
\filldraw[gray] (1,0) circle (2pt);
\filldraw[gray] (2,0) circle (2pt);
\filldraw[gray] (3,0) circle (2pt);
\draw[black, line width=1] (-1,0) -- (0,1) -- (3,1);
\end{tikzpicture} &
\begin{tikzpicture}[baseline=-.65ex,scale=0.5]
\draw[step=1cm,gray,very thin] (-1,-1) grid (3,1);
\draw[gray, line width=1] (-1,0) -- (3,0);
\filldraw[gray] (-1,0) circle (2pt);
\filldraw[gray] (0,0) circle (2pt);
\filldraw[gray] (1,0) circle (2pt);
\filldraw[gray] (2,0) circle (2pt);
\filldraw[gray] (3,0) circle (2pt);
\draw[black, line width=1] (-1,0) -- (1,1) -- (3,1);
\end{tikzpicture} &
\begin{tikzpicture}[baseline=-.65ex,scale=0.5]
\draw[step=1cm,gray,very thin] (-1,-1) grid (3,1);
\draw[gray, line width=1] (-1,0) -- (3,0);
\filldraw[gray] (-1,0) circle (2pt);
\filldraw[gray] (0,0) circle (2pt);
\filldraw[gray] (1,0) circle (2pt);
\filldraw[gray] (2,0) circle (2pt);
\filldraw[gray] (3,0) circle (2pt);
\draw[black, line width=1] (-1,1) -- (3,-1);
\end{tikzpicture} \\
\hline
23&
\begin{tikzpicture}[baseline=-.65ex,scale=0.5]
\draw[step=1cm,gray,very thin] (-1,-1) grid (3,1);
\draw[gray, line width=1] (-1,0) -- (3,0);
\filldraw[gray] (-1,0) circle (2pt);
\filldraw[gray] (0,0) circle (2pt);
\filldraw[gray] (1,0) circle (2pt);
\filldraw[gray] (2,0) circle (2pt);
\filldraw[gray] (3,0) circle (2pt);
\draw[black, line width=1] (-1,0) -- (0,1) -- (3,1);
\end{tikzpicture} &
\begin{tikzpicture}[baseline=-.65ex,scale=0.5]
\draw[step=1cm,gray,very thin] (-1,-1) grid (3,1);
\draw[gray, line width=1] (-1,0) -- (3,0);
\filldraw[gray] (-1,0) circle (2pt);
\filldraw[gray] (0,0) circle (2pt);
\filldraw[gray] (1,0) circle (2pt);
\filldraw[gray] (2,0) circle (2pt);
\filldraw[gray] (3,0) circle (2pt);
\draw[black, line width=1] (-1,0) -- (2,1) -- (3,1);
\end{tikzpicture} &
\begin{tikzpicture}[baseline=-.65ex,scale=0.5]
\draw[step=1cm,gray,very thin] (-1,-1) grid (3,1);
\draw[gray, line width=1] (-1,0) -- (3,0);
\filldraw[gray] (-1,0) circle (2pt);
\filldraw[gray] (0,0) circle (2pt);
\filldraw[gray] (1,0) circle (2pt);
\filldraw[gray] (2,0) circle (2pt);
\filldraw[gray] (3,0) circle (2pt);
\draw[black, line width=1] (-1,1) -- (3,-1);
\end{tikzpicture} \\
\hline
24 &
\begin{tikzpicture}[baseline=-.65ex,scale=0.5]
\draw[step=1cm,gray,very thin] (-1,-1) grid (3,1);
\draw[gray, line width=1] (-1,0) -- (3,0);
\filldraw[gray] (-1,0) circle (2pt);
\filldraw[gray] (0,0) circle (2pt);
\filldraw[gray] (1,0) circle (2pt);
\filldraw[gray] (2,0) circle (2pt);
\filldraw[gray] (3,0) circle (2pt);
\draw[black, line width=1] (-1,0) -- (0,1) -- (3,1);
\end{tikzpicture} &
\begin{tikzpicture}[baseline=-.65ex,scale=0.5]
\draw[step=1cm,gray,very thin] (-1,-1) grid (3,1);
\draw[gray, line width=1] (-1,0) -- (3,0);
\filldraw[gray] (-1,0) circle (2pt);
\filldraw[gray] (0,0) circle (2pt);
\filldraw[gray] (1,0) circle (2pt);
\filldraw[gray] (2,0) circle (2pt);
\filldraw[gray] (3,0) circle (2pt);
\draw[black, line width=1] (-1,0) -- (3,1);
\end{tikzpicture} &
\begin{tikzpicture}[baseline=-.65ex,scale=0.5]
\draw[step=1cm,gray,very thin] (-1,-1) grid (3,1);
\draw[gray, line width=1] (-1,0) -- (3,0);
\filldraw[gray] (-1,0) circle (2pt);
\filldraw[gray] (0,0) circle (2pt);
\filldraw[gray] (1,0) circle (2pt);
\filldraw[gray] (2,0) circle (2pt);
\filldraw[gray] (3,0) circle (2pt);
\draw[black, line width=1] (-1,1) -- (3,-1);
\end{tikzpicture} \\

\hline
25 &
\begin{tikzpicture}[baseline=-.65ex,scale=0.5]
\draw[step=1cm,gray,very thin] (-1,-1) grid (3,1);
\draw[gray, line width=1] (-1,0) -- (3,0);
\filldraw[gray] (-1,0) circle (2pt);
\filldraw[gray] (0,0) circle (2pt);
\filldraw[gray] (1,0) circle (2pt);
\filldraw[gray] (2,0) circle (2pt);
\filldraw[gray] (3,0) circle (2pt);
\draw[black, line width=1] (-1,0) -- (1,1) -- (3,1);
\end{tikzpicture} &
\begin{tikzpicture}[baseline=-.65ex,scale=0.5]
\draw[step=1cm,gray,very thin] (-1,-1) grid (3,1);
\draw[gray, line width=1] (-1,0) -- (3,0);
\filldraw[gray] (-1,0) circle (2pt);
\filldraw[gray] (0,0) circle (2pt);
\filldraw[gray] (1,0) circle (2pt);
\filldraw[gray] (2,0) circle (2pt);
\filldraw[gray] (3,0) circle (2pt);
\draw[black, line width=1] (-1,0) -- (1,1) -- (3,1);
\end{tikzpicture} &
\begin{tikzpicture}[baseline=-.65ex,scale=0.5]
\draw[step=1cm,gray,very thin] (-1,-1) grid (3,1);
\draw[gray, line width=1] (-1,0) -- (3,0);
\filldraw[gray] (-1,0) circle (2pt);
\filldraw[gray] (0,0) circle (2pt);
\filldraw[gray] (1,0) circle (2pt);
\filldraw[gray] (2,0) circle (2pt);
\filldraw[gray] (3,0) circle (2pt);
\draw[black, line width=1] (-1,1) -- (3,-1);
\end{tikzpicture} \\
\hline
26 &
\begin{tikzpicture}[baseline=-.65ex,scale=0.5]
\draw[step=1cm,gray,very thin] (-1,-1) grid (3,1);
\draw[gray, line width=1] (-1,0) -- (3,0);
\filldraw[gray] (-1,0) circle (2pt);
\filldraw[gray] (0,0) circle (2pt);
\filldraw[gray] (1,0) circle (2pt);
\filldraw[gray] (2,0) circle (2pt);
\filldraw[gray] (3,0) circle (2pt);
\draw[black, line width=1] (-1,0) -- (1,1) -- (3,1);
\end{tikzpicture} &
\begin{tikzpicture}[baseline=-.65ex,scale=0.5]
\draw[step=1cm,gray,very thin] (-1,-1) grid (3,1);
\draw[gray, line width=1] (-1,0) -- (3,0);
\filldraw[gray] (-1,0) circle (2pt);
\filldraw[gray] (0,0) circle (2pt);
\filldraw[gray] (1,0) circle (2pt);
\filldraw[gray] (2,0) circle (2pt);
\filldraw[gray] (3,0) circle (2pt);
\draw[black, line width=1] (-1,0) -- (2,1) -- (3,1);
\end{tikzpicture} &
\begin{tikzpicture}[baseline=-.65ex,scale=0.5]
\draw[step=1cm,gray,very thin] (-1,-1) grid (3,1);
\draw[gray, line width=1] (-1,0) -- (3,0);
\filldraw[gray] (-1,0) circle (2pt);
\filldraw[gray] (0,0) circle (2pt);
\filldraw[gray] (1,0) circle (2pt);
\filldraw[gray] (2,0) circle (2pt);
\filldraw[gray] (3,0) circle (2pt);
\draw[black, line width=1] (-1,1) -- (3,-1);
\end{tikzpicture} \\
\hline
27 &
\begin{tikzpicture}[baseline=-.65ex,scale=0.5]
\draw[step=1cm,gray,very thin] (-1,-1) grid (3,1);
\draw[gray, line width=1] (-1,0) -- (3,0);
\filldraw[gray] (-1,0) circle (2pt);
\filldraw[gray] (0,0) circle (2pt);
\filldraw[gray] (1,0) circle (2pt);
\filldraw[gray] (2,0) circle (2pt);
\filldraw[gray] (3,0) circle (2pt);
\draw[black, line width=1] (-1,0) -- (1,1) -- (3,1);
\end{tikzpicture} &
\begin{tikzpicture}[baseline=-.65ex,scale=0.5]
\draw[step=1cm,gray,very thin] (-1,-1) grid (3,1);
\draw[gray, line width=1] (-1,0) -- (3,0);
\filldraw[gray] (-1,0) circle (2pt);
\filldraw[gray] (0,0) circle (2pt);
\filldraw[gray] (1,0) circle (2pt);
\filldraw[gray] (2,0) circle (2pt);
\filldraw[gray] (3,0) circle (2pt);
\draw[black, line width=1] (-1,0) -- (3,1);
\end{tikzpicture} &
\begin{tikzpicture}[baseline=-.65ex,scale=0.5]
\draw[step=1cm,gray,very thin] (-1,-1) grid (3,1);
\draw[gray, line width=1] (-1,0) -- (3,0);
\filldraw[gray] (-1,0) circle (2pt);
\filldraw[gray] (0,0) circle (2pt);
\filldraw[gray] (1,0) circle (2pt);
\filldraw[gray] (2,0) circle (2pt);
\filldraw[gray] (3,0) circle (2pt);
\draw[black, line width=1] (-1,1) -- (3,-1);
\end{tikzpicture} \\
\hline
28 &
\begin{tikzpicture}[baseline=-.65ex,scale=0.5]
\draw[step=1cm,gray,very thin] (-1,-1) grid (3,1);
\draw[gray, line width=1] (-1,0) -- (3,0);
\filldraw[gray] (-1,0) circle (2pt);
\filldraw[gray] (0,0) circle (2pt);
\filldraw[gray] (1,0) circle (2pt);
\filldraw[gray] (2,0) circle (2pt);
\filldraw[gray] (3,0) circle (2pt);
\draw[black, line width=1] (-1,0) -- (2,1) -- (3,1);
\end{tikzpicture} &
\begin{tikzpicture}[baseline=-.65ex,scale=0.5]
\draw[step=1cm,gray,very thin] (-1,-1) grid (3,1);
\draw[gray, line width=1] (-1,0) -- (3,0);
\filldraw[gray] (-1,0) circle (2pt);
\filldraw[gray] (0,0) circle (2pt);
\filldraw[gray] (1,0) circle (2pt);
\filldraw[gray] (2,0) circle (2pt);
\filldraw[gray] (3,0) circle (2pt);
\draw[black, line width=1] (-1,0) -- (2,1) -- (3,1);
\end{tikzpicture} &
\begin{tikzpicture}[baseline=-.65ex,scale=0.5]
\draw[step=1cm,gray,very thin] (-1,-1) grid (3,1);
\draw[gray, line width=1] (-1,0) -- (3,0);
\filldraw[gray] (-1,0) circle (2pt);
\filldraw[gray] (0,0) circle (2pt);
\filldraw[gray] (1,0) circle (2pt);
\filldraw[gray] (2,0) circle (2pt);
\filldraw[gray] (3,0) circle (2pt);
\draw[black, line width=1] (-1,1) -- (3,-1);
\end{tikzpicture} \\
\hline
29 &
\begin{tikzpicture}[baseline=-.65ex,scale=0.5]
\draw[step=1cm,gray,very thin] (-1,-1) grid (3,1);
\draw[gray, line width=1] (-1,0) -- (3,0);
\filldraw[gray] (-1,0) circle (2pt);
\filldraw[gray] (0,0) circle (2pt);
\filldraw[gray] (1,0) circle (2pt);
\filldraw[gray] (2,0) circle (2pt);
\filldraw[gray] (3,0) circle (2pt);
\draw[black, line width=1] (-1,0) -- (2,1) -- (3,1);
\end{tikzpicture} &
\begin{tikzpicture}[baseline=-.65ex,scale=0.5]
\draw[step=1cm,gray,very thin] (-1,-1) grid (3,1);
\draw[gray, line width=1] (-1,0) -- (3,0);
\filldraw[gray] (-1,0) circle (2pt);
\filldraw[gray] (0,0) circle (2pt);
\filldraw[gray] (1,0) circle (2pt);
\filldraw[gray] (2,0) circle (2pt);
\filldraw[gray] (3,0) circle (2pt);
\draw[black, line width=1] (-1,0) -- (3,1);
\end{tikzpicture} &
\begin{tikzpicture}[baseline=-.65ex,scale=0.5]
\draw[step=1cm,gray,very thin] (-1,-1) grid (3,1);
\draw[gray, line width=1] (-1,0) -- (3,0);
\filldraw[gray] (-1,0) circle (2pt);
\filldraw[gray] (0,0) circle (2pt);
\filldraw[gray] (1,0) circle (2pt);
\filldraw[gray] (2,0) circle (2pt);
\filldraw[gray] (3,0) circle (2pt);
\draw[black, line width=1] (-1,1) -- (3,-1);
\end{tikzpicture} \\
\end{tabular}
\caption[]{Base polytope length $m=4$, number of functions $n=3$}
\label{table:en4}
\end{table}

\begin{table}
\centering
\begin{tabular}{c|c|c|c}
Number & $\Psi_1'$ & $\Psi_2'$ & $\Psi_3'$ \\
\hline
30 & 
\begin{tikzpicture}[baseline=-.65ex,scale=0.4]
\draw[step=1cm,gray,very thin] (-1,-2) grid (5,2);
\draw[gray, line width=1] (-1,0) -- (5,0);
\filldraw[gray] (-1,0) circle (2pt);
\filldraw[gray] (0,0) circle (2pt);
\filldraw[gray] (1,0) circle (2pt);
\filldraw[gray] (2,0) circle (2pt);
\filldraw[gray] (3,0) circle (2pt);
\filldraw[gray] (4,0) circle (2pt);
\filldraw[gray] (5,0) circle (2pt);
\draw[black, line width=1] (-1,0) -- (0,1) -- (5,1);
\end{tikzpicture} &
\begin{tikzpicture}[baseline=-.65ex,scale=0.4]
\draw[step=1cm,gray,very thin] (-1,-2) grid (5,2);
\draw[gray, line width=1] (-1,0) -- (5,0);
\filldraw[gray] (-1,0) circle (2pt);
\filldraw[gray] (0,0) circle (2pt);
\filldraw[gray] (1,0) circle (2pt);
\filldraw[gray] (2,0) circle (2pt);
\filldraw[gray] (3,0) circle (2pt);
\filldraw[gray] (4,0) circle (2pt);
\filldraw[gray] (5,0) circle (2pt);
\draw[black, line width=1] (-1,0) -- (5,2);
\end{tikzpicture} &
\begin{tikzpicture}[baseline=-.65ex,scale=0.4]
\draw[step=1cm,gray,very thin] (-1,-2) grid (5,2);
\draw[gray, line width=1] (-1,0) -- (5,0);
\filldraw[gray] (-1,0) circle (2pt);
\filldraw[gray] (0,0) circle (2pt);
\filldraw[gray] (1,0) circle (2pt);
\filldraw[gray] (2,0) circle (2pt);
\filldraw[gray] (3,0) circle (2pt);
\filldraw[gray] (4,0) circle (2pt);
\filldraw[gray] (5,0) circle (2pt);
\draw[black, line width=1] (-1,1) -- (5,-2);
\end{tikzpicture} \\
\hline
31 & 
\begin{tikzpicture}[baseline=-.65ex,scale=0.4]
\draw[step=1cm,gray,very thin] (-1,-2) grid (5,2);
\draw[gray, line width=1] (-1,0) -- (5,0);
\filldraw[gray] (-1,0) circle (2pt);
\filldraw[gray] (0,0) circle (2pt);
\filldraw[gray] (1,0) circle (2pt);
\filldraw[gray] (2,0) circle (2pt);
\filldraw[gray] (3,0) circle (2pt);
\filldraw[gray] (4,0) circle (2pt);
\filldraw[gray] (5,0) circle (2pt);
\draw[black, line width=1] (-1,0) -- (1,1) -- (5,1);
\end{tikzpicture} &
\begin{tikzpicture}[baseline=-.65ex,scale=0.4]
\draw[step=1cm,gray,very thin] (-1,-2) grid (5,2);
\draw[gray, line width=1] (-1,0) -- (5,0);
\filldraw[gray] (-1,0) circle (2pt);
\filldraw[gray] (0,0) circle (2pt);
\filldraw[gray] (1,0) circle (2pt);
\filldraw[gray] (2,0) circle (2pt);
\filldraw[gray] (3,0) circle (2pt);
\filldraw[gray] (4,0) circle (2pt);
\filldraw[gray] (5,0) circle (2pt);
\draw[black, line width=1] (-1,0) -- (5,2);
\end{tikzpicture} &
\begin{tikzpicture}[baseline=-.65ex,scale=0.4]
\draw[step=1cm,gray,very thin] (-1,-2) grid (5,2);
\draw[gray, line width=1] (-1,0) -- (5,0);
\filldraw[gray] (-1,0) circle (2pt);
\filldraw[gray] (0,0) circle (2pt);
\filldraw[gray] (1,0) circle (2pt);
\filldraw[gray] (2,0) circle (2pt);
\filldraw[gray] (3,0) circle (2pt);
\filldraw[gray] (4,0) circle (2pt);
\filldraw[gray] (5,0) circle (2pt);
\draw[black, line width=1] (-1,1) -- (5,-2);
\end{tikzpicture} \\
\hline
32 & 
\begin{tikzpicture}[baseline=-.65ex,scale=0.4]
\draw[step=1cm,gray,very thin] (-1,-2) grid (5,2);
\draw[gray, line width=1] (-1,0) -- (5,0);
\filldraw[gray] (-1,0) circle (2pt);
\filldraw[gray] (0,0) circle (2pt);
\filldraw[gray] (1,0) circle (2pt);
\filldraw[gray] (2,0) circle (2pt);
\filldraw[gray] (3,0) circle (2pt);
\filldraw[gray] (4,0) circle (2pt);
\filldraw[gray] (5,0) circle (2pt);
\draw[black, line width=1] (-1,0) -- (2,1) -- (5,1);
\end{tikzpicture} &
\begin{tikzpicture}[baseline=-.65ex,scale=0.4]
\draw[step=1cm,gray,very thin] (-1,-2) grid (5,2);
\draw[gray, line width=1] (-1,0) -- (5,0);
\filldraw[gray] (-1,0) circle (2pt);
\filldraw[gray] (0,0) circle (2pt);
\filldraw[gray] (1,0) circle (2pt);
\filldraw[gray] (2,0) circle (2pt);
\filldraw[gray] (3,0) circle (2pt);
\filldraw[gray] (4,0) circle (2pt);
\filldraw[gray] (5,0) circle (2pt);
\draw[black, line width=1] (-1,0) -- (5,2);
\end{tikzpicture} &
\begin{tikzpicture}[baseline=-.65ex,scale=0.4]
\draw[step=1cm,gray,very thin] (-1,-2) grid (5,2);
\draw[gray, line width=1] (-1,0) -- (5,0);
\filldraw[gray] (-1,0) circle (2pt);
\filldraw[gray] (0,0) circle (2pt);
\filldraw[gray] (1,0) circle (2pt);
\filldraw[gray] (2,0) circle (2pt);
\filldraw[gray] (3,0) circle (2pt);
\filldraw[gray] (4,0) circle (2pt);
\filldraw[gray] (5,0) circle (2pt);
\draw[black, line width=1] (-1,1) -- (5,-2);
\end{tikzpicture} \\
\hline
33 & 
\begin{tikzpicture}[baseline=-.65ex,scale=0.4]
\draw[step=1cm,gray,very thin] (-1,-2) grid (5,2);
\draw[gray, line width=1] (-1,0) -- (5,0);
\filldraw[gray] (-1,0) circle (2pt);
\filldraw[gray] (0,0) circle (2pt);
\filldraw[gray] (1,0) circle (2pt);
\filldraw[gray] (2,0) circle (2pt);
\filldraw[gray] (3,0) circle (2pt);
\filldraw[gray] (4,0) circle (2pt);
\filldraw[gray] (5,0) circle (2pt);
\draw[black, line width=1] (-1,0) -- (3,1) -- (5,1);
\end{tikzpicture} &
\begin{tikzpicture}[baseline=-.65ex,scale=0.4]
\draw[step=1cm,gray,very thin] (-1,-2) grid (5,2);
\draw[gray, line width=1] (-1,0) -- (5,0);
\filldraw[gray] (-1,0) circle (2pt);
\filldraw[gray] (0,0) circle (2pt);
\filldraw[gray] (1,0) circle (2pt);
\filldraw[gray] (2,0) circle (2pt);
\filldraw[gray] (3,0) circle (2pt);
\filldraw[gray] (4,0) circle (2pt);
\filldraw[gray] (5,0) circle (2pt);
\draw[black, line width=1] (-1,0) -- (5,2);
\end{tikzpicture} &
\begin{tikzpicture}[baseline=-.65ex,scale=0.4]
\draw[step=1cm,gray,very thin] (-1,-2) grid (5,2);
\draw[gray, line width=1] (-1,0) -- (5,0);
\filldraw[gray] (-1,0) circle (2pt);
\filldraw[gray] (0,0) circle (2pt);
\filldraw[gray] (1,0) circle (2pt);
\filldraw[gray] (2,0) circle (2pt);
\filldraw[gray] (3,0) circle (2pt);
\filldraw[gray] (4,0) circle (2pt);
\filldraw[gray] (5,0) circle (2pt);
\draw[black, line width=1] (-1,1) -- (5,-2);
\end{tikzpicture} \\
\hline
34 & 
\begin{tikzpicture}[baseline=-.65ex,scale=0.4]
\draw[step=1cm,gray,very thin] (-1,-2) grid (5,2);
\draw[gray, line width=1] (-1,0) -- (5,0);
\filldraw[gray] (-1,0) circle (2pt);
\filldraw[gray] (0,0) circle (2pt);
\filldraw[gray] (1,0) circle (2pt);
\filldraw[gray] (2,0) circle (2pt);
\filldraw[gray] (3,0) circle (2pt);
\filldraw[gray] (4,0) circle (2pt);
\filldraw[gray] (5,0) circle (2pt);
\draw[black, line width=1] (-1,0) -- (4,1) -- (5,1);
\end{tikzpicture} &
\begin{tikzpicture}[baseline=-.65ex,scale=0.4]
\draw[step=1cm,gray,very thin] (-1,-2) grid (5,2);
\draw[gray, line width=1] (-1,0) -- (5,0);
\filldraw[gray] (-1,0) circle (2pt);
\filldraw[gray] (0,0) circle (2pt);
\filldraw[gray] (1,0) circle (2pt);
\filldraw[gray] (2,0) circle (2pt);
\filldraw[gray] (3,0) circle (2pt);
\filldraw[gray] (4,0) circle (2pt);
\filldraw[gray] (5,0) circle (2pt);
\draw[black, line width=1] (-1,0) -- (5,2);
\end{tikzpicture} &
\begin{tikzpicture}[baseline=-.65ex,scale=0.4]
\draw[step=1cm,gray,very thin] (-1,-2) grid (5,2);
\draw[gray, line width=1] (-1,0) -- (5,0);
\filldraw[gray] (-1,0) circle (2pt);
\filldraw[gray] (0,0) circle (2pt);
\filldraw[gray] (1,0) circle (2pt);
\filldraw[gray] (2,0) circle (2pt);
\filldraw[gray] (3,0) circle (2pt);
\filldraw[gray] (4,0) circle (2pt);
\filldraw[gray] (5,0) circle (2pt);
\draw[black, line width=1] (-1,1) -- (5,-2);
\end{tikzpicture} \\

\end{tabular}
\caption[]{Base polytope length $m=6$, number of functions $n=3$}
\label{table:en6}
\end{table}

\section{Three Dimensional Fano $T$-Varieties}\label{sec:threefolds}
For a fixed polytope, the possible functions in a CDP are quite
restricted by the regions of linearity that the polytope supports, and
conditions of integrality at lattice points. In the three dimension
case these ideas are sufficient to determine a global bound on the
number of functions from Theorem~\ref{bound}.

\begin{thm}\label{thm:3dbound}
Every normalized Fano CDP with two-dimensional base has at most $8$ 
  functions. Furthermore, this bound is sharp in the sense that there exist normalized Fano CDPs with two-dimensional base and $8$ functions.
\end{thm}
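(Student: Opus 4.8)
Sharpness is immediate: the two-dimensional cross-polytope of Example~\ref{ex:cross} (the case $d=2$) has $c(\Box)=8$ and carries a normalized Fano CDP with $8$ nonlinear functions, so the bound is attained. For the upper bound I would first split the functions of a normalized Fano CDP $\Psi=(\Psi_1,\dots,\Psi_n)$ into the non-integral ones and the integral ones. Since every normalized function with integral $\Psi_i'(0)=1$ that is linear is excluded by normalization, every integral function is nonlinear, and by Lemma~\ref{linearalonglines} is \emph{conical}: linear along each ray from the origin. The non-integral functions are easy to control: evaluating Inequality~\eqref{eq1} at $0\in\Box^\circ$ gives $\sum_i\Psi_i'(0)>n-2$, i.e. $\sum_i(1-\Psi_i'(0))<2$, and by Lemma~\ref{leq1/2} each non-integral function contributes at least $\tfrac12$ to the left-hand side. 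Hence there are at most $3$ non-integral functions, in any dimension.

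\textbf{Encoding the integral functions.} The plan is to bound the number $q$ of integral (hence conical, nonlinear) functions. Writing each such $\Psi_i'$ as $\Psi_i'(x)=1+\min_{a\in Q_i}\langle x,a\rangle$, the height-one property forces the gradients to be integral, so $Q_i\subset M^*$ is a lattice polygon with $\dim Q_i\ge 1$, well-defined up to lattice translation (translation of $Q_i$ is exactly shearing of $\Psi_i'$). The boundary estimate~\eqref{eq2}, when all functions are integral, reads $\sum_i\bigl(1+\min_{a\in Q_i}\langle x,a\rangle\bigr)\ge n-2$ on $\partial\Box$, which rearranges to $h_Q(-x)\le 2$ for all $x\in\partial\Box$, where $Q=\sum_i Q_i$ is the Minkowski sum and $h_Q$ its support function. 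Equivalently $Q\subseteq 2\Box^\vee$, where $\Box^\vee=\{a\in M^*\otimes\RR : \langle x,a\rangle\ge -1 \text{ for all } x\in\Box\}$ is the polar dual. Because lattice perimeter is additive under Minkowski sums and each positive-dimensional $Q_i$ contributes at least one primitive segment, one gets $q\le \tfrac12\,(\text{lattice perimeter of }Q)$; maximizing $q$ forces each $Q_i$ to be a segment, so the extremal $Q$ is a \emph{zonotope}, hence centrally symmetric. Thus $q$ is bounded by the largest half-lattice-perimeter of a centrally symmetric lattice polygon fitting (after translation) inside $2\Box^\vee$. The non-integral functions are incorporated either by running the same estimate with their ($3$ or fewer) contributions absorbed into the budget, or simply by adding the separate bound of the previous paragraph.

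\textbf{The crux and the main obstacle.} What remains is the lattice-geometric statement: for every two-dimensional lattice polygon $\Box$ with $0\in\Box^\circ$, the largest centrally symmetric lattice polygon contained (up to translation) in $2\Box^\vee$ has half-lattice-perimeter at most $8$, with equality exactly for the cross-polytope. I expect \emph{this} to be the main difficulty, and it is the reason Theorem~\ref{bound} cannot be applied naively: for asymmetric bases (for instance the reflexive triangle $\conv\{(1,0),(0,1),(-1,-1)\}$, whose symmetric part $\Box\cap(-\Box)$ contains no lattice basis) one has $c(\Box)\ge 16$ in every basis, yet the sharper lattice-perimeter bound gives at most $6$ functions. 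I would prove the crux by dualizing the inclusion $Z+t\subseteq 2\Box^\vee$ into a statement about the edge directions and lattice lengths of the symmetric polygon $Z$, relating $h_{\Box^\vee}$ back to the vertices of $\Box$ and using that $0\in\Box^\circ$ forces $\Box$ to contain enough primitive directions; checking that the square case $\Box^\vee=[-1,1]^2$ (i.e. $\Box$ the cross-polytope) is the unique maximizer then both yields the bound and pins down the extremal configuration, consistent with Example~\ref{ex:cross}. Combining the crux with the bound of at most three non-integral functions gives $n\le 8$, completing the proof.
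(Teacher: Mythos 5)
Your proposal takes a genuinely different route from the paper's proof (which proceeds by a case analysis on the base polytope: either $\Box$ contains a cross-polytope, in which case Theorem~\ref{bound} applies, or one reduces via Lemma~\ref{lemma:orth} to configurations handled by Lemma~\ref{lemma:lbound} and a delicate ad hoc estimate along the $e_2$-axis). Several of your ingredients are correct and attractive: sharpness via Example~\ref{ex:cross} is exactly the paper's argument; the observation that evaluating \eqref{eq1} at the origin together with Lemma~\ref{leq1/2} bounds the number of non-integral functions by $3$ in any dimension is valid and not in the paper; the encoding $\Psi_i'(x)=1+\min_{a\in Q_i}\langle x,a\rangle$ with $Q_i$ a lattice polytope in $M^*$ is correct for integral functions (height one forces integral gradients, and Lemma~\ref{linearalonglines} gives conicality); and your remark that Theorem~\ref{bound} cannot be applied naively because $c(\Box)\geq 16$ for asymmetric triangles correctly identifies why a finer argument is needed.

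However, there is a genuine gap: the entire weight of the theorem rests on your ``crux'' --- that any centrally symmetric lattice polygon contained (up to translation) in $2\Box^\vee$ has lattice perimeter at most $16$ --- and you do not prove it; you only indicate that you ``expect'' it to be the main difficulty and sketch a strategy. Without a proof of this statement the argument does not establish the bound $n\leq 8$ for a single nontrivial base. (For what it is worth, an averaging of support functions over the facet normals of $2\Box^\vee$ does verify it in your test cases, but the facet normals of $2\Box^\vee$ are the negatives of the vertices of $\Box$, which need not sum to zero, so the translation term does not cancel in general and the argument is not routine.) A second, independent gap is the combination of the two counts: ``simply adding the separate bound'' of at most $3$ non-integral functions to a bound of $8$ integral ones yields $n\leq 11$, not $8$, so you must run the mixed version in which the non-integral contributions are absorbed into the perimeter budget --- and that version is not worked out (the non-integral $\Psi_i'$ are not of the form $1+\min\langle x,a\rangle$ with integral $a$, so they do not fit the Minkowski-sum framework as stated). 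A minor further issue: the inclusion $Q\subseteq 2\Box^\vee$ constrains the actual Minkowski sum, which need not be centrally symmetric if some $Q_i$ is two-dimensional, so the reduction of the crux to \emph{centrally symmetric} polygons also needs justification. Until the crux and the mixed estimate are supplied, the proposal is an interesting reformulation rather than a proof.
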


Recall, Example~\ref{ex:cross} is a Fano CDP with 8 non-linear functions
supported on the two-dimensional cross polytope. This shows that the bound in the theorem is sharp.

To prove the remainder of the theorem we divide the set of base
polytopes into three cases depending on the lattice basis elements
they contain. The intuition, and some of the tools are best developed
through a few examples.

\begin{ex}\label{ex:tri}
We prove Theorem~\ref{thm:3dbound} in the  case of the triangular
  polytope $\Box$ pictured in  Figure~\ref{fig:tri}. Our convention
  throughout this section   is to indicate the origin by a dot.

Suppose that $\Psi$ is a normalized Fano CDP over $\Box$.  The dotted lines
indicate the unique maximal triangulation of $\Box$ using lattice
points. A region of linearity for any of the functions $\Psi'$
is necessarily a union of neighboring triangles in the triangulation. 

Taking $v$ to be the vector $(1,0)$, we have $\alpha_v=1/2$ and by Lemma \ref{lemma:bound} there are at most $8$
functions $\Psi_i'$ which are either non-integral, or non-linear along
the horizontal axis.  On the other hand, consider any function $\Psi_i'$ which is integral, and linear along the horizontal axis. By Lemma \ref{linearalonglines}, this function is linear along the segment from the origin to $(0,-1)$, which implies that it is linear on the union of the triangles $A$ and $B$ in the figure. By
concavity it is thus linear across all of $\Box$, but since $\Psi$ is normalized, it has no linear integral functions. 

Hence, $\Psi$ has at most $8$ functions.

\end{ex}
\begin{figure}
	\begin{tikzpicture}[scale=.5mm]
\draw[step=1cm,black,very thin] (-1,-1) grid (1,1);
\filldraw[black] (0,0) circle (2pt);
\draw[black, line width=2] (0,-1) -- (1,1) -- (-1,1)--(0,-1);
\draw[gray, dotted, line width=2] (0,0) -- (1,1);
\draw[gray, dotted, line width=2] (0,0) -- (-1,1);
\draw[gray, dotted, line width=2] (0,0) -- (0,1);
\draw[gray, dotted, line width=2] (0,0) -- (0,-1);
\node at (-.3,-.1) {$A$};
\node at (.3,-.1) {$B$};
\end{tikzpicture}
\caption{The base polytope in Example \ref{ex:tri} and its regions of linearity}\label{fig:tri}
\end{figure}
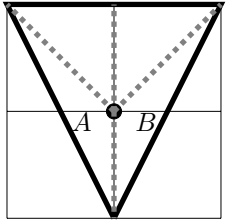

This example can be generalized to the following lemma:

\begin{lemma}\label{lemma:lbound}
Let $\Psi$ be a normalized Fano CDP with two-dimensional base $\Box\subset
M\otimes {\mathbb{R}}$. Consider any $v\in M$ such that the line $\langle v \rangle$ spanned by $v$ intersects the boundary of $\Box$ in a non-lattice point. Then $\Psi$ has at most $4/\alpha_v$ functions, where 
\[
	\alpha_v=\min\left\{1,\max\{\alpha\in \RR_{\geq 0}\ |\ \pm \alpha v \in \Box\}\right\}.
\]
\end{lemma}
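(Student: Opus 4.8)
The plan is to combine Lemma~\ref{lemma:bound} with the observation, generalizing Example~\ref{ex:tri}, that the non-lattice hypothesis forces any function which is both integral and linear along $\langle v\rangle$ to be linear on all of $\Box$. First I would reduce to the case that $v$ is primitive: replacing $v$ by the primitive lattice vector $v_0$ in its direction changes neither the line $\langle v\rangle$ nor its intersection with $\partial\Box$, and one checks directly that $\alpha_v\le\alpha_{v_0}$, so establishing the bound $4/\alpha_{v_0}$ yields the (weaker) claimed bound $4/\alpha_v$. With $v$ primitive, Lemma~\ref{lemma:bound} already bounds by $4/\alpha_v$ the number of functions that are non-integral or non-linear along $\langle v\rangle$. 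Thus it suffices to show that \emph{no} function $\Psi_i'$ is simultaneously integral and linear along $\langle v\rangle$.

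Suppose such a $\Psi_i'$ exists. By Lemma~\ref{lemma:rem} we have $\Psi_i'(0)=1$, so Lemma~\ref{linearalonglines} shows that every spoke $[0,w]$ with $w\in\partial\Box$ lies over a single facet of $\Gamma(\Psi_i')$; consequently every region of linearity of $\Psi_i'$ contains the origin in its closure. Let $p$ be the non-lattice point of $\langle v\rangle\cap\partial\Box$, lying in the relative interior of an edge $F$ of $\Box$, and let $P$ be the region of linearity containing the spoke $[0,p]$, with associated affine function $\ell$. Because the facet over $P$ is at height one and passes through $(0,1)$, a direct computation (solving $\langle x,u'\rangle+u_3\ell(x)=1$ at $x=0$) gives $u_3=1$ and $\ell(x)=\langle c,x\rangle+1$ with $c\in M^*$ integral. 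By concavity $\Psi_i'\le\ell$ on $\Box$, with equality precisely on $\overline{P}$ (the affine pieces being pairwise distinct). Linearity of $\Psi_i'$ along the whole line $\langle v\rangle$ then forces the entire chord $\langle v\rangle\cap\Box$ into $\overline{P}$.

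The heart of the argument, and the step I expect to be most delicate, is to upgrade this to $P=\Box$. The key point is that $p$ cannot be a vertex of the polyhedral complex $\Gamma(\Psi_i')$, since those vertices are integral while $p$ is not; hence a full neighborhood of $p$ in $\Box$ lies in $P$, so no bend locus reaches $p$. Together with $0\in\overline{P}$, either $0$ is interior to $P$, in which case the spokes sweep out $P=\Box$, or $P$ is a sector at the origin. Since $\overline{P}$ contains both opposite rays of $\langle v\rangle$, the cone of such a sector must be a half-plane or the whole plane; the latter again gives $P=\Box$. In the half-plane case, the boundary line $L$ of the half-plane passes through $0$, and a line through $0$ lying in a closed half-plane with $0$ on its boundary must equal $L$, so $\langle v\rangle=L$ is the bend locus; but then $p=\langle v\rangle\cap\partial\Box$ is an endpoint of this bend locus, hence a vertex of $\Gamma(\Psi_i')$, contradicting that $p$ is non-lattice. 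Therefore $P=\Box$, so $\Psi_i'$ is linear on $\Box$ with integral slope $c$, contradicting the normalization of Definition~\ref{def:normalized}. This rules out integral functions linear along $\langle v\rangle$, and the bound $n\le 4/\alpha_v$ follows. The main obstacle is making the sector analysis of the regions of linearity at the origin precise; the non-lattice condition is exactly what excludes the degenerate configuration in which $\langle v\rangle$ runs along a bend locus of $\Psi_i'$.
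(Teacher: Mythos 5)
Your proposal is correct and takes essentially the same route as the paper: Lemma~\ref{lemma:bound} handles the functions that are non-integral or non-linear along $\langle v\rangle$, and the non-lattice boundary point is used to show that any integral function linear along $\langle v\rangle$ must be globally linear, contradicting normalization. The paper phrases the key step as ``at most two regions of linearity, divided by exactly $\langle v\rangle$'' and then merges them using linearity of $\Psi_i'$ near the non-lattice point, whereas you run a sector analysis of the single region containing the chord; these are the same argument in slightly different clothing, and your explicit reduction to primitive $v$ is a small point the paper leaves implicit.
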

\begin{proof}
  By Lemma \ref{lemma:bound}, $\Psi$ has at most $4/\alpha_v$
  functions $\Psi_i'$ which are either non-integral, or non-linear
  along the line $\langle v \rangle$.  Consider instead an integral
  function $\Psi_i'$ which is linear along $\langle v \rangle$. Then
  by concavity and the fact that $\Psi_i'$ is linear along lines from
  the origin (Lemma \ref{linearalonglines}), $\Psi_i'$ has at most two
  regions of linearity, divided by exactly the line
  $\langle v \rangle$.

	We have assumed that this line intersects the boundary of
        $\Box$ in some point $y\notin M$, which thus cannot lie under
        a vertex of the graph of $\Psi_i'$ (since it has integral
        vertices). If $F$ is the facet of $\Box$ containing $y$ in its
        interior, it follows that $\Psi_i'$ must be linear along $F$
        in a neighborhood of $y$.
Since $\Psi_i'$ is also linear along lines from the origin, it is linear in a neighborhood of $y$. 
 But by the descriptions of the regions
        of linearity of $\Psi_i'$ in the preceding paragraph, it
        follows that $\Psi_i'$ is linear on all of $\Box$. But since $\Psi$ is normalized, it has no integral linear functions.
\end{proof}

Next we prove Theorem~\ref{thm:3dbound} for a second type of polytope.

\begin{ex}\label{ex:ep}
  Let $\Box$ be a base polytope which contains the gray polytope
  pictured in
  Figure~\ref{fig:ep} as a (possibly non-proper) subset, where the origin is the marked lattice point.

We claim that any normalized Fano CDP
  $\Psi$ supported on $\Box$ consists of at most $8$ functions.

There are two cases to consider depending on how the dotted line
spanned by $v=(2,1)$ intersects the polytope. If the intersection of
the dotted line spanned by $v$ with the boundary of $\Box$ contains
only lattice points, then $(-1,-2)$ must be in $\Box$ since that is
the lattice point closest to the origin that it will pass through. Taking a
lattice basis given by $(1,2)$ and $(0,1)$, Theorem~\ref{bound} yields
that $\Psi$ has at most $c(\Box)=8$ functions. 

If instead the line intersects the boundary at a non-lattice point, we
again use Lemma~\ref{lemma:lbound} to bound the number of functions by $4/\alpha_v$. By visual inspection, $\alpha_v\geq 1/2$,
hence $4/\alpha_v\leq 8$. 

In either case, the number of non-trivial functions is bounded by 8.
\end{ex}
\begin{figure}
	\begin{tikzpicture}[scale=.5mm]
\draw[gray,fill, line width=2] (1,2) -- (-1,0) -- (-1,-1) -- (0,-1) -- (1,2);
\draw[step=1cm,black,very thin] (-2,-2) grid (1,2);
\filldraw[black] (0,0) circle (2pt);
\draw[black, dotted, line width=2] (-1,-2) -- (1,2);
\end{tikzpicture}
\caption{The base polytope in Example~\ref{ex:ep}}\label{fig:ep}
\end{figure}
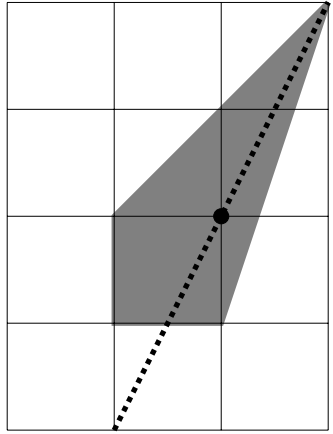

\begin{lemma}\label{lemma:orth}
	Let $\Box$ be a two-dimensional lattice polytope in $M\otimes \RR$ containing the origin in its interior. Then either $\Box$ is equivalent to the two-dimensional cross polytope, or there is a basis $e_1,e_2$ of $M$ such that $-e_1,-e_2\in \Box$, along with some $ae_1+be_2\in \Box$ for $a,b\in \ZZ_{\geq 0}$. 
\end{lemma}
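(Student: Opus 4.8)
The plan is to reduce the statement to a more symmetric assertion and then split into two cases according to whether $\Box$ contains a ``surrounding lattice triangle''. Writing $x=-e_1$, $y=-e_2$ and $z=ae_1+be_2$, finding the desired basis amounts to exhibiting lattice points $x,y,z\in\Box$ such that $\{x,y\}$ is a $\ZZ$-basis of $M$ and $z=-ax-by$ with $a,b\geq 0$; when $a,b\geq 1$ this is precisely the condition that the origin lie in the interior of $\conv(x,y,z)$. So I would aim to produce either such a triangle with a basis among two of its vertices, or to show directly that $\Box$ is the cross polytope. The possibility $b=0$, i.e.\ a point on a coordinate axis, will only be needed in a degenerate subcase, and is the reason the hypothesis reads $a,b\geq 0$ rather than $a,b\geq 1$.

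Two preliminary observations drive the argument. First, $\Box$ contains a lattice basis: among all pairs of linearly independent lattice points of $\Box$, choose $\{x,y\}$ minimizing $|\det(x,y)|$; since $0\in\Box^\circ$ the triangle $\conv(0,x,y)$ lies in $\Box$, and if $|\det(x,y)|\geq 2$ then by Pick's theorem it contains a further lattice point, which yields an independent pair of strictly smaller determinant --- a contradiction. Hence $\{x,y\}$ is a basis, and negating it produces a basis $e_1,e_2$ with $-e_1,-e_2\in\Box$. Second, because $0\in\Box^\circ$, the nonzero lattice points of $\Box$ positively span $M\otimes\RR$.

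The main case is when $\Box$ contains a lattice triangle with the origin in its interior. Here I would take such a triangle $T=\conv(v_1,v_2,v_3)$ of minimal area and show that it is \emph{clean}, i.e.\ $T\cap M=\{0,v_1,v_2,v_3\}$: given any other lattice point $w\in T$, the ray from $w$ through $0$ exits $T$ across an edge $\conv(v_i,v_j)$, and $\conv(w,v_i,v_j)$ is then a strictly smaller triangle still containing $0$ in its interior, contradicting minimality. The only obstruction is the collinear situation where this ray exits through a vertex, forcing $w$ to be a negative multiple of that vertex; this is ruled out by first replacing $w$ by the primitive vector in its direction. Once $T$ is clean, Pick's theorem gives $\operatorname{area}(T)=3/2$, so the three subtriangles $\conv(0,v_i,v_j)$ are unimodular and every pair $\{v_i,v_j\}$ is a basis; taking $-e_1=v_1$, $-e_2=v_2$ and $z=v_3$ finishes this case, even with $a,b\geq 1$.

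Finally, if $\Box$ contains no such triangle, then although the nonzero lattice points positively span, the origin can be expressed as a convex combination of them only through antipodal (collinear) configurations. I would combine this with Carath\'eodory/Steinitz to conclude that all nonzero lattice points of $\Box$ lie on two lines through the origin, each carrying a primitive antipodal pair, so that $\Box=\conv(qu,-pu,sv,-rv)$ for a basis $u,v$ and integers $p,q,r,s\geq 1$. The absence of off-axis lattice points then forces, in each of the four quadrants, one of the two adjacent semiaxes to have length one; if all four have length one then $\Box$ is the cross polytope, and otherwise $\Box$ is ``thin'', in which case the basis $e_1=u$, $e_2=v$ together with the axis point $q e_1\in\Box$ furnishes the second alternative. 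I expect the shrinking step in the main case, and the derivation of the two-line structure here, to be the technical heart of the proof.
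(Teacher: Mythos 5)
Your strategy (a minimal lattice triangle containing the origin in its interior, plus a separate analysis of the ``two-line'' configurations) is genuinely different from the paper's, which orders the rays through the nonzero lattice points of $\Box$ consecutively, uses that consecutive primitive generators form a basis, and analyzes where the antipodes of the first two rays land. Your preliminary observations and your second case look sound. However, there is a genuine gap in your main case: the minimal-area lattice triangle $T\subseteq\Box$ with $0\in T^\circ$ need \emph{not} be clean, and your proposed fix for the collinear obstruction does not work. Concretely, take $\Box=T=\conv\bigl((-1,-2),(1,-2),(0,1)\bigr)$. The origin is interior (barycentric coordinates $(1/6,1/6,2/3)$), all vertices are primitive, and the lattice points of $T$ are the three vertices together with $(0,0)$, $(0,-1)$, and $(0,-2)$. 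The extra points $(0,-1)$ and $(0,-2)$ are negative multiples of the vertex $(0,1)$, so the ray from either of them through the origin exits $T$ at that vertex; replacing such a point by the primitive vector in its direction yields $(0,-1)$, which is still a negative multiple of a vertex, so the obstruction persists. Moreover $T$ is the \emph{unique} lattice triangle in this $\Box$ with the origin in its interior (any such triangle must use the only lattice point with positive second coordinate and one lattice point on each side of the vertical axis), so it is certainly minimal, yet it is not clean: its area is $3$ rather than $3/2$, and the pair $\{(-1,-2),(1,-2)\}$ has determinant $4$ and is not a basis. Thus the key claim that every pair of vertices of the minimal triangle forms a basis is false as stated.

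The lemma of course still holds for this $\Box$: the pairs $\{(-1,-2),(0,1)\}$ and $\{(1,-2),(0,1)\}$ are bases, and taking $e_1=(1,2)$, $e_2=(0,-1)$ gives $-e_1,-e_2\in\Box$ with $(1,-2)=e_1+4e_2$. But your argument as written does not produce such a pair. To repair the main case you would need to treat separately the situation where $T$ contains $-\lambda v_k$ for some vertex $v_k$ and $\lambda\geq 1$, for instance by showing that the two sub-triangles $\conv(0,v_i,v_k)$ with $i\neq k$ are then forced to be unimodular; this extra case analysis is essentially what the paper's consecutive-ray argument accomplishes when the antipode of a ray lands on another ray rather than in the interior of a cone.
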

\begin{proof}
	Let $\rho_1,\ldots,\rho_m$ be the rays through all non-zero lattice points of $\Box$, ordered consecutively; let $v_i$ denote the primitive lattice generator of $\rho_i$. Then $v_i$ and $v_{i+1}$ form basis of $M$ for all $i$, see e.g. \cite[\S2.6]{fulton}. Here indices are taken modulo $m$. Furthermore, $v_i\in \Box$ for all $i$.

Now, $-\rho_1$ is contained in some cone $\sigma$ spanned by $\rho_i$ and $\rho_{i+1}$. If $-\rho_i$ is in the interior of this cone, we take $e_1=-v_i$, $e_2=-v_{i+1}$, implying that $v_1=ae_1+be_2$ for $a,b> 0$.

Suppose instead that $-\rho_1$ is on the boundary; without loss of generality $-\rho_1=\rho_i$. If $-\rho_2$ is in the interior of the cone $\sigma$, then as above taking $e_1=-v_i$, $e_2=-v_{i+1}$ implies that $v_2=ae_1+be_2$ for $a,b> 0$. If $-\rho_2$ is in the boundary of $\sigma$, then $-v_2=v_{i+1}$. By assumption we already had $-v_1=v_i$, so $\Box$ contains $\pm e_1,\pm e_2$, that is, the two-dimensional cross polytope. If $\Box$ contains any other lattice points, then clearly the above criterion is satisfied, otherwise $\Box$ is equivalent to the cross polytope.

Finally, if $-\rho_2$ is not in $\sigma$ at all, then $-\rho_{i+1}$ is in the interior of the cone generated by $\rho_1$ and $\rho_2$, so taking $e_1=-v_1$, $e_2=-v_{2}$  implies that $v_{i+1}=ae_1+be_2$ for $a,b> 0$.
\end{proof}

\begin{figure}
	\begin{tikzpicture}[scale=.5mm]
\draw[gray,fill, line width=2] (1,1) -- (-1,0)  -- (0,-1) -- (1,1);
\draw[step=1cm,black,very thin] (-2,-2) grid (2,2);
\filldraw[black] (0,0) circle (2pt);
\draw (1,0) node[cross=4pt,red] {};
\draw (0,1) node[cross=4pt,red] {};
\end{tikzpicture}
\caption{Case \ref{caseduo} in proof of Theorem \ref{thm:3dbound}}\label{fig:caseduo}
\end{figure}
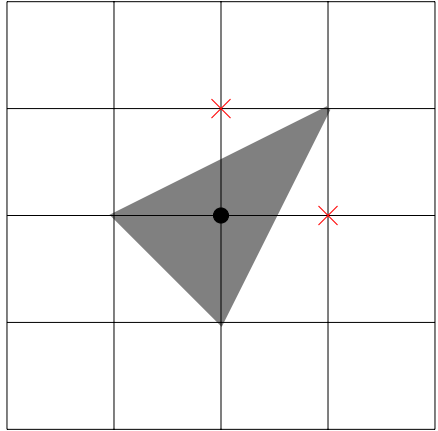
\begin{figure}
	\begin{tikzpicture}[scale=.5mm]
\draw[gray,fill, line width=2] (0,1) -- (-1,0)  -- (0,-1) -- (0,1);
\draw[step=1cm,black,very thin] (-2,-2) grid (2,2);
\filldraw[black] (0,0) circle (2pt);
\draw[black] (1,2) circle (2pt);
\draw[black, dotted, line width=2] (-.5,-2) -- (1.5,2);
\draw[black, dashed, line width=1,gray] (1,2) -- (0,-2);
\draw (1,0) node[cross=4pt,red] {};
\end{tikzpicture}
\caption{Case \ref{casetres} in proof of Theorem \ref{thm:3dbound}}\label{fig:casetres}
\end{figure}
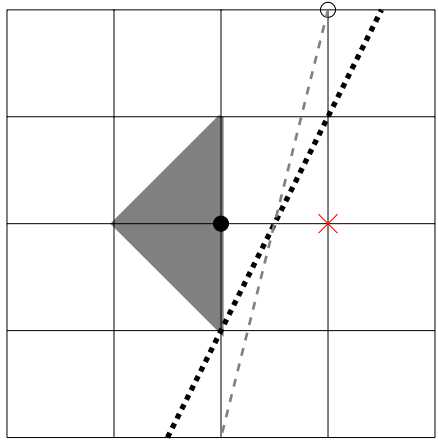

The proof of Theorem~\ref{thm:3dbound} follows the spirit of the above
examples. 

\begin{proof}[Proof of Theorem \ref{thm:3dbound}]
Let $\Psi$ consist of functions $\Psi_1,\ldots,\Psi_n$, which we may
assume to be non-linear or non-integral. There are three main
possibilities for the base polytope $\Box$.
  \begin{case}\label{caseuno}
$\Box$ contains a cross polytope. 
\end{case}
\noindent In this case, Theorem \ref{bound} directly implies that
$n\leq 8$. 

If, rather,  $\Box$ does not contain a cross polytope, Lemma
\ref{lemma:orth} implies that, we may choose a basis $e_1,e_2$ of $M$
such that $-e_1,-e_2\in \Box$, as well as some lattice point in the
interior of the positive orthant. Fix such a basis. There are only two
possibilities.
\begin{case}\label{caseduo}
Neither $e_1$ nor $e_2$ is in $\Box$.
\end{case}
\noindent Figure \ref{fig:caseduo} depicts this case, where the gray
region is contained in $\Box$, but the marked crosses in the figure
are not. Since $\Box$ contains a point in the interior of the positive
orthant, in fact the point $e_1+e_2$ must be in $\Box$. But then $\pm
e_1/2\in \Box$, and the line spanned by $e_1$ intersects the boundary
of $\Box$ in a non-lattice point, so Lemma \ref{lemma:lbound} implies
that $n\leq 8$ as desired.

\begin{case}\label{casetres}
The point $e_2$ is in $\Box$, but $e_1$ is not.
\end{case}
\noindent Since the argument of Case \ref{caseduo} applies if $e_1+e_2\in \Box$, we may thus assume that every $(x_1,x_2)\in \Box$ located in the positive orthant satisfies $x_2>2x_1-1$, see the black dotted line in Figure \ref{fig:casetres}.	
If $e_2$ is on the boundary of $\Box$, then the only lattice point of $\Box$ in the positive orthant must be $e_1+2e_2$. But then every lattice point $(x_1,x_2)$ of $\Box$ must satisfy $x_2> 4x_1-2$, otherwise $e_1/2\in \Box$ and again the argument of Case \ref{caseduo} applies, see the dashed gray line in Figure \ref{fig:casetres}.
It follows that either $\Box$ is the convex hull of $e_1+2e_2$, $-e_1$, and $-e_2$, or, that $\Box$ contains the gray region pictured in Figure \ref{fig:ep}. In the former situation, $\Box$ is equivalent to the polytope of Example \ref{ex:tri}, guaranteeing $n\leq 8$. In the latter situation, Example \ref{ex:ep} also guarantees $n\leq 8$.
	
We may thus continue with Case \ref{casetres}, assuming that $e_2$ is in the interior of $\Box$.
Let $R$ be the set of those $i$ such that $\Psi'_i$ is non-integral, or non-linear along the $e_2$-axis, with $r=|R|$. By Lemma \ref{lemma:bound}, we have $r\leq 4$. 

For any $\Psi'_i$ with $i\notin R$, the concavity of $\Psi'_i$ and linearity along the $e_2$-axis implies that $\Psi'_i$ is has exactly two domains of linearity, divided by the $e_2$-axis. For the moment, assume that $R\neq \emptyset$. For all $i\notin R$ we normalize $\Psi$ so that $\Psi'_i\equiv 1$ on the region of those points $(x_1,x_2)$ with  $x_1\leq 0$.
Consider $y=\lambda e_2$ for $\lambda$ maximal such that $y\in \Box$. By our assumptions above, $\lambda>1$, which implies that it lies on a facet of $\Box$ which is not at height one. We thus have that $n-2=\sum \Psi'_i(y)=(n-r)+ \sum_{i\in R} \Psi'_i(y)$, or equivalently that
\begin{equation}\label{eqn:y}
\sum_{i\in R} \Psi'_i(y)=r-2.
\end{equation}

Likewise, consider some $z\in \Box$ with first coordinate equal to $1$. Our normalization implies that for $i\notin R$, $\Psi'_i(z)\leq 0$. 
Set $z'=\frac{z-e_1}{2}$. This can be written as $sy+(1-s)0$ for some $s\in(0,1)$. 
See Figure \ref{fig:casetresa} for a depiction of $y$, $z$, and $z'$.
\begin{figure}
	\begin{tikzpicture}[scale=.5mm]
\filldraw[black] (0,0) circle (2pt);
\draw[black] (-1,0) circle (1pt);
\draw[black] (0,1.25) circle (1pt);
\draw[black] (0,1) circle (1pt);
\draw[black] (0,2.15) circle (1pt);
\draw[black] (1,2.5) circle (1pt);
\draw[black, dotted, line width=2] (0,-1) -- (2,3);
\draw[gray, line width=1] (-1,0) -- (1,2.5);
\draw[gray, line width=1] (0,0) -- (0,2.15);
\node [above left] at (-1,0)  {$-e_1$};
\node [right] at (1,2.5)  {$z$};
\node [right] at (0,1.25)  {$z'$};
\node [above left] at (0,2.15)  {$y$};
\node [below right] at (0,1)  {$e_2$};

\end{tikzpicture}
\caption{Case \ref{casetres} with $e_2\in\Box^\circ$ in proof of Theorem \ref{thm:3dbound}}\label{fig:casetresa}
\end{figure}
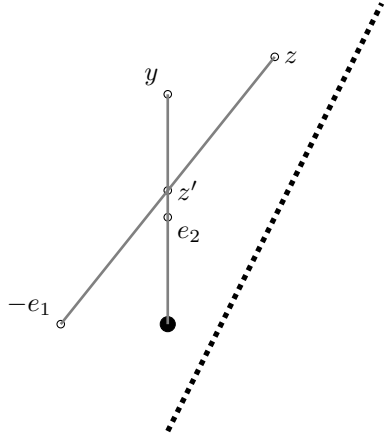

For $i\in R$, $\Psi_i'$ restricted to the $e_2$ axis is bounded above by the linear function taking values $1$ at the origin, and $\Psi'_i(y)$ at $y$.
Indeed, if $\Psi'_i$ is integral, this follows from Lemma \ref{linearalonglines}. If $\Psi'_i$ is not integral, the fact that each facet of the graph of $\Psi'_i$ (and hence its restriction to the $e_2$-axis) is at height one implies the bound on the domain of linearity closest to $y$; concavity implies the bound holds for the entire axis.

We thus obtain
\begin{align*}
	2n-4\leq \sum \Psi'_i(z)+\sum \Psi'_i(e_1)\leq \sum_{i\notin R} 1+2\sum_{i\in R} \Psi'_i(z')\\
	\leq (n-r)+2\sum_{i\in R} s\Psi'_i(y)+(1-s)=n+r-4s,
\end{align*}
where the first inequality follows from Equation \eqref{eq2}, the second from the concavity of $\Psi_i'$, the third from the bound on $\Psi_i'$ along the $e_2$ axis, and the final equality follows from Equation \eqref{eqn:y}. This in turn implies
\[
n\leq 4+r-4s. 
\]
Using that $r\leq 4$ and $s>0$, we obtain $n\leq 7$.

If instead $R=\emptyset$, then we can only normalize all but one of the functions $\Psi_i'$ not in $R$; say that $\Psi_1'$ is the function we don't normalize. Then as in Equation \eqref{eqn:y} we obtain $\Psi_i'(y)=-1$ with $y$ as above, and arguing as above (with $R$ replaced by $\{1\}$) we obtain
$2n-4\leq (n-1)-2s+2(1-s)=n+1-4s$. Hence, $n\leq4$.

We have shown that in all cases, $n\leq 8$. On the other hand, Example \ref{ex:cross} shows that this bound is sharp, as the two-dimensional cross polytope supports a Fano CDP with $8$ non-linear functions.
\end{proof}

Based on our results for two- and three-dimensional Fano CDPs, we conjecture the following:
\begin{conj}\label{conj:bound}
Any normalized Fano CDP of dimension $d$ has at most $2^d$ functions.
\end{conj}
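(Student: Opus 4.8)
The plan is to prove the upper bound $n \le 2^d$ by induction on the dimension $d$ of the CDP (whose base $\Box$ has dimension $d-1$), taking the two-dimensional classification (Theorem~\ref{thm:2d}, giving $n\le 4=2^2$) and Theorem~\ref{thm:3dbound} (giving $n\le 8 = 2^3$) as the base cases. Since the conjecture asserts only an upper bound, no extremal examples are needed in high dimension; indeed the cross polytope (Example~\ref{ex:cross}) supplies only $4(d-1)$ functions, and $4(d-1)\le 2^d$ for all $d\ge 2$, so the exponential bound is not expected to be sharp once $d\ge 4$. The exponential target should instead be read as a "budget" to be split between a distinguished direction and a codimension-one section.

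The first reduction is to dispose of \emph{fat} bases. If $\Box$ contains a full cross polytope $\pm e_1,\dots,\pm e_{d-1}$ for some lattice basis, then $\alpha_{e_j}=1$ for each $j$, so $c(\Box)=\sum_{j\le d-1} 4/\alpha_{e_j}=4(d-1)\le 2^d$, and Theorem~\ref{bound} finishes the case immediately. Thus the whole difficulty is concentrated in bases that are thin in some direction, and the problem becomes an effective generalization of the case analysis performed for $d=3$ in the proof of Theorem~\ref{thm:3dbound}.

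For the inductive step I would fix a lattice hyperplane $H=\langle e_1,\dots,e_{d-2}\rangle$ through the origin and restrict the normalized translated functions $\Psi_i'$ to the $(d-2)$-dimensional base $\Box\cap H$. The restriction of a concave, height-one, piecewise-affine function is again concave and at height one; the origin remains interior to $\Box\cap H$; and the boundary inequality~\eqref{eq2} is inherited because $\partial(\Box\cap H)=\partial\Box\cap H$. Hence the restricted tuple is a Fano CDP of dimension $d-1$ with the same constants $a_i$, and after normalizing it the inductive hypothesis bounds by $2^{d-1}$ the number of functions that remain non-trivial on $H$. It then remains to bound the functions that are non-trivial on $\Box$ but become integral and linear on restriction to $H$; the hope is that, after choosing $H$ so that $\Box$ is thin transverse to it, Lemma~\ref{lemma:bound} applied to the transverse direction, together with the facet relation~\eqref{eq3}, controls these by a further $2^{d-1}$, yielding $n\le 2^{d-1}+2^{d-1}=2^d$.

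The main obstacle, and the reason this remains a conjecture, is precisely the control of this second group of functions. A function may be non-linear in a direction oblique to both $H$ and its complementary axis, so that it is trivial on $H$ yet is \emph{not} detected by the single transverse count $4/\alpha_{e_{d-1}}$; consequently the clean split of the $2^d$ budget into two halves can fail. Overcoming this will require a higher-dimensional analogue of the refined argument of Case~\ref{casetres} (the collinear triple $y,z,z'$ combined with the height-one bound along an axis and the facet relation~\eqref{eq3}), together with a structural classification of thin lattice polytopes generalizing Lemma~\ref{lemma:orth}. A self-contained such classification in dimension $d-1\ge 3$ appears to be the principal missing ingredient; an alternative would be to replace the hyperplane section by a projection and to understand directly how the divisorial-polytope structure degenerates under it.
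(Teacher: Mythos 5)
The statement you are addressing is labelled a \emph{conjecture} in the paper, and the paper gives no proof of it: the authors establish only the cases $d=2$ (Theorem~\ref{thm:2d}) and $d=3$ (Theorem~\ref{thm:3dbound}) and exhibit the cross-polytope example. Your submission is likewise not a proof but a plan with a self-identified gap, and your own diagnosis is accurate: the step that fails is the control of functions that are non-trivial on $\Box$ but become integral and linear after restriction to the hyperplane $H$ --- these are discarded when you re-normalize the restricted tuple before invoking the inductive hypothesis, so they are exactly the functions the induction does not see, and nothing in the paper's machinery (Lemma~\ref{lemma:bound} in a single transverse direction, or the facet relation~\eqref{eq3}) bounds them by $2^{d-1}$ in general. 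Your preliminary reductions are sound: the fat-base case via Theorem~\ref{bound} and the inequality $4(d-1)\leq 2^d$ is correct, and you are right that Example~\ref{ex:cross} only attains the bound for $d\leq 3$, so the conjectural bound is not expected to be sharp for $d\geq 4$.

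One additional, unacknowledged problem with your inductive step: the restriction of a normalized Fano CDP to a lattice hyperplane $H$ need not be a CDP at all, because the vertices of the graph of $\Psi_i'|_{H}$ are intersections of faces of $\Gamma(\Psi_i')$ with $H\times\mathbb{R}$, and these need not lie in the sublattice $(H\cap M)\times \ZZ$. Concavity and the height-one property do restrict to $H$, but integrality of vertices does not, so the inductive hypothesis cannot be applied to the restricted tuple as stated; one would need either to verify integrality of the section (false in general) or to reformulate the induction for a weaker class of objects. This, together with the budget-splitting issue you already flag, is why the statement remains open; a correct treatment would likely need a higher-dimensional analogue of Lemma~\ref{lemma:orth} and of the collinear-point argument in Case~\ref{casetres}, as you suggest.
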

\noindent As Example \ref{ex:cross} shows, there is a Fano CDP supported on the $(d-1)$-dimensional cross polytope which achieves this conjectural bound.

\subsection*{Acknowledgements} 
We thank Hendrik S\"u\ss{} for useful discussions. All three authors
were supported by the NSERC Discovery Grant Program, and CT was
also supported by an NSERC USRA Fellowship.

\bibliographystyle{alpha}

\bibliography{cdp} 

\end{document}